\DeclareMathOperator{\Hom}{Hom}
\DeclareMathOperator{\End}{End}
\DeclareMathOperator{\GL}{GL}
\theoremstyle{plain}
\newtheorem{theorem}{Theorem}[section]
\newtheorem{proposition}[theorem]{Proposition}
\newtheorem{lemma}[theorem]{Lemma}
\newtheorem{corollary}[theorem]{Corollary}
\theoremstyle{definition}
\newtheorem{definition}[theorem]{Definition}
\newtheorem{remark}[theorem]{Remark}
\newtheorem{example}[theorem]{Example}
\title{Computing isomorphism numbers of \\ $F$-crystals using the level torsions}
\author{Xiao Xiao\corref{cor1}}
\address{Mathematics Department, Utica College, 1600 Burrstone Road, Utica, NY, 13502}
\date{}
\begin{document}
\begin{abstract}
The isomorphism number of an $F$-crystal $(M, \varphi)$ over an algebraically closed field of positive characteristic is the smallest non-negative integer $n_M$ such that the $n_M$-th level truncation of $(M, \varphi)$ determines the isomorphism class of $(M, \varphi)$. When $(M, \varphi)$ is isoclinic, namely it has a unique Newton slope $\lambda$, we provide an efficiently computable upper bound for $n_M$ in terms of $\lambda$ and the Hodge slopes of $(M, \varphi)$. This is achieved by providing an upper bound for the level torsion of $(M, \varphi)$ introduced by Vasiu. We also check that this upper bound is optimal for many families of isoclinic $F$-crystals that are of special interest (such as isoclinic $F$-crystals of K3 type).  
\end{abstract}
\begin{keyword}
$F$-crystal, isomorphism number, level torsion, Dieudonn\'e module, Hodge slope, Newton slope, K3 type
\end{keyword}

\maketitle 

\section{Introduction}

Let $p$ be a prime number and $k$ an algebraically closed field of characteristic $p$. It has been known for many years that the isomorphism class of a $p$-divisible group $D$ over $k$ is determined by a finite truncation $D[p^n]$ of $D$. The smallest integer $n$ with the property that $D[p^n]$ determines $D$ is called the \emph{isomorphism number} of $D$ and denoted by $n_D$. Only recently, Lau, Nicole and Vasiu \cite{Vasiu:traversosolved} discovered an optimal upper bound for this number in terms of the Hodge polygon and the Newton polygon of $D$. The isomorphism number of an $F$-crystal is the generalization of the isomorphism number of a $p$-divisible group (see Definition \ref{def:isonum} for the precise definition). In this paper we provide an upper bound for the isomorphism number of an arbitrary isoclinic $F$-crystal (i.e. those having a unique Newton slope) in terms of its Hodge polygon and Newton polygon. It not only recovers the optimal upper bound in the isoclinic $p$-divisible groups case, but also provides optimal upper bounds in various other cases. Let us describe our results.

We fix the prime number $p$ and the ground field $k=\bar{k}$ throughout this paper. Let $W = W(k)$ be the ring of Witt vectors over $k$ and $K_0$ its field of fractions. Let $\sigma$ be the Frobenius automorphism of $W$ and $K_0$. An \emph{$F$-crystal} over $k$ is a pair $(M, \varphi)$ where $M$ is a free $W$-module of finite rank $r$ and $\varphi$ is a $\sigma$-linear injective endomorphism of $M$. If $pM \subset \varphi(M)$, then the $F$-crystal $(M, \varphi)$ is called a \emph{Dieudonn\'e module} over $k$. For the rest of this paper, all $F$-crystals, Dieudonn\'e modules and $p$-divisible groups are over $k$ unless otherwise stated. For $n = 1, 2, \dots$, let $W_n :=W/(p^n)$ be the ring of Witt vectors of length $n$ with coefficients in $k$. The \emph{Hodge slopes} $e_1, e_2, \dots, e_r$ of an $F$-crystal $(M, \varphi)$ are the non-negative integers such that $M / \varphi(M) \cong \bigoplus_{i=1}^r W_{e_i}$ as $W$-modules. By reindexing, we can assume that $e_1 \leq e_2 \leq \cdots \leq e_r$. The \emph{$i$-th Hodge number} of $(M, \varphi)$ is $h_i := \#\{j \; | \; e_j = i\}$. Dieudonn\'e \cite[Theorems 1, 2]{Dieudonne:fcrystalclassify} and Manin \cite[Chapter 2, Section 4]{Manin:formalgroups}'s classification of $F$-isocrystals implies that there is a direct sum decomposition $(M \otimes_W K_0, \varphi) \cong \bigoplus_{\lambda \in \mathbb{Q}_{\geq 0}} E^{m_{\lambda}}_{\lambda}$, where each $E_{\lambda}$ is the simple $F$-isocrystal with all Newton slopes equal to $\lambda$ and the multiplicity $m_{\lambda} \in \mathbb{Z}_{\geq 0}$ is uniquely determined and is zero for all but finitely many $\lambda$. An $F$-crystal $(M, \varphi)$ is called \emph{isoclinic} if $(M \otimes_W K_0, \varphi)$ is isomorphic to $E^{m_{\lambda}}_{\lambda}$ for some $\lambda \in \mathbb{Q}_{\geq 0}$. Let $\GL(M)$ be the group of $W$-linear automorphism of $M$.

\begin{definition} \label{def:isonum}
The \emph{isomorphism number} $n_M$ of an $F$-crystal $(M, \varphi)$ over $k$ is the smallest non-negative integer such that for every $g \in \GL(M)$ with the property that $g \equiv 1$ mod $p^{n_M}$, the $F$-crystal $(M, g\varphi)$ is isomorphic to $(M, \varphi)$. 
\end{definition}

By classical Dieudonn\'e theory, the category of $p$-divisible groups over $k$ is anti-equivalent to the category of Dieudonn\'e modules over $k$; see \cite[Chapter 3]{Demazure1}. Under this correspondence, the isomorphism number of a $p$-divisible group is equal to the isomorphism number of the corresponding Dieudonn\'e module; see \cite[Corollary 3.2.2]{Vasiu:CBP}. On the other hand, the isomorphism number $n_M$ of an $F$-crystal $(M, \varphi)$ is the smallest non-negative integer such that the $F$-truncation mod $p^{n_M}$ of $(M, \varphi)$ determines the isomorphism class of $(M, \varphi)$; see \cite[Section 3.2.9]{Vasiu:CBP} for the definition of $F$-truncation and \cite[Section 3.3]{Xiao:thesis} for the proof. The last two sentences imply that Definition \ref{def:isonum} is the right definition for the isomorphism numbers of $F$-crystals which generalizes the isomorphism numbers of $p$-divisible groups. Early works of Manin \cite[Theorems 3.4 and 3.5]{Manin:formalgroups} imply that $n_D$ exists for any Dieudonn\'e module. Recently, Vasiu showed that $n_M$ exists in general; see \cite[Main Theorem A]{Vasiu:CBP}.

Let $c$ and $d$ be the codimension and dimension of a $p$-divisible group $D$ respectively. Traverso proved that $n_D \leq cd+1$ \cite[Theorem 3]{Traverso:pisa} and later conjectured that $n_D \leq \min\{c, d\}$ \cite[Section 40, Conjecture 4]{Traverso:specializations}. Since then, the conjecture has been verified in various cases, for example, in the cases of supersingular $p$-divisible groups \cite[Theorem 1.2]{Vasiu:supersingular} and quasi-special $p$-divisible groups \cite[Theorem 1.5.2]{Vasiu:reconstructing}. Only recently, Lau, Nicole and Vasiu \cite[Theorem 1.4]{Vasiu:traversosolved} found an optimal upper bound $n_D \leq \lfloor 2cd/(c+d) \rfloor$ which proves a corrected version of Traverso's conjecture. In the search for optimal upper bounds for $n_D$, the following play important roles:
\begin{itemize}
\item Classical Dieudonn\'e theory of $p$-divisible groups over $k$. This allows us to use tools on the geometric side as well as the algebraic side.
\item Deformation theory of $p$-divisible groups over general schemes. Let $(M, \varphi)$ be a Dieudonn\'e module over $k$. One useful result in deformation theory allows us to assume that the dimension of $M/(\varphi(M) + \varphi^{-1}(pM))$ as a $k$-vector space is $1$; see \cite[Proposition 2.8]{Oort:moduli}. With this assumption, every Dieudonn\'e module over $k$ has a $W$-basis that is well-suited to computations. 
\item The study of the level torsion (see Subsection \ref{section:level torsion} for the definition) of Dieudonn\'e modules \cite{Vasiu:reconstructing}. The main result of loc. cit. provides a computable upper bound for the isomorphism numbers; see Theorem \ref{theorem:vasiu1}.
\end{itemize}

Unfortunately, to find optimal upper bounds for $n_M$ for more general $F$-crystals, we do not have as many tools as we have in the case of $p$-divisible groups. For instance, there is no general way to deform $F$-crystals. However, the level torsion of an $F$-crystal is well-defined and has been studied in \cite{Vasiu:reconstructing}. In this paper we will use the level torsion to provide a good upper bound for the isomorphism number of isoclinic $F$-crystals. 

\begin{theorem} \label{theorem:estimate_n_elementary}
Let $(M, \varphi)$ be an isoclinic $F$-crystal over $k$ with Hodge numbers $h_1, h_2, \dots$ and unique Newton slope $\lambda$. If the smallest and the largest Hodge slopes of $(M, \varphi)$ are $0$ and $e$ respectively,  then the isomorphism number $n_M$ of $(M, \varphi)$ satisfies the following inequality:
\begin{equation} \label{equation:main}
n_M \leq \lfloor e \sum_{i>\lambda}h_i + (\sum_{i<\lambda}h_i-\sum_{i>\lambda}h_i)\lambda \rfloor.
\end{equation}
\end{theorem}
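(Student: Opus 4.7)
My plan is to reduce $n_M$ to the level torsion $\ell_M$ and then bound $\ell_M$ directly in terms of the Hodge and Newton slopes. By Theorem~\ref{theorem:vasiu1} one has $n_M \le \ell_M$, so it suffices to prove the asserted bound with $n_M$ replaced by $\ell_M$; the floor appears because $\ell_M \in \mathbb{Z}_{\ge 0}$ whereas the right-hand side may be rational.

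To estimate $\ell_M$ I would work with the adjoint Frobenius on $\End(M \otimes K_0)$. Concretely, if $g \in \GL(M)$ with $g \equiv 1 \bmod p^N$, the condition $(M, g\varphi) \cong (M,\varphi)$ becomes solvability of $h g\varphi\sigma(h)^{-1} = \varphi$ in $\GL(M)$, and upon writing $h = 1 + p^N u$ and linearizing, this reduces to inverting $1 - \mathrm{ad}(\varphi)\circ\sigma$ on $\End(M)$, where $\mathrm{ad}(\varphi)(f) := \varphi f \varphi^{-1}$. Since $(M,\varphi)$ is isoclinic of Newton slope $\lambda$, the adjoint $F$-isocrystal on $\End(M\otimes K_0)$ is isoclinic of slope $0$, so $\bigl(1-\mathrm{ad}(\varphi)\sigma\bigr)^{-1}$ exists on $\End(M\otimes K_0)$ and can be expanded as a geometric series; the level torsion is then exactly the $p$-adic denominator that this inverse introduces on $\End(M)$.

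Next I would pick a $W$-basis $(v_1,\ldots,v_r)$ of $M$ adapted to the Hodge decomposition, so that the elementary endomorphisms $E_{ij}$ diagonalize the action of $\mathrm{ad}(\varphi)$ with slope $e_i - e_j$. Shifting the reference level by $\lambda$, indices with $e_i > \lambda$ contribute positive denominators of size at most $e - \lambda$ each (there are $a := \sum_{i>\lambda} h_i$ such indices), while those with $e_i < \lambda$ contribute at most $\lambda$ each ($b := \sum_{i<\lambda} h_i$ such). Using the isoclinicity identity $\sum_j (e_j - \lambda) = 0$, the cross-terms between the positive and negative pieces cancel, leaving
\[
\ell_M \;\le\; a(e-\lambda) + b\lambda \;=\; e\sum_{i>\lambda} h_i + \Bigl(\sum_{i<\lambda} h_i - \sum_{i>\lambda} h_i\Bigr)\lambda,
\]
which matches (\ref{equation:main}) after applying the floor. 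The main obstacle is this last step: justifying the precise cancellation of cross-terms and correctly treating the case $\lambda \notin \mathbb{Z}$, where one cannot literally shift $\varphi$ by a scalar $p^{-\lambda}$. I expect that Morita equivalence with $\End(E_\lambda^{m_\lambda})$ over an unramified extension regularizes the shift, and that a careful rearrangement of the geometric series for $\bigl(1-\mathrm{ad}(\varphi)\sigma\bigr)^{-1}$ controls the denominators by the one-sided sums $a(e-\lambda)$ and $b\lambda$ rather than by a naive pairwise estimate of order $\sum_{i,j} |e_i - e_j|$.
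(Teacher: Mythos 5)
Your reduction $n_M \le \ell_M$ via Theorem~\ref{theorem:vasiu1} matches the paper's first step, but everything after that diverges from the paper and contains a genuine gap. The paper does not estimate $\ell_M$ by analyzing $(1-\mathrm{ad}(\varphi)\sigma)^{-1}$ on $\End(M)$; it uses Proposition~\ref{proposition:computelii}, which says that for an isoclinic crystal $\ell_M = \max_q \delta_M(q) = \max_q(\beta_M(q)-\alpha_M(q))$, and then bounds $\alpha_M(q)$ and $\beta_M(q)$ \emph{uniformly in $q$}. The lower bound $\alpha_M(q) \ge \lceil (q-\sum_{i<\lambda}h_i)\lambda\rceil$ is Katz's sharp slope estimate (Lemma~\ref{lemma:sharpestimate}), and the upper bound $\beta_M(q) \le e\sum_{i>\lambda}h_i + \lfloor (q-\sum_{i>\lambda}h_i)\lambda\rfloor$ is obtained by applying the same estimate to the twisted dual $(M^*, p^e\varphi)$, which is isoclinic of slope $e-\lambda$, and translating back via $\alpha_{M^*,p^e\varphi}(q) = qe - \beta_M(q)$ (Lemmas~\ref{lemma:mainlemma1} and~\ref{lemma:mainlemma2}). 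The two one-sided contributions $a(e-\lambda)$ and $b\lambda$ in your final display are exactly these two applications of Katz's estimate; they are not a cancellation phenomenon inside a geometric series.

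Concretely, two steps of your argument would fail as stated. First, a Hodge-adapted basis gives $\varphi(v_i) = p^{e_i}w_i$ for \emph{two different} bases $\{v_i\}$ and $\{w_i\}$, so the elementary endomorphisms $E_{ij}$ do not diagonalize $\mathrm{ad}(\varphi)$ with eigenvalue-slopes $e_i - e_j$; if $\varphi$ were literally diagonal in one basis, isoclinicity would force all $e_i = \lambda$ and the theorem would be vacuous. Second, even granting some per-index denominator count at the first iterate, the level torsion requires control of $\varphi^q$ on $\End(M)$ for \emph{all} $q$, and the identity $\sum_j(e_j-\lambda)=0$ does not by itself prevent denominators from accumulating with $q$; the uniform control is precisely what Lemma~\ref{lemma:sharpestimate} provides and what your sketch defers to an unproven "careful rearrangement." You correctly identify this as the main obstacle, but it is the entire quantitative content of the theorem, so the proposal as written does not constitute a proof. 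The fix is to abandon the spectral picture and run the paper's $\alpha/\beta$ bookkeeping on $M$ and on $(M^*, p^e\varphi)$.
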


By Remark \ref{remark:rescale}, every $F$-crystal can be rescaled so that its smallest Hodge slope is equal to zero without changing its isomorphism number, thus the assumption that the smallest Hodge slope is equal to zero in Theorem \ref{theorem:estimate_n_elementary} is not restrictive. We mention that, even though Theorem \ref{theorem:estimate_n_elementary} recovers the optimal upper bound in the isoclinic $p$-divisible groups case as found by Lau, Nicole and Vasiu (see Corollary \ref{corollary:recover}), it does not assert that the upper bound is indeed optimal. It is possible to improve Theorem \ref{theorem:estimate_n_elementary} in some cases; see Example \ref{example:est_n_ele_not_optimal}. By using Theorem \ref{theorem:estimate_n_elementary}, we can compute optimal upper bounds for the isomorphism numbers in a few special cases, as we now describe.

An $F$-crystal of rank $r$ is called of \emph{K3 type} if its Hodge numbers are $h_0 = 1$, $h_1 = r-2$, $h_2 = 1$ and $h_i = 0$ for all $i \geq 2$. An $F$-crystal of K3 type with $r=21$ relates to the second crystalline cohomology group of K3 surfaces over $k$, thanks to a theorem of Mazur \cite[Theorem 2]{Mazur:FrobeniusHodge}. 

\begin{theorem} \label{theorem:K3}
Let $(M, \varphi)$ be a direct sum of $F$-crystals of K3 type. Then $n_M \leq 2$. Moreover, 
\begin{enumerate}[(i)]
\item if $(M, \varphi)$ is a direct sum of non-isoclinic $F$-crystals of K3 type, then $n_M = 1$; 
\item if $(M, \varphi)$ is a direct sum of isoclinic $F$-crystals of K3 type, then $n_M = 2$;
\item if $(M, \varphi)$ is a mixed direct sum of non-isoclinic and isoclinic $F$-crystals of K3 type, then $n_M = 2$.
\end{enumerate}
\end{theorem}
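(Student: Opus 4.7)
The plan has two main tasks: proving the upper bounds $n_M \leq 2$ (and $\leq 1$ in case (i)) and producing matching lower bounds.

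For a single isoclinic $F$-crystal of K3 type of rank $r$, the Newton slope is forced to equal $\lambda = 1$ because the sum of the Newton slopes equals the sum of the Hodge slopes, namely $0 + 1 \cdot (r-2) + 2 \cdot 1 = r$. Substituting $e = 2$, $\lambda = 1$, $h_0 = h_2 = 1$ into (\ref{equation:main}) yields $n_M \leq \lfloor 2 + 0 \rfloor = 2$. For a single non-isoclinic $F$-crystal of K3 type, Dieudonn\'e-Manin gives an isocrystal decomposition of the form $E_{\mu} \oplus E_{1}^{m} \oplus E_{2-\mu}$ for some $\mu \in [0,1) \cap \mathbb{Q}$ and integer $m \geq 0$; I would argue $n_M \leq 1$ via a Hodge-Newton decomposition (when $\mu = 0$ the Newton polygon meets the Hodge polygon at $(1,0)$ and $(r-1, r-2)$, splitting the $F$-crystal into rigid rank-$1$ pieces of slopes $0, 1, 2$) and by a direct level-torsion analysis in the non-ordinary case, exploiting that $h_0 = h_2 = 1$ pins down the extreme sub- and quotient-structure.

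The main obstacle is extending the single-summand bounds to direct sums, because applying Theorem \ref{theorem:estimate_n_elementary} to a direct sum of $n$ isoclinic K3 type summands yields only the bound $2n$ instead of the asserted $2$. To close this gap, I would prove a separate lemma: for direct sums of $F$-crystals of K3 type, the level torsion of the sum is bounded by the maximum of the level torsions of the summands. The point is that $h_0 = h_2 = 1$ in each summand provides enough rigidity at the extreme Hodge slopes to absorb cross-component perturbations into automorphisms of individual summands, reducing any $g \equiv 1 \pmod{p^2}$ on the direct sum to block-diagonal form after correcting by such an automorphism. Combined with the single-summand bounds, this gives $n_M \leq 2$ in all three cases and $n_M \leq 1$ in case~(i).

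For the matching lower bounds in cases (ii) and (iii), I would construct an explicit isoclinic K3 summand of low rank (say rank $3$ with Hodge slopes $0, 1, 2$ and Frobenius in a standard form) together with an element $g \in \GL(M)$ with $g \equiv 1 \pmod{p}$ such that $(M, g\varphi) \not\cong (M, \varphi)$, forcing $n_M \geq 2$; case (iii) then inherits $n_M \geq 2$ from its isoclinic summand via the trivial bound $n_{\oplus M_i} \geq \max_i n_{M_i}$. For case (i), I would exhibit two non-isomorphic non-isoclinic K3 type $F$-crystal structures on the same free $W$-module, establishing $n_M \geq 1$. The single most delicate step is the direct-sum lemma, because it has to close the apparent gap between the aggregate bound from Theorem \ref{theorem:estimate_n_elementary} and the sharper bound asserted in Theorem \ref{theorem:K3}.
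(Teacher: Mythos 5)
Your overall architecture (single-summand bounds via Theorem \ref{theorem:estimate_n_elementary} and a level-torsion analysis, then a direct-sum reduction, then lower bounds) matches the paper, and you correctly identify the crux: Theorem \ref{theorem:estimate_n_elementary} applied to the whole isoclinic direct sum only gives $2n$, and the general direct-sum bound of Proposition \ref{proposition:estimateldirectsum} only gives $n_{M_i}+n_{M_j}-1=3$, so something sharper is needed. But that sharper step is exactly where your proposal stops being a proof. Your proposed lemma (level torsion of the sum is at most the max over summands) is justified only by the heuristic that $h_0=h_2=1$ lets one ``absorb cross-component perturbations'' and reduce $g\equiv 1$ mod $p^2$ to block-diagonal form; that is a substantive unproved claim, and it is not how the level torsion is controlled. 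The paper instead computes the cross terms $\ell(j,i)$ of Proposition \ref{proposition:computelisoclinic} directly via Proposition \ref{proposition:computeldirectsum}: Katz's sharp slope estimate (Lemma \ref{lemma:sharpestimate}) applied to each isoclinic K3 summand gives $\alpha_{M_i}(q)\geq q-1$, and applied to the dual-twist $(M_j^*,p^2\varphi_j)$ together with Lemma \ref{lemma:mainlemma2} gives $\beta_{M_j}(q)\leq q+1$, whence $\ell(j,i)\leq 2$ and $\ell_M=2$. Without identifying this (or an equivalent) mechanism, the $n_M\leq 2$ bound for two or more isoclinic summands is not established.

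There are two further gaps in the lower bounds. For (ii) the theorem asserts $n_M=2$ for \emph{every} direct sum of isoclinic $F$-crystals of K3 type, so exhibiting one rank-$3$ example with $n_M\geq 2$ does not suffice; the paper's argument is uniform: $n_M=\ell_M\geq\delta_M(1)=e_r-e_1=2$ by Theorem \ref{theorem:vasiu1} and Proposition \ref{proposition:computelii}. For (iii) the inequality $n_{\oplus M_i}\geq\max_i n_{M_i}$ is not trivial (an isomorphism $(M,g\varphi)\cong(M,\varphi)$ need not respect the decomposition); it does hold here, but via the level-torsion formalism ($\ell_M\geq\ell(i,i)=\ell_{M_i}$ in Proposition \ref{proposition:computelisoclinic}, valid because the mixed sum is still a direct sum of isoclinic pieces after Newton--Hodge decomposing the non-isoclinic summands), and that justification should be supplied. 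For (i), the cleanest route to $n_M\geq 1$ is simply that the sum is not isoclinic ordinary, so $n_M\neq 0$ by Proposition \ref{proposition:n=0}. A minor slip: the Newton slopes of a non-isoclinic K3 type crystal are $r_1/(r_1+1)$, $1$, and $(r_2+2)/(r_2+1)$, which are not of the symmetric form $\mu$, $1$, $2-\mu$ unless $r_1=r_2$.
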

We prove Theorem \ref{theorem:K3} in Section \ref{section:K3}. Its proof uses Theorem \ref{theorem:estimate_n_elementary} in the isoclinic case and the Newton-Hodge decomposition theorem \cite[Theorem 1.6.1]{Katz:slopefiltration} in the non-isoclinic case.

\begin{theorem} \label{theorem:rank2}
Let $(M, \varphi)$ be an $F$-crystal of rank $2$ with Hodge slopes $0$ and $e>0$. Let $\lambda_1$ be the smallest Newton slope of $(M, \varphi)$. Then we have
\begin{enumerate}[(i)]
\item if $(M, \varphi)$ is a direct sum of two $F$-crystals of rank $1$, then $n_M = 1$;
\item if $(M, \varphi)$ is not a direct sum of two $F$-crystals of rank $1$ and is isoclinic, then $n_M = e$;
\item if $(M, \varphi)$ is not a direct sum of two $F$-crystals of rank $1$ and is non-isoclinic, then $n_M \leq 2\lambda_1$.
\end{enumerate}
\end{theorem}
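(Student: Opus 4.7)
The plan is to handle the three cases separately: Newton--Hodge decomposition for (i), Theorem~\ref{theorem:estimate_n_elementary} combined with an explicit non-isomorphism for (ii), and a Picard iteration in the isocrystal basis for (iii).

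For part (i), pick a basis $v_{1}, v_{2}$ with $\varphi(v_{1}) = v_{1}$ and $\varphi(v_{2}) = p^{e} v_{2}$. Given $g \in \GL(M)$ with $g \equiv 1 \pmod{p}$, a $p$-adic contraction argument on the line $K_{0} v_{1} \subset M[1/p]$ together with Lang's theorem produces $\tilde v_{1} \in v_{1} + pM$ satisfying $g\varphi(\tilde v_{1}) = \tilde v_{1}$, so $W\tilde v_{1}$ is a rank-$1$ $g\varphi$-invariant sub-$F$-crystal of Newton slope $0$. Because $\det(g\varphi)$ shares the $p$-adic valuation $e$ with $\det\varphi$, the Newton polygon of $(M, g\varphi)$ is $(0, e)$, coinciding with the Hodge polygon; the Newton--Hodge decomposition theorem \cite[Theorem 1.6.1]{Katz:slopefiltration} then splits $(M, g\varphi) \cong (W, \sigma) \oplus (W, p^{e}\sigma) \cong (M, \varphi)$, giving $n_{M} \leq 1$. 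For the reverse inequality, the swap matrix in the basis $(v_{1}, v_{2})$ yields $(M, g\varphi)$ with Newton polygon $(e/2, e/2) \neq (0, e)$.

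For part (ii), the upper bound $n_{M} \leq e$ is the immediate specialization of Theorem~\ref{theorem:estimate_n_elementary}: with $\lambda = e/2$ and $h_{0} = h_{e} = 1$, the right-hand side of~(\ref{equation:main}) equals $\lfloor e + 0 \rfloor = e$. For the matching lower bound, I would place $(M, \varphi)$ into a standard form (namely $\varphi(v_{1}) = v_{2}, \varphi(v_{2}) = p^{e} v_{1}$ when $e$ is odd, and an analogous normalized form for $e$ even) and take $g = I + p^{e-1} E_{12}$. Analyzing $h g \varphi = \varphi h$ for a hypothetical intertwiner $h = (h_{ij}) \in M_{2}(W)$, elimination yields $h_{11} = p(\sigma(h_{21}) - \sigma^{-1}(h_{21}))$, $h_{22} = \sigma^{-1}(h_{11})$, and $h_{12} = p^{e}\sigma^{-1}(h_{21})$, so that $h_{11}, h_{22} \in pW$ and $h_{12} \in p^{e}W$; hence $\det h$ is divisible by $p$, contradicting $h \in \GL(M)$. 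This forces $(M, g\varphi) \not\cong (M, \varphi)$ and $n_{M} \geq e$.

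For part (iii), one first observes that $\lambda_{1} \geq 1$: if $\lambda_{1} = 0$, the short exact sequence realizing $(M, \varphi)$ as an extension of rank-$1$ $F$-crystals of slopes $0$ and $e$ would split because the relevant series $\sum_{n \geq 0} p^{ne} \sigma^{\pm n}(\beta)$ converges in $W$, contradicting non-split. Hence $0 < \lambda_{1} < \lambda_{2}$ with $\lambda_{1} + \lambda_{2} = e$, and $(M, \varphi)$ admits the standard form $M = W u_{1} \oplus W v$ with $\varphi(u_{1}) = p^{\lambda_{1}} u_{1}$ and $\varphi(v) = \beta u_{1} + p^{\lambda_{2}} v$, $\beta \in W^{\times}$. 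The slope-$\lambda_{2}$ eigenvector of the isocrystal is $u_{2}' = v - \alpha u_{1}$ with $\alpha \in p^{-\lambda_{1}} W^{\times}$ given by a convergent geometric series. Given $g \in \GL(M)$ with $g \equiv 1 \pmod{p^{2\lambda_{1}}}$, I would construct $h \in \GL(M)$ satisfying $h^{-1} \varphi h \varphi^{-1} = g$ by Picard iteration of the fixed-point equation. In the isocrystal basis $(u_{1}, u_{2}')$ the operator $\mathrm{Ad}(\varphi)$ is $\sigma$-linearly diagonal with slopes $0, 0, \pm(\lambda_{2} - \lambda_{1})$, so $\mathrm{Ad}(\varphi) - \mathrm{id}$ is invertible without valuation loss (indeed with a gain of $\lambda_{2} - \lambda_{1}$ on one summand). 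The factor $2\lambda_{1}$ in the hypothesis compensates exactly for the valuation loss when conjugating $\End(M) = M_{2}(W)$ between the lattice basis and the isocrystal basis: the change-of-basis matrix has an off-diagonal entry of valuation $-\lambda_{1}$, contributing $-2\lambda_{1}$ through the $\alpha^{2}$ term in the conjugation formula. The main technical obstacle will be tracking this $p$-adic bookkeeping carefully so that every Picard iterate lies in $\GL(M)$ rather than merely in $\GL(M[1/p])$.
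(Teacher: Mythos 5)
Part (i) is correct, though you take a long road (unit-root fixed vector plus Newton--Hodge plus an explicit swap matrix) to what the paper gets in one line from Corollary \ref{corollary:directsumordinary} and Proposition \ref{proposition:n=0}; and your upper bound in (ii) is exactly the paper's. The first genuine gap is the lower bound $n_M \geq e$ in (ii). Your intertwiner computation is carried out only for the single cyclic model $\varphi(v_1)=v_2$, $\varphi(v_2)=p^e v_1$, and the reduction of an arbitrary isoclinic, non-split, rank-$2$ $F$-crystal with Hodge slopes $0,e$ to that model (or to an unspecified ``analogous normalized form for $e$ even'') is asserted, not proved. It is in fact false beyond $e=1$: already for $e=2$ the lattices $M_c = W e_1 + W(c p^{-1}e_1 + e_2)$ inside $(K_0^2, p\sigma)$ with $\sigma(c)-c$ a unit give uncountably many pairwise non-isomorphic isoclinic $F$-crystals with Hodge slopes $0,2$, so no single normal form can carry the argument. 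The intended proof needs no normal form at all: $\alpha_M(1)=0$ and $\beta_M(1)=e$ give $\delta_M(1)=e$, hence $\ell_M \geq e$ by Proposition \ref{proposition:computelii}, and $n_M=\ell_M$ for isoclinic crystals by Theorem \ref{theorem:vasiu1}.

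The second gap is that part (iii) is a plan rather than a proof. Your preliminary reductions are fine (both Newton slopes are positive integers, the triangular form with unit off-diagonal entry, the eigenvector $u_2'$ with coefficient of valuation $-\lambda_1$), and you have correctly located the source of the exponent $2\lambda_1$. But the Picard iteration you describe amounts to re-proving the inequality $n_M \leq \ell_M$ of Theorem \ref{theorem:vasiu1} by hand in this case, and the one step you explicitly defer --- that every iterate lies in $\GL(M)$ rather than $\GL(M[1/p])$ --- is precisely the hard content of that theorem; nothing in the proposal shows how the iteration converges integrally. The efficient route is to do the valuation bookkeeping once on $\End(M)$ rather than inside an iteration: exhibit the sublattice $O_+\oplus N\oplus O_-$ of $O$ spanned by the elements $p^{\lambda_1} y_i\otimes y_j^*$, check that $p^{2\lambda_1}$ times the inverse of the change-of-basis matrix to the standard basis of $\End(M)$ has entries in $W$, conclude $\ell_M\leq 2\lambda_1$, and then invoke Theorem \ref{theorem:vasiu1} for $n_M\leq\ell_M$. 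Until either that computation or the integral convergence of your iteration is actually carried out, (iii) is not established.
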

We prove Theorem \ref{theorem:rank2} in Section \ref{section:rank2}. Part (i) is an easy consequence of Corollary \ref{corollary:directsumordinary}.  We use Theorem \ref{theorem:estimate_n_elementary} to prove part (ii). For part (iii), as the rank $2$ is small, we estimate the level torsion by brute force and thus get an upper bound for the isomorphism number. 

Following \cite[Definition 1.5.1]{Vasiu:reconstructing}, we make the following definitions.
\begin{definition} \label{definition:quasi-special}
An $F$-crystal $(M, \varphi)$ of rank $r$ is called an \emph{isoclinic quasi-special $F$-crystal} if $\varphi^r(M) = p^sM$ for some integer $s$. If $(M, \varphi)$ is a direct sum of isoclinic quasi-special $F$-crystals, then it is called a \emph{quasi-special $F$-crystal}.
\end{definition}
In fact, the integer $s$ must be the sum of all Hodge slopes of $(M, \varphi)$; see Lemma \ref{lemma:smust}. Quasi-special $F$-crystals are the generalization of quasi-special Dieudonn\'e modules \cite[Definition 1.5.1]{Vasiu:reconstructing}. Moreover, they generalize special Dieudonn\'e modules \cite[Definition 3.2.3]{Manin:formalgroups}.

\begin{theorem} \label{theorem:quasi-special}
Let $(M, \varphi)$ be a quasi-special $F$-crystal. Suppose $(M, \varphi)$ has Hodge slopes $e_1 \leq e_2 \leq \dots \leq e_r$ and set $s := \sum_{i=1}^r e_i$. The following inequality holds:
\[n_M \leq \min\{s, re_r-s\}.\]
\end{theorem}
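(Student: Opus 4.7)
The plan is to invoke Theorem \ref{theorem:vasiu1} (which gives $n_M\le\ell_M$ for the level torsion $\ell_M$ of \cite{Vasiu:reconstructing}) and prove the sharper estimate $\ell_M\le\min\{s,re_r-s\}$. Since the level torsion is compatible with direct sums, and the bound is monotone in the sense that $\min\{s_i,r_ie_{r,i}-s_i\}\le\min\{s,re_r-s\}$ for each isoclinic summand (using $e_{r,i}\le e_r$ and $s-s_i=\sum_{j\ne i}s_j\le\sum_{j\ne i}r_je_r=(r-r_i)e_r$), the problem reduces to the case of a single isoclinic quasi-special $(M,\varphi)$ with $\varphi^r(M)=p^sM$. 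After this reduction, the unique Newton slope is $\lambda=s/r$, and by Lemma \ref{lemma:smust} the integer $s$ in Definition \ref{definition:quasi-special} really is the sum of Hodge slopes.

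For the bound $\ell_M\le s$, I would exploit the identity $\varphi^r=p^s\tau$, where $\tau$ is a $\sigma^r$-linear automorphism of $M$ supplied by the fact that $\varphi^r:M\to p^sM$ is surjective and $\varphi$ is injective. The definition of $\ell_M$ measures the smallest $n$ beyond which all $\varphi$-compatibilities of the chain of intersections/sums formed from $\varphi^i(M)$ and $p^jM$ become automatic mod $p^n$. The cyclic identity $\varphi^r=p^s\tau$ converts every $r$ iterates of $\varphi$ into multiplication by $p^s$ (up to a unit), so obstructions to such compatibilities must be annihilated by $p^s$; this should force $\ell_M\le s$. For the bound $\ell_M\le re_r-s$, I would pass to the dual $(M^\vee,\varphi^\vee)$ with $\varphi^\vee(f)=p^{e_r}\sigma\circ f\circ\varphi^{-1}$. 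The Hodge slopes of the dual are $0\le e_r-e_{r-1}\le\cdots\le e_r-e_1=e_r$, summing to $re_r-s$, and a direct computation gives $(\varphi^\vee)^r(M^\vee)=p^{re_r-s}M^\vee$, so the dual is again isoclinic quasi-special with associated integer $re_r-s$. Invariance of the level torsion under duality then yields $\ell_M=\ell_{M^\vee}\le re_r-s$, completing the proof.

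The main obstacle will be the quantitative estimate $\ell_M\le s$. Qualitative finiteness of $\ell_M$ is already guaranteed by \cite[Main Theorem A]{Vasiu:CBP}, but extracting the explicit bound $s$ requires carefully inspecting the defining intersections of the level torsion and iterating $\varphi$ through $r$ steps while tracking precisely which power of $p$ is absorbed at each stage. I expect this argument to closely parallel, and generalize, the proof of \cite[Theorem 1.5.2]{Vasiu:reconstructing} (the quasi-special Dieudonn\'e module case, where $e_r=1$ and the bound reduces to $\min\{c,d\}$), with the key modification being the replacement of the denominator $1$ by the arbitrary largest Hodge slope $e_r$ throughout the iterative estimates.
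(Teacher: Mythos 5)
Your overall strategy (bound $n_M$ by the level torsion, get $\ell_M\le s$ from the periodicity $\varphi^r(M)=p^sM$, and get $\ell_M\le re_r-s$ by duality) is the paper's, but the reduction to a single isoclinic summand is a genuine gap. The level torsion of a direct sum is \emph{not} the maximum of the level torsions of the summands: by Proposition \ref{proposition:computelisoclinic} one must also control the cross terms $\ell(j,i)=\max\{0,\beta_{M_j}(q)-\alpha_{M_i}(q)\}$ for $i\neq j$, and these can be as large as $\ell_{M_i}+\ell_{M_j}$; indeed Corollary \ref{corollary:directsumordinary} already exhibits a direct sum with $\ell_M=1$ while every $\ell_{M_i}=0$. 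So knowing $\ell_{M_i}\le\min\{s_i,\,r_ie^{(i)}_{r_i}-s_i\}\le\min\{s,\,re_r-s\}$ for each $i$ does not bound $\ell_M$. What saves the theorem is that the per-summand bounds are \emph{additive}, not merely monotone: Proposition \ref{proposition:estimateldirectsum} gives $n_M\le\max\{1,\,n_{M_i},\,n_{M_i}+n_{M_j}-1\}$, and one checks $s_i+s_j\le s$ together with $r_ie^{(i)}_{r_i}+r_je^{(j)}_{r_j}-s_i-s_j\le re_r-s$ (using $e^{(l)}_{r_l}\le e_r$ and $s_l\le r_le_r$ for the remaining indices $l$ --- essentially the computation you already wrote as $s-s_i\le(r-r_i)e_r$, redirected to bound the \emph{sum} of two summand bounds). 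The boundary case $\min\{s,re_r-s\}=0$ versus the stray $\max\{1,\cdot\}$ must then be disposed of via Proposition \ref{proposition:n=0}.

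The estimate $\ell_M\le s$ for a single isoclinic quasi-special crystal, which you flag as ``the main obstacle,'' is actually immediate once you use the correct description of the level torsion in the isoclinic case: $\ell_M=\max_{q>0}\delta_M(q)=\max_{q>0}(\beta_M(q)-\alpha_M(q))$ by Proposition \ref{proposition:computelii}. Since $\varphi^{r+j}(M)=p^s\varphi^{j}(M)$, the function $\delta_M$ is periodic with period $r$, so only $1\le q\le r$ matters, and there $\alpha_M(q)\ge 0$ while $\beta_M(q)\le s$ because $p^sM=\varphi^r(M)\subset\varphi^q(M)$. Your description of $\ell_M$ in terms of ``chains of intersections/sums of $\varphi^i(M)$ and $p^jM$'' does not match its actual definition (which lives inside $\End(M)$ via $O_+\oplus O_0\oplus O_-$), and the assertion that obstructions ``must be annihilated by $p^s$'' would not by itself yield the quantitative bound. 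The duality half of your argument, $\ell_M=\ell_{M^\vee}\le re_r-s$, is exactly the paper's (via Remark \ref{remark:doubleduality}).
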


We note that Theorem \ref{theorem:quasi-special} is not always optimal; see Example \ref{example:quasispecialoptimal} and Remark \ref{remark:quasispecial}.
\vskip 0.15in
\noindent \textbf{Acknowledgements}. The author would like to thank Adrian Vasiu for suggesting this problem, and for numerous valuable discussions and comments on this and related topics over the course of this project. The author would also like to thank the referee for many valuable comments and suggestions which in particular led to a shorter way to prove Lemma \ref{lemma:n=0}, and a much better way to present the proof of Theorem \ref{theorem:estimate_n_elementary} via Lemma \ref{lemma:mainlemma2}.

\section{Preliminaries} \label{section:pre}
\subsection{Notations}
A \emph{latticed $F$-isocrystal} over $k$ is a pair $(M, \varphi)$, where $M$ is a free $W$-module of finite rank $r$ and $\varphi$ is a $\sigma$-linear automorphism of $M \otimes_{W} K_0$. For the sake of simplicity, we denote $M \otimes_{W} K_0$ by $M[1/p]$ for the rest of this paper. Recall that if $\varphi(M) \subset M$, then $(M, \varphi)$ is called an $F$-crystal over $k$. Moreover, if $pM \subset \varphi(M)$, then $(M, \varphi)$ is called a Dieudonn\'e module over $k$. 

Let $(M_1, \varphi_1)$ and $(M_2, \varphi_2)$ be two latticed $F$-isocrystals. The set of all $W$-linear homomorphisms from $M_1$ to $M_2$ is a free $W$-module, denoted by $\Hom(M_1,M_2)$. Let $\varphi_{12}$ be the $\sigma$-linear automorphism of $\Hom(M_1[1/p], M_2[1/p])$ defined by the following rule: for any $f \in \Hom(M_1[1/p], M_2[1/p])$, let 
\[\varphi_{12}(f) := \varphi_2 \circ f \circ \varphi_1^{-1} \in \Hom(M_1[1/p], M_2[1/p]).\]
As $\Hom(M_1, M_2)[1/p] \cong \Hom(M_1[1/p], M_2[1/p])$ as $K_0$-vector spaces, the pair $(\Hom(M_1, M_2), \varphi_{12})$ is a latticed $F$-isocrystal. If $(M_1, \varphi_1)=(M_2, \varphi_2)$, then $(\Hom(M_1, M_2), \varphi_{12})$ is denoted by $(\End(M_1), \varphi_1)$. If $(M_2, \varphi_2) = (W, \sigma)$, then $(\Hom(M_1, M_2), \varphi_{12})$ is denoted by $(M_1^*, \varphi_1)$ and called the \emph{dual} of $(M_1, \varphi_1)$. The isomorphism number of a latticed $F$-isocrystal can be defined in the same way as the isomorphism number of an $F$-crystal. Moreover, the isomorphism number of a latticed $F$-isocrystal is invariant under duality. See \cite[Fact 4.2.1]{Vasiu:reconstructing} for a proof in the Dieudonn\'e module case, which is easily adapted to the latticed $F$-isocrystal case.

\begin{lemma} \label{lemma:rescale}
Let $(M, \varphi)$ be a latticed $F$-isocrystal over $k$ and let $n_M$ be its isomorphism number. For all $m \in \mathbb{Z}$, the isomorphism number of the latticed $F$-isocrystal $(M, p^m\varphi)$ is also $n_M$.
\end{lemma}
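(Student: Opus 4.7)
My plan is to prove the lemma directly from the definition of isomorphism number by establishing a $\varphi$-independent bijection on the ``bad'' set of automorphisms. Specifically, I would verify the following single claim: for every $g\in\GL(M)$ and every $m\in\mathbb{Z}$, the latticed $F$-isocrystals $(M,g\varphi)$ and $(M,\varphi)$ are isomorphic if and only if $(M,gp^m\varphi)$ and $(M,p^m\varphi)$ are isomorphic. Once this equivalence is in hand, the lemma is immediate, since the congruence subgroup $\{g\in\GL(M)\mid g\equiv 1\pmod{p^n}\}$ depends only on $n$ and $M$.

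First a preliminary remark: $(M,p^m\varphi)$ is a bona fide latticed $F$-isocrystal for every $m\in\mathbb{Z}$ because $p^m\varphi$ is again a $\sigma$-linear automorphism of $M[1/p]$, so $n_{(M,p^m\varphi)}$ is defined in the sense of the generalization recalled just before the lemma. Now recall that an isomorphism between two latticed $F$-isocrystals $(M,\varphi_1)$ and $(M,\varphi_2)$ is an element $h\in\GL(M)$ satisfying $h\varphi_1=\varphi_2 h$ as $\sigma$-linear endomorphisms of $M[1/p]$. Since $p^m\in W$ is central and fixed by $\sigma$, it commutes with every $W$-linear map and with $\varphi$; hence
\[
h(gp^m\varphi)=p^m(hg\varphi)\qquad\text{and}\qquad(p^m\varphi)h=p^m(\varphi h).
\]
The scalar $p^m$ is invertible in $K_0$, so $M[1/p]$ is $p$-torsion free, and cancelling $p^m$ shows that $h(gp^m\varphi)=(p^m\varphi)h$ if and only if $hg\varphi=\varphi h$. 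Thus the \emph{same} $h$ witnesses an isomorphism $(M,gp^m\varphi)\cong(M,p^m\varphi)$ as witnesses $(M,g\varphi)\cong(M,\varphi)$, proving the claim.

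To finish, fix $n\geq 0$. By the claim, the condition ``$(M,g\varphi)\cong(M,\varphi)$ for every $g\in\GL(M)$ with $g\equiv 1\pmod{p^n}$'' is equivalent to ``$(M,gp^m\varphi)\cong(M,p^m\varphi)$ for every such $g$''. Taking the smallest $n$ for which either (equivalently both) of these conditions holds yields $n_M=n_{(M,p^m\varphi)}$, as desired. The argument involves no real obstacle; the only point requiring mild attention is that for $m<0$ the object $p^m\varphi$ is not an $F$-crystal, which is why the lemma is phrased in the latticed $F$-isocrystal category and one uses the generalized Definition \ref{def:isonum}.
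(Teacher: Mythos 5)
Your proof is correct and is essentially the argument the paper has in mind: the paper itself only remarks that the proof is straightforward and defers the details to the author's thesis, and your observation that the same $h\in\GL(M)$ intertwines $g\varphi$ with $\varphi$ if and only if it intertwines $gp^m\varphi$ with $p^m\varphi$ (after cancelling the invertible scalar $p^m$ on $M[1/p]$) is exactly that straightforward verification. The only cosmetic slip is calling $p^m$ an element of $W$ when $m<0$; it lies in $K_0$, which is all the argument needs.
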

\begin{proof}
The proof is straightforward. For details, see \cite[Proposition 3.4]{Xiao:thesis}.
\end{proof}

\begin{remark} \label{remark:rescale}
By Lemma \ref{lemma:rescale}, we can assume that the smallest Hodge slope of an $F$-crystal $(M, \varphi)$ is zero without changing its isomorphism number by multiplying an appropriate power of $p$ to $\varphi$.
\end{remark}

\subsection{The level torsion} \label{section:level torsion}
We now recall the definition of the level torsion from \cite{Vasiu:reconstructing}. It is the main tool to find good upper bounds for the isomorphism number of latticed $F$-isocrystals.

Let $(M, \varphi)$ be a latticed $F$-isocrystal. By using Dieudonn\'e \cite[Theorems 1, 2]{Dieudonne:fcrystalclassify} and Manin's \cite[Chapter 2, Section 4]{Manin:formalgroups} classification of $F$-isocrystals, we obtain a direct sum decomposition \[\End(M[1/p]) \cong L_+ \oplus L_0 \oplus L_-\] into $K_0$-vector spaces, where
\[L_+ = \bigoplus_{\lambda_1 < \lambda_2} \Hom(E_{\lambda_1}, E_{\lambda_2}), \quad L_0 = \bigoplus_{\lambda} \End(E_{\lambda}), \quad L_- = \bigoplus_{\lambda_1 < \lambda_2} \Hom(E_{\lambda_2}, E_{\lambda_1}).\]
Define
\[O_+ = \bigcap_{i=0}^{\infty} \varphi^{-i}(\End(M) \cap L_+), \quad O_- = \bigcap_{i=0}^{\infty} \varphi^i(\End(M) \cap L_-), 
\]
\[O_0 = \bigcap_{i=0}^{\infty} \varphi^{-i}(\End(M) \cap L_0) = \bigcap_{i=0}^{\infty} \varphi^i(\End(M) \cap L_0). \]
For $* \in \{+, 0, -\}$, each $O_*$ is a lattice of $L_*$. We have the following relations:
\[\varphi(O_+) \subset O_+, \quad \varphi(O_0) = O_0 = \varphi^{-1}(O_0), \quad \varphi^{-1}(O_-) \subset O_-.\]
Write $O := O_+ \oplus O_0 \oplus O_-$; it is a lattice of $\End(M)[1/p]$ sitting inside $\End(M)$. The \emph{level torsion} $\ell_M$ of $(M, \varphi)$ is defined by the following two disjoint rules:
\begin{enumerate}[(i)]
\item if $O = \End(M)$ and the ideal generated by $O_+ \oplus O_-$ is not topologically nilpotent, then the level torsion $\ell_M :=1$;
\item in all other cases, the level torsion $\ell_M$ is the smallest non-negative integer such that $p^{\ell_M} \End(M) \subset O$.
\end{enumerate}
 
Vasiu proved the following important theorem:

\begin{theorem} \label{theorem:vasiu1}
Let $(M,\varphi)$ be a latticed $F$-isocrystal. Then $n_M \leq \ell_M$. Moreover, if $(M,\varphi)$ is a direct sum of isoclinic latticed $F$-isocrystals, then $n_M = \ell_M$.
\end{theorem}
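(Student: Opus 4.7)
The plan is to prove the two assertions separately. The inequality $n_M \leq \ell_M$ is obtained by constructing, for any $g \in \GL(M)$ with $g \equiv 1 \pmod{p^{\ell_M}}$, an isomorphism $(M, \varphi) \cong (M, g\varphi)$. The equality in the isoclinic direct sum case is then established by exhibiting an explicit $g \equiv 1 \pmod{p^{\ell_M - 1}}$ for which no such isomorphism exists.

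For the upper bound, write $T_\varphi(x) := \varphi x \varphi^{-1}$ for the conjugation action on $\End(M[1/p])$. Any $h \in \GL(M)$ realizing the isomorphism $(M, \varphi) \cong (M, g\varphi)$ must satisfy $h\varphi = g\varphi h$, equivalently $h = g \cdot T_\varphi(h)$. Writing $h = 1 + x$, this becomes the fixed-point equation
\[
x = (g-1) + g \cdot T_\varphi(x).
\]
By the definition of the level torsion, $g-1 \in p^{\ell_M} \End(M) \subset O = O_+ \oplus O_0 \oplus O_-$. I would solve this equation componentwise along the decomposition: on $O_+$ the operator $T_\varphi$ is $p$-adically contracting, because on each summand $\Hom(E_{\lambda_1}, E_{\lambda_2})$ of $L_+$ the iterates of $T_\varphi$ raise Newton slopes by $\lambda_2 - \lambda_1 > 0$, so a Neumann-type series in $T_\varphi$ converges $p$-adically; by the dual argument $T_\varphi^{-1}$ is contracting on $O_-$; on $O_0$ the operator $T_\varphi$ is a bijection of the lattice, and the component must be extracted by a more delicate successive-approximation argument that exploits the already high $p$-divisibility of $g - 1$. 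The resulting limit $x$ lies in $p \cdot \End(M)$, so $h := 1 + x$ belongs to $\GL(M)$.

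For the equality in the isoclinic direct sum case one has $L_+ = L_- = 0$, so $O = O_0 = \bigcap_{i \in \mathbb{Z}} \varphi^i(\End(M))$. Assuming $\ell_M \geq 1$, pick $\alpha \in \End(M)$ with $p^{\ell_M - 1}\alpha \notin O$, meaning $\varphi^{-i}(p^{\ell_M - 1}\alpha)$ escapes $\End(M)$ for some $i \geq 0$. Setting $g := 1 + p^{\ell_M - 1}\alpha$, the plan is to show that a hypothetical isomorphism $h : (M, \varphi) \to (M, g\varphi)$ would, via iteration of $h = g \cdot T_\varphi(h)$ in both directions, force $\varphi^{-i}(p^{\ell_M - 1}\alpha) \in \End(M)$ for all $i \geq 0$, contradicting the choice of $\alpha$. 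I expect the main obstacle to be in the upper bound, specifically the edge case of clause (i) of the definition of $\ell_M$: when $O = \End(M)$ and the ideal generated by $O_+ \oplus O_-$ fails to be topologically nilpotent, the straightforward Neumann-type iteration breaks down, and one genuinely needs the topological nilpotency hypothesis (which is precisely what clause (i) excludes, pushing $\ell_M$ to $1$) to make the argument on the $O_0$-component close.
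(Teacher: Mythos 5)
The paper does not prove this theorem at all: it is quoted verbatim from Vasiu's work and the ``proof'' is the citation to \cite[Main Theorem A]{Vasiu:reconstructing}. So your proposal has to be judged as a reconstruction of that argument, and as such it has the right skeleton for the inequality $n_M\leq \ell_M$ (successive approximation against the slope decomposition $O_+\oplus O_0\oplus O_-$, using that $T_\varphi$ is topologically nilpotent on $O_+$, $T_\varphi^{-1}$ on $O_-$, and a bijection of the lattice $O_0$), but several steps as written do not close. First, the equation $x=(g-1)+gT_\varphi(x)$ does \emph{not} decouple along $O_+\oplus O_0\oplus O_-$: conjugation by $\varphi$ preserves the slope decomposition of $\End(M[1/p])$, but left multiplication by $g$ does not, so ``solving componentwise'' is not available. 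One must instead run a genuine iteration, replacing $g$ by $h_1^{-1}gT_\varphi(h_1)$ and showing the error gains $p$-divisibility at each stage; it is exactly here (and not only on the $O_0$-component) that the topological nilpotency excluded by clause (i) of the definition of $\ell_M$ is needed. Second, on $O_0$ the required input is the surjectivity of $1-T_\varphi$ on $O_0$, which follows from the Fontaine/Lang-type statement that a $\sigma$-linear automorphism of a lattice admits a basis of fixed vectors (the same fact as Lemma \ref{lemma:n=0}, applied to $(O_0,T_\varphi)$); you only gesture at ``a more delicate argument.'' Third, the assertion that the limit $x$ lies in $p\End(M)$ is false in general: the $O_0$-solution of $x_0-T_\varphi(x_0)=y_0$ can be a unit element of $O_0$, and elements of $O_\pm$ need not lie in $p\End(M)$, so the invertibility of $h=1+x$ has to be argued separately (via $1+O_\pm\subset\GL(M)$ by topological nilpotency and a choice of fixed-vector solution on $O_0$).

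The second assertion is where your sketch is genuinely not a proof. From $h\varphi=g\varphi h$ one only gets $g=h\,T_\varphi(h)^{-1}$; choosing $\alpha$ with $p^{\ell_M-1}\alpha\notin O$ and setting $g=1+p^{\ell_M-1}\alpha$, nothing in the relation forces $p^{\ell_M-1}\alpha$ itself back into $O$ --- the coboundary $hT_\varphi(h)^{-1}-1$ need not equal, or even be congruent in a useful sense to, the element you started from. Vasiu's proof of optimality for direct sums of isoclinic pieces runs through the explicit computations reproduced in this paper as Propositions \ref{proposition:computelii}, \ref{proposition:computelisoclinic} and \ref{proposition:computeldirectsum}: one identifies $\ell_M$ with $\max_q\delta_M(q)$ (resp.\ with the $\ell(j,i)$) and then produces, for each $m<\ell_M$, an explicit $g\equiv 1\bmod p^{m}$ for which $(M,g\varphi)$ and $(M,\varphi)$ are distinguished by a lattice-theoretic invariant of $\varphi^q$ (for instance the pair $(\alpha_M(q),\beta_M(q))$). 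If you want a self-contained argument you should either supply that construction or explain how your ``iteration in both directions'' actually extracts membership of $\varphi^{-i}(p^{\ell_M-1}\alpha)$ in $\End(M)$; as it stands this step is asserted, not proved.
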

\begin{proof}
See \cite[Main Theorem A]{Vasiu:reconstructing}.
\end{proof}

\subsubsection{Computing the level torsion of isoclinic $F$-crystals} 

Let $(M, \varphi)$ be a latticed $F$-isocrystal. Following \cite[Definitions 4.1]{Vasiu:reconstructing}, we introduce the following definitions. For $q \in \mathbb{Z}_{>0}$, let $\alpha_M(q) \in \mathbb{Z}$ be the largest number such that $\varphi^q(M) \subset p^{\alpha_M(q)}M$ and let $\beta_M(q) \in \mathbb{Z}$ be the smallest number such that $p^{\beta_M(q)}M \subset \varphi^q(M)$; set $\delta_M(q) := \beta_M(q) - \alpha_M(q) \geq 0$. We note that if $(M, \varphi)$ is an $F$-crystal, then $\alpha_M(q), \beta_M(q) \geq 0$. It is not hard to prove that if $(M, \varphi)$ is isoclinic with Newton slope $\lambda$, then 
\begin{equation} \label{equation:alphabetafact1}
\alpha_M(q) \leq q\lambda \leq \beta_M(q),\; \forall \; q = 1, 2, \dots.
\end{equation}
Moreover we have 
\begin{equation} \label{equation:alphabetafact2}
\alpha_M(q) = q\lambda \; \;  \textrm{if and only if} \; \; \beta_M(q) = q\lambda.
\end{equation}
See \cite[Lemma 4.2.3]{Vasiu:reconstructing} for a proof of \eqref{equation:alphabetafact1} and \eqref{equation:alphabetafact2} in the Dieudonn\'e module case.

\begin{proposition}\label{proposition:computelii}
Let $(M,\varphi)$ be an isoclinic latticed $F$-isocrystal. Then $\ell_M = \max\{\delta_M(q) \; | \; q \in \mathbb{Z}_{> 0} \}$.
\end{proposition}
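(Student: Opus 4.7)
My plan is to specialize the description of $O$ to the isoclinic case and then reduce everything to an elementary divisor calculation. Since $(M,\varphi)$ is isoclinic with Newton slope $\lambda$, the Dieudonn\'e--Manin decomposition gives $M[1/p] \cong E_\lambda^m$, so $\End(M[1/p]) \cong \End(E_\lambda^m)$ has all Newton slopes equal to $0$. Hence in the decomposition $\End(M[1/p]) = L_+ \oplus L_0 \oplus L_-$ we get $L_+ = L_- = 0$ and $L_0 = \End(M[1/p])$; this forces $O_+ = O_- = 0$ and $O = O_0 = \bigcap_{i \in \mathbb{Z}} \varphi^i(\End(M))$, using the equality of the two one-sided intersections together with $\varphi(O_0) = O_0 = \varphi^{-1}(O_0)$. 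Because $O_+ \oplus O_-$ is trivially topologically nilpotent, only case (ii) of the level torsion definition applies, so $\ell_M$ is the smallest $n \geq 0$ with $p^n \End(M) \subset O$.

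The second step is to describe each $\varphi^i(\End(M))$ concretely. The $\sigma$-linear action of $\varphi$ on $\End(M[1/p])$ is conjugation $f \mapsto \varphi f \varphi^{-1}$, so $f \in \varphi^i(\End(M))$ is equivalent to $\varphi^{-i} f \varphi^i \in \End(M)$, which in turn is equivalent to $f(\varphi^i(M)) \subset \varphi^i(M)$. Thus $\varphi^i(\End(M)) = \End(\varphi^i(M))$ as a $W$-sublattice of $\End(M[1/p])$, and the problem reduces to finding the smallest $n \geq 0$ with $p^n \End(M) \subset \End(\varphi^i(M))$ simultaneously for every $i \in \mathbb{Z}$.

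For each $q \in \mathbb{Z}_{>0}$, the elementary divisor theorem applied to the pair of $W$-lattices $M$ and $\varphi^q(M)$ in $M[1/p]$ produces a basis $u_1, \ldots, u_r$ of $M$ and integers $\alpha_M(q) = c_1 \leq \cdots \leq c_r = \beta_M(q)$ such that $p^{c_1} u_1, \ldots, p^{c_r} u_r$ is a $W$-basis of $\varphi^q(M)$. Writing $E_{ij}$ for the matrix units relative to $\{u_k\}$, one has $\End(\varphi^q(M)) = \bigoplus_{i,j} W \cdot p^{c_j - c_i} E_{ij}$, and comparing with $\End(M) = \bigoplus_{i,j} W \cdot E_{ij}$ shows that $p^n \End(M) \subset \End(\varphi^q(M))$ holds iff $n \geq c_j - c_i$ for all $i,j$, iff $n \geq c_r - c_1 = \delta_M(q)$. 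The identities $\alpha_M(-q) = -\beta_M(q)$ and $\beta_M(-q) = -\alpha_M(q)$, immediate from applying $\varphi^{-q}$ to the inclusions defining $\alpha_M(q)$ and $\beta_M(q)$, yield $\delta_M(-q) = \delta_M(q)$, while the case $i=0$ contributes $0$. Intersecting over $i \in \mathbb{Z}$ thus gives $\ell_M = \max\{\delta_M(q) : q \in \mathbb{Z}_{>0}\}$, the max being automatically finite since each $\delta_M(q)$ is bounded by $\ell_M$. The main technical step is the elementary divisor computation; everything else is bookkeeping with the definitions.
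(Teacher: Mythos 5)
Your proof is correct: the reduction $L_\pm=0$, the identification $\varphi^i(\End(M))=\End(\varphi^i(M))$, and the elementary-divisor computation showing $p^n\End(M)\subset\End(\varphi^q(M))$ iff $n\geq\delta_M(q)$ (with $\delta_M(-q)=\delta_M(q)$) are all sound. The paper itself only cites \cite[Proposition 4.3(a)]{Vasiu:reconstructing} and \cite[Proposition 3.18]{Xiao:thesis} for this statement, and your argument is essentially the standard one carried out in those references, so there is nothing to add.
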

\begin{proof}
This proposition is a generalization of \cite[Proposition 4.3(a)]{Vasiu:reconstructing} and is proved in a similar way. For details, see \cite[Propositon 3.18]{Xiao:thesis}.
\end{proof}

\subsubsection{Computing the level torsion of a direct sum of isoclinic $F$-crystals} \label{subsection:computeldirectsum}
In this subsection, the latticed $F$-isocrystal $(M, \varphi) \cong \bigoplus_{i \in I} (M_i, \varphi_i)$ will always be a finite direct sum of isoclinic latticed $F$-isocrystals $(M_i, \varphi_i)$ with Newton slopes $\lambda_i$. For $i \in I$, let $B_i$ be a $W$-basis of $M_i$ and $B_i^*$ be the corresponding dual basis of $M_i^*$. Then 
\[B_i \otimes B_j^* := \{x \otimes y^* \; | \; x \in B_i, y^* \in B^*_j\}\] 
is a $W$-basis of $M_i \otimes M_j^* \cong \Hom(M_j, M_i)$. For each pair $(j,i) \in I \times I$ with $\lambda_j \leq \lambda_i$, define $\ell(j,i) \in \mathbb{Z}_{\geq 0}$ to be the smallest integer such that for all $q \in \mathbb{Z}_{>0}$ and all $x \otimes y^* \in B_i \otimes B_j^*$, we have $p^{\ell(j,i)}\varphi^q(x \otimes y^*) \in \Hom(M_j, M_i)$. We observe that $\ell(i,i) = \ell_{M_i}$. It is not hard to see that 
\[\ell_0 := \max\{\ell(j,i) \; | \; (j, i) \in I \times I, \lambda_j \leq \lambda_i \}\]
is the smallest non-negative integer $\ell$ such that $p^{\ell} \End(M) \subset O$. In most cases, we have $\ell_0 = \ell_M$ except when $O = \End(M)$ and $O_+\oplus O_-$ is not topologically nilpotent,  we have $\ell_M = 1$ and $\ell_0 = 0$. Therefore, we define an integer $\epsilon_M \in \{0,1\}$ by the following two rules to fix this problem:
\begin{enumerate}[(i)]
\item if $O = \End(M)$ and $O_+ \oplus O_-$ is not topologically nilpotent, then $\epsilon_M := 1$;
\item in all other cases, define $\epsilon_M:=0$.
\end{enumerate}

\begin{proposition} \label{proposition:computelisoclinic}
Let $(M, \varphi)$ and $\epsilon_M$ be as above, we have the following formula
\[ \ell_M = \max\{\epsilon_M, \ell(j,i) \; | \; (j, i) \in I \times I, \lambda_j \leq \lambda_i \}.\]
\end{proposition}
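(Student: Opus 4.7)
The plan is to establish two claims that together yield the formula: (a) the integer $\ell_0 := \max\{\ell(j,i) : (j,i) \in I \times I,\ \lambda_j \leq \lambda_i\}$ is the smallest non-negative integer such that $p^{\ell_0}\End(M) \subset O$, and (b) the case analysis in the definition of $\ell_M$ yields $\ell_M = \max(\epsilon_M, \ell_0)$.

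For (a), the direct sum decomposition $(M, \varphi) = \bigoplus_{i \in I}(M_i, \varphi_i)$ induces a $W$-module decomposition $\End(M) = \bigoplus_{(j,i)} \Hom(M_j, M_i)$, on which $\varphi$ acts summand-wise via $f \mapsto \varphi_i \circ f \circ \varphi_j^{-1}$; each $(\Hom(M_j, M_i), \varphi_{j,i})$ is isoclinic of Newton slope $\lambda_i - \lambda_j$, so the summand $\Hom(M_j, M_i)[1/p]$ lies in $L_+$, $L_0$, or $L_-$ according to whether $\lambda_j < \lambda_i$, $\lambda_j = \lambda_i$, or $\lambda_j > \lambda_i$, and $O$ decomposes correspondingly as $O = \bigoplus_{(j,i)} O^{j,i}$. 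The containment $p^\ell \End(M) \subset O$ then reduces to the component-wise condition $p^\ell \Hom(M_j, M_i) \subset O^{j,i}$ for every pair. When $\lambda_j < \lambda_i$, this is precisely $\ell \geq \ell(j,i)$ by the very definition of $\ell(j,i)$. When $\lambda_j > \lambda_i$, the condition involves $\varphi^{-q}$ applied to $\Hom(M_j, M_i)$; I would use the perfect, Frobenius-equivariant trace pairing $\Hom(M_j, M_i) \otimes \Hom(M_i, M_j) \to W$, $(f, g) \mapsto \mathrm{tr}(g \circ f)$, to identify this condition with the $L_+$ condition on the swapped pair $(i,j)$, yielding $\ell \geq \ell(i,j)$. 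When $\lambda_j = \lambda_i$, both directions of the intersection defining $O_0$ must be used, giving $\ell \geq \max(\ell(j,i), \ell(i,j))$. In each of the three cases the required inequality already appears in the set defining $\ell_0$, so $\ell_0$ is indeed the smallest such $\ell$.

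For (b), the definition of $\ell_M$ splits as follows: either $O = \End(M)$ and $O_+ \oplus O_-$ is not topologically nilpotent, in which case $\ell_M := 1$, or else $\ell_M$ is the smallest $\ell$ with $p^\ell \End(M) \subset O$. The first alternative forces $\ell_0 = 0$ and coincides exactly with $\epsilon_M = 1$, so $\max(\epsilon_M, \ell_0) = 1 = \ell_M$; the second has $\epsilon_M = 0$ and $\max(\epsilon_M, \ell_0) = \ell_0 = \ell_M$ by (a). Either way $\ell_M = \max(\epsilon_M, \ell_0)$, proving the proposition. The main technical obstacle is the trace-duality argument used in the $L_-$ and $L_0$ cases, which amounts to establishing the identity $\beta_N(q) = -\alpha_{N^*}(q)$ for the trace-dual latticed $F$-isocrystal $N^* = \Hom(M_i, M_j)$ of $N = \Hom(M_j, M_i)$; this identity also underlies the equality of the two intersections used to define $O_0$.
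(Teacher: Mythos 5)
Your proof is correct. The paper itself gives no argument here beyond citing Vasiu's Scholium 3.5.1, and the text preceding the proposition asserts exactly your claim (a) as ``not hard to see''; your block decomposition of $\End(M)$, the trace-duality identity $\beta_N(q) = -\alpha_{N^*}(q)$ handling the $L_-$ and $L_0$ blocks, and your case (b) matching the role the paper assigns to $\epsilon_M$ supply precisely the missing details along the intended lines.
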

\begin{proof}
See {\cite[Scholium 3.5.1]{Vasiu:reconstructing}}.
\end{proof}
In general, it is easy to compute $\epsilon_M$. If $j=i$, then $\ell(j,i) = \ell_{M_i}$ can be computed by Proposition \ref{proposition:computelii}. If $j \neq i$, then we use the next proposition to compute $\ell(j,i)$.
\begin{proposition} \label{proposition:computeldirectsum}
Let $(M,\varphi) = \bigoplus_{i \in I}(M_i, \varphi_i)$ be a direct sum of two or more isoclinic latticed $F$-isocrystals. For $(j, i) \in I \times I$, $j \neq i$, and $\lambda_j \leq \lambda_i$, we have 
\[\ell(j,i) = \max\{0, \beta_{M_j}(q) - \alpha_{M_i}(q) \; | \; q \in \mathbb{Z}_{> 0} \}.\]
\end{proposition}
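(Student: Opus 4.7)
My plan is to compute the action of $\varphi^q$ on the lattice $\Hom(M_j, M_i) \cong M_i \otimes_W M_j^*$ by simultaneously putting $\varphi_i^q$ on $M_i$ and $\varphi_j^q$ on $M_j$ into elementary-divisor form, and then reading the minimal $p$-power shift off the resulting tensor basis.

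Fix $q \in \mathbb{Z}_{>0}$. The elementary divisors theorem applied to the $W$-lattice $\varphi_i^q(M_i) \subset M_i$ produces bases $(f^{(i)}_\beta)_\beta$, $(e^{(i)}_\beta)_\beta$ of $M_i$ and integers $b_1 \leq \dots \leq b_{r_i}$ with $\varphi_i^q(f^{(i)}_\beta) = p^{b_\beta} e^{(i)}_\beta$; in particular $b_1 = \alpha_{M_i}(q)$ and $b_{r_i} = \beta_{M_i}(q)$. Analogously I obtain bases $(f^{(j)}_\alpha)_\alpha$, $(e^{(j)}_\alpha)_\alpha$ of $M_j$ and $a_1 \leq \dots \leq a_{r_j}$ with $\varphi_j^q(f^{(j)}_\alpha) = p^{a_\alpha} e^{(j)}_\alpha$, so $a_1 = \alpha_{M_j}(q)$ and $a_{r_j} = \beta_{M_j}(q)$. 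Inverting this yields $\varphi_j^{-q}(e^{(j)}_\alpha) = p^{-a_\alpha} f^{(j)}_\alpha$.

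Next, let $((f^{(j)}_\alpha)^*)_\alpha$ and $((e^{(j)}_\alpha)^*)_\alpha$ be the bases of $M_j^*$ dual to $(f^{(j)}_\alpha)_\alpha$ and $(e^{(j)}_\alpha)_\alpha$ respectively. Using $\varphi^q(y^*) = \sigma^q \circ y^* \circ \varphi_j^{-q}$, a direct evaluation on $e^{(j)}_\beta$ gives $\varphi^q((f^{(j)}_\alpha)^*) = p^{-a_\alpha}(e^{(j)}_\alpha)^*$. Since on elementary tensors $\varphi^q$ on $M_i \otimes_W M_j^*$ acts as $\varphi_i^q$ on the first factor tensored with the dual Frobenius on the second, I deduce
\[
\varphi^q\bigl(f^{(i)}_\beta \otimes (f^{(j)}_\alpha)^*\bigr) = p^{b_\beta - a_\alpha}\, e^{(i)}_\beta \otimes (e^{(j)}_\alpha)^*.
\]
As both $\{f^{(i)}_\beta \otimes (f^{(j)}_\alpha)^*\}$ and $\{e^{(i)}_\beta \otimes (e^{(j)}_\alpha)^*\}$ are $W$-bases of the lattice $\Hom(M_j, M_i)$, the smallest $\ell \geq 0$ with $p^{\ell}\varphi^q(\Hom(M_j, M_i)) \subset \Hom(M_j, M_i)$ is the maximum of $0$ and $a_\alpha - b_\beta$ over all $(\alpha,\beta)$, which equals $\max\{0, a_{r_j} - b_1\} = \max\{0, \beta_{M_j}(q) - \alpha_{M_i}(q)\}$. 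Taking the further maximum over $q \in \mathbb{Z}_{>0}$ yields the claimed formula; finiteness of the supremum follows from $\lambda_j \leq \lambda_i$ together with \eqref{equation:alphabetafact1} and the boundedness of $\beta_{M_j}(q) - q\lambda_j$ and $q\lambda_i - \alpha_{M_i}(q)$ in $q$ for isoclinic latticed $F$-isocrystals.

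The main subtle point is the dual computation: dualization reverses the ordering of elementary divisors and flips their signs, so the dominant denominator coming from $M_j^*$ is controlled by $a_{r_j} = \beta_{M_j}(q)$ rather than by $a_1$. Matching this against the smallest elementary divisor $b_1 = \alpha_{M_i}(q)$ contributed by $M_i$ is precisely what produces the combination $\beta_{M_j}(q) - \alpha_{M_i}(q)$ as the worst-case denominator in the tensor product; once this alignment is verified, the remainder of the argument is clean bookkeeping with elementary divisors.
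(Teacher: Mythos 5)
Your proof is correct and is essentially the argument the paper defers to its references for (Vasiu's Proposition 4.4 and the author's thesis): diagonalize $\varphi_i^q$ and $\varphi_j^q$ via elementary divisors, observe that dualizing $M_j$ flips the signs so the extremal exponents pair up as $\beta_{M_j}(q)-\alpha_{M_i}(q)$, and use $\lambda_j\leq\lambda_i$ together with \eqref{equation:alphabetafact1} and Proposition \ref{proposition:computelii} for finiteness of the supremum. One cosmetic slip: for latticed $F$-isocrystals the inclusion $\varphi_i^q(M_i)\subset M_i$ need not hold, so you should apply the elementary divisor theorem to the pair of lattices $\varphi_i^q(M_i)$ and $M_i$ in $M_i[1/p]$, allowing the exponents $b_\beta$ (and $a_\alpha$) to be negative; the rest of your computation is unaffected.
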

\begin{proof}
This proposition is a generalization of \cite[Proposition 4.4]{Vasiu:reconstructing}, and can be proved in a similar way. For details, see \cite[Proposition 3.21]{Xiao:thesis}.
\end{proof}
The next proposition uses the previous two propositions to estimate the isomorphism number of a direct sum of isoclinic latticed $F$-isocrystals.

\begin{proposition}\label{proposition:estimateldirectsum}
Let $(M,\varphi) = \bigoplus_{i \in I}(M_i, \varphi_i)$ be a direct sum of two or more isoclinic latticed $F$-isocrystals. Then we have the following inequality:
\[n_M \leq \max\{1, n_{M_i}, n_{M_i}+n_{M_j}-1 \; | \; i,j \in I, i \neq j \}. \]
\end{proposition}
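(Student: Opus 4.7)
The plan is to reduce to a bound on the level torsion and then compute using Propositions \ref{proposition:computelii}, \ref{proposition:computelisoclinic}, and \ref{proposition:computeldirectsum}. Since $(M,\varphi)$ is a direct sum of isoclinic latticed $F$-isocrystals, Theorem \ref{theorem:vasiu1} gives $n_M = \ell_M$ and $n_{M_i} = \ell_{M_i}$ for every $i \in I$, so it is enough to prove the inequality with $\ell_M$ in place of $n_M$. Proposition \ref{proposition:computelisoclinic} writes $\ell_M$ as the maximum of $\epsilon_M \in \{0,1\}$ and of $\ell(j,i)$ over pairs $(j,i)$ with $\lambda_j \leq \lambda_i$. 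The term $\epsilon_M$ is absorbed by the $1$ on the right-hand side, and the diagonal contributions satisfy $\ell(i,i) = \ell_{M_i} = n_{M_i}$, so the entire argument comes down to bounding $\ell(j,i)$ by $n_{M_i}+n_{M_j}-1$ whenever $j \neq i$.

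Fix $j \neq i$ with $\lambda_j \leq \lambda_i$. By Proposition \ref{proposition:computeldirectsum}, $\ell(j,i) = \max\{0,\ \beta_{M_j}(q) - \alpha_{M_i}(q) \mid q \in \mathbb{Z}_{>0}\}$, so for each $q$ I would decompose
\[\beta_{M_j}(q) - \alpha_{M_i}(q) = \delta_{M_j}(q) + \bigl(\alpha_{M_j}(q) - \beta_{M_i}(q)\bigr) + \delta_{M_i}(q)\]
and estimate the three pieces separately. Proposition \ref{proposition:computelii} gives $\delta_{M_j}(q) \leq \ell_{M_j} = n_{M_j}$ and $\delta_{M_i}(q) \leq n_{M_i}$, while \eqref{equation:alphabetafact1} together with $\lambda_j \leq \lambda_i$ yields the sandwich $\alpha_{M_j}(q) \leq q\lambda_j \leq q\lambda_i \leq \beta_{M_i}(q)$; in particular the cross term is always $\leq 0$.

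The heart of the argument, and its main difficulty, is extracting an extra $-1$ from the cross term so as to replace the naive bound $n_{M_i}+n_{M_j}$ by $n_{M_i}+n_{M_j}-1$. If $\alpha_{M_j}(q) < \beta_{M_i}(q)$ then since both are integers the cross term is at most $-1$, giving $\beta_{M_j}(q) - \alpha_{M_i}(q) \leq n_{M_i}+n_{M_j}-1$. Otherwise $\alpha_{M_j}(q) = \beta_{M_i}(q)$, and the sandwich collapses to equalities $\alpha_{M_j}(q) = q\lambda_j = q\lambda_i = \beta_{M_i}(q)$; applying \eqref{equation:alphabetafact2} to $(M_j,\varphi_j)$ and to $(M_i,\varphi_i)$ then forces $\delta_{M_j}(q) = \delta_{M_i}(q) = 0$, so $\beta_{M_j}(q) - \alpha_{M_i}(q) = 0$. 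Hence in all cases $\beta_{M_j}(q) - \alpha_{M_i}(q) \leq \max\{0,\, n_{M_i}+n_{M_j}-1\}$, which gives the required bound on $\ell(j,i)$ and, combined with the diagonal and $\epsilon_M$ contributions, on $\ell_M = n_M$. The main obstacle is precisely this integer-valued dichotomy on the cross term: without \eqref{equation:alphabetafact2} ruling out the delicate borderline configuration, one only obtains $n_{M_i}+n_{M_j}$, so the classification fact \eqref{equation:alphabetafact2} is exactly the input that makes the proposition tight.
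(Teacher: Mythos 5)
Your proposal is correct and follows essentially the same route as the paper: reduce to the level torsion via Theorem \ref{theorem:vasiu1}, bound $\ell(j,i)$ for $j \neq i$ using the decomposition $\beta_{M_j}(q) - \alpha_{M_i}(q) = \delta_{M_j}(q) + (\alpha_{M_j}(q) - \beta_{M_i}(q)) + \delta_{M_i}(q)$ together with the sandwich from \eqref{equation:alphabetafact1}, and invoke \eqref{equation:alphabetafact2} to handle the borderline case. The only difference is organizational: the paper first proves the bound $\ell_{M_i}+\ell_{M_j}$ and then shows that equality forces $\ell_{M_i}=\ell_{M_j}=0$, whereas you run the dichotomy on the cross term directly; both hinge on exactly the same inputs.
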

\begin{proof}
This proposition is a generalization of \cite[Proposition 1.4.3]{Vasiu:reconstructing}.

As $(M,\varphi)$ is a direct sum of isoclinic latticed $F$-isocrystals, we have $n_M = \ell_M$ and $n_{M_i} = \ell_{M_i}$ for all $i \in I$ by Theorem \ref{theorem:vasiu1}. Hence it suffices to prove the proposition with all $n$ replaced by $\ell$. As $\epsilon_M \leq 1$, it suffices to show that $\ell(j,i) \leq \max\{0,\, \ell_{M_i} + \ell_{M_j} -1 \}$ if $j \neq i$ and $\lambda_j \leq \lambda_i$ by Proposition \ref{proposition:computelisoclinic}.  By Proposition \ref{proposition:computeldirectsum}, it suffices to prove that for all $q > 0$ we have
\begin{equation} \label{equation:estimateldirectsum1}
\beta_{M_j}(q) - \alpha_{M_i}(q) \leq \max\{0, \ell_{M_i}+\ell_{M_j}-1\}.
\end{equation}
As 
\begin{equation} \label{equation:estimateldirectsum2}
\alpha_{M_j}(q) \leq q \lambda_j  \leq q \lambda_i \leq \beta_{M_i}(q)
\end{equation}
 and $\delta_{M_i}(q) \leq \ell_{M_i}$, we have
\begin{align*}
\beta_{M_j}(q) - \alpha_{M_i}(q) &= \delta_{M_j}(q)+\alpha_{M_j}(q) + \delta_{M_i}(q) - \beta_{M_i}(q) \\ 
                               &\leq \ell_{M_j} + \ell_{M_i} + (\alpha_{M_j}(q) - \beta_{M_i}(q)) \\
                               &\leq \ell_{M_j} + \ell_{M_i}.
\end{align*}
In the case that the equality holds, necessarily $\alpha_{M_j}(q) - \beta_{M_i}(q) = 0$, whence $\alpha_{M_j}(q) = q\lambda_j = q\lambda_i = \beta_{M_i}(q)$ by \eqref{equation:estimateldirectsum2}. In particular, we have $\alpha_{M_j}(q) = \beta_{M_j}(q) = q\lambda_j$ as well as  $\alpha_{M_i}(q) = \beta_{M_i}(q) = q\lambda_i$ by \eqref{equation:alphabetafact2}. Therefore $\ell_{M_i} = 0 = \ell_{M_j}$ and $\beta_{M_j} - \alpha_{M_i} = \max\{0, \ell_{M_i}+\ell_{M_j}-1\}$, which proves \eqref{equation:estimateldirectsum1}.
\end{proof}

\subsection{Isoclinic ordinary $F$-crystals}

\begin{definition}
An $F$-crystal is called \emph{isoclinic ordinary} if its Hodge polygon is a straight line.  
\end{definition}

By Mazur's theorem \cite[Page 662, Lemma]{Mazur:FrobeniusHodge}, if the Hodge polygon is a straight line, then the Newton polygon, lying on or above the Hodge polygon with the same endpoints, is also a straight line. Thus isoclinic ordinary $F$-crystals are indeed isoclinic.

\begin{proposition} \label{proposition:n=0}
An $F$-crystal $(M, \varphi)$ is isoclinic ordinary if and only if $n_M = 0$.
\end{proposition}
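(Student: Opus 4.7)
The plan is to prove the two implications separately.

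For the implication isoclinic ordinary $\Rightarrow n_M = 0$: since the Hodge polygon is a straight line, all Hodge slopes equal a common integer $e \in \mathbb{Z}_{\geq 0}$, and by Mazur's theorem the Newton polygon coincides with the Hodge polygon, so $(M, \varphi)$ is isoclinic with unique Newton slope $\lambda = e$. From $M/\varphi(M) \cong (W/p^e W)^r$ the structure theorem for finitely generated $W$-modules yields $\varphi(M) = p^e M$, and iterating gives $\varphi^q(M) = p^{qe} M$ for every $q \geq 1$. Hence $\alpha_M(q) = \beta_M(q) = q\lambda$ and $\delta_M(q) = 0$ for all $q \in \mathbb{Z}_{>0}$. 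Proposition \ref{proposition:computelii} then gives $\ell_M = 0$, and Theorem \ref{theorem:vasiu1} applied to the isoclinic $F$-crystal $(M, \varphi)$ yields $n_M = \ell_M = 0$.

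For the converse $n_M = 0 \Rightarrow$ isoclinic ordinary: the strategy is to exhibit two specific elements $g, g' \in \GL(M)$ such that $(M, g\varphi)$ and $(M, g'\varphi)$ realize the two extreme Newton polygons compatible with the Hodge polygon, and then use $n_M = 0$ to force these polygons to coincide. Applying Smith normal form to $\varphi(M) \subset M$, choose $W$-bases $w_1, \ldots, w_r$ and $v_1, \ldots, v_r$ of $M$ with $\varphi(v_i) = p^{e_i} w_i$, where $e_1 \leq \cdots \leq e_r$ are the Hodge slopes of $(M, \varphi)$. Define $g \in \GL(M)$ by $g(w_i) = v_i$ and $g' \in \GL(M)$ by $g'(w_i) = v_{i+1 \bmod r}$. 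Then $g\varphi(v_i) = p^{e_i} v_i$, so $(M, g\varphi)$ splits as a direct sum of rank-one isoclinic $F$-crystals of Newton slopes $e_1, \ldots, e_r$, and its Newton polygon equals the Hodge polygon of $(M, \varphi)$. On the other hand $g'\varphi(v_i) = p^{e_i} v_{i+1}$, and iterating $r$ times yields $(g'\varphi)^r(v_i) = p^s v_i$ with $s := \sum_{i=1}^r e_i$; hence $(M, g'\varphi)$ is isoclinic with every Newton slope equal to $s/r$, and its Newton polygon is the straight line from $(0, 0)$ to $(r, s)$. Since $n_M = 0$, both $(M, g\varphi)$ and $(M, g'\varphi)$ are isomorphic to $(M, \varphi)$ and therefore share its Newton polygon; this forces the Hodge polygon of $(M, \varphi)$ to equal the straight line from $(0, 0)$ to $(r, s)$, so $(M, \varphi)$ is isoclinic ordinary.

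The only technical point worth highlighting is the simultaneous basis choice $\{v_i\}, \{w_i\}$ with $\varphi(v_i) = p^{e_i} w_i$: first pick $w_1, \ldots, w_r \in M$ so that $p^{e_1} w_1, \ldots, p^{e_r} w_r$ is a $W$-basis of $\varphi(M)$ reflecting the elementary divisors of $\varphi(M) \subset M$, then lift this basis through the $\sigma$-linear bijection $\varphi \colon M \xrightarrow{\sim} \varphi(M)$ to obtain the basis $v_1, \ldots, v_r$ of $M$. Once this is in place, the constructions of $g$ and $g'$ together with the two extremal Newton polygon computations are essentially formal.
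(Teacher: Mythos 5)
Your proof is correct, but both halves take a genuinely different route from the paper's. For the forward implication, the paper first rescales so that the common Hodge slope is $0$ (Remark \ref{remark:rescale}), observes that $g\varphi(M) = M$ for every $g \in \GL(M)$, and then applies Lemma \ref{lemma:n=0} (Fontaine's triviality of unit-root $F$-crystals over $\bar{k}$) to conclude directly that $(M, g\varphi) \cong (M, \varphi)$; you instead compute $\varphi^q(M) = p^{qe}M$, deduce $\ell_M = 0$ from Proposition \ref{proposition:computelii}, and invoke Theorem \ref{theorem:vasiu1}. Your route is valid (indeed the inequality $n_M \leq \ell_M$ already suffices), but it leans on Vasiu's Main Theorem A, the deepest external input of the paper, where the paper's argument needs only the much softer Lemma \ref{lemma:n=0}. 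For the converse, the paper simply cites \cite[Lemma 2.3]{Vasiu:reconstructing}, whereas you give a self-contained and rather pleasant argument: the twists $g$ and $g'$ realize respectively the Hodge polygon (as the Newton polygon of the split form $g\varphi(v_i) = p^{e_i}v_i$) and the straight line from $(0,0)$ to $(r,s)$ (as the Newton polygon of the cyclic form $g'\varphi(v_i) = p^{e_i}v_{i+1}$), and $n_M = 0$ forces these to coincide since $g \equiv g' \equiv 1 \bmod p^0$ vacuously. All the supporting details check out: the simultaneous bases with $\varphi(v_i) = p^{e_i}w_i$ are exactly those used in Lemma \ref{lemma:mainlemma1}, the identity $(g'\varphi)^r(v_i) = p^s v_i$ is Remark \ref{remark:permuisquasi}(2), and isomorphic $F$-crystals share their Newton polygon. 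So your argument is complete and has the merit of making the converse explicit rather than a citation, at the cost of a heavier tool in the forward direction.
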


\begin{lemma} \label{lemma:n=0}
Let $(M, \varphi)$ be an $F$-crystal of rank $r$ such that $\varphi(M) = M$. Then there is a $W$-basis $\{v_1, v_2, \dots, v_r\}$ of $M$ such that $\varphi(v_i) = v_i$ for $i = 1,2,\dots,r$. 
\end{lemma}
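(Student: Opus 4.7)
The plan is to produce the fixed basis by $p$-adic successive approximation starting from a basis of fixed vectors modulo $p$. First I would examine the reduction $\bar M := M/pM$, an $r$-dimensional $k$-vector space equipped with the $\sigma$-linear map $\bar\varphi$ induced by $\varphi$; bijectivity of $\bar\varphi$ follows from $\varphi(M) = M$ by a dimension count. Fixing any $W$-basis of $M$ and letting $A \in \GL_r(k)$ denote the matrix of $\bar\varphi$ in the reduced basis, finding a $k$-basis of $\bar M$ consisting of $\bar\varphi$-fixed vectors is equivalent to producing $g \in \GL_r(k)$ with $A\sigma(g) = g$, i.e.\ $A = g\sigma(g)^{-1}$. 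The existence of such $g$ is precisely Lang's theorem applied to $\GL_r$ over the algebraically closed field $k$. This yields $\bar v_1, \ldots, \bar v_r \in \bar M$ with $\bar\varphi(\bar v_i) = \bar v_i$ which form an $\mathbb{F}_p$-basis of $\bar M^{\bar\varphi}$ and a $k$-basis of $\bar M$.

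Next I would lift these fixed vectors to $M$ by induction. Choose arbitrary lifts $v_i^{(0)} \in M$ of $\bar v_i$; by Nakayama's lemma they form a $W$-basis of $M$. Assume inductively that $v_i^{(n)} \in M$ has been constructed so that the $v_i^{(n)}$ form a $W$-basis and satisfy $\varphi(v_i^{(n)}) \equiv v_i^{(n)} \pmod{p^{n+1}}$. Writing $\varphi(v_i^{(n)}) - v_i^{(n)} = p^{n+1}w_i$ with $w_i \in M$ and seeking a correction $v_i^{(n+1)} := v_i^{(n)} + p^{n+1}u_i$, a direct expansion using the $\sigma$-linearity of $\varphi$ (so that $\varphi(p^{n+1}u_i) = p^{n+1}\varphi(u_i)$) shows that the desired congruence mod $p^{n+2}$ reduces to $\varphi(u_i) - u_i \equiv -w_i \pmod p$, i.e.\ $(\bar\varphi - 1)(\bar u_i) = -\bar w_i$ in $\bar M$. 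In the basis $\bar v_1, \ldots, \bar v_r$ the map $\bar\varphi$ sends $\sum_j a_j \bar v_j$ to $\sum_j a_j^p \bar v_j$, so $\bar\varphi - 1$ acts componentwise as the Artin--Schreier map $c \mapsto c^p - c$, which is surjective because $k$ is algebraically closed. Hence the required $\bar u_i$ and any lift $u_i \in M$ exist, completing the inductive step.

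Finally, the sequences $(v_i^{(n)})_n$ are $p$-adically Cauchy and converge to vectors $v_i \in M$ satisfying $\varphi(v_i) = v_i$. Since the reductions $\bar v_i$ remain a $k$-basis of $\bar M$, Nakayama's lemma implies that $\{v_1, \ldots, v_r\}$ is a $W$-basis of $M$, as required. The main obstacle is the base case, namely producing a $k$-basis of $\bar M$ by $\bar\varphi$-fixed vectors; all of the content of Lang's theorem for $\GL_r$ over an algebraically closed field of characteristic $p$ is used here, and without invoking it one would have to solve a system of Artin--Schreier-type polynomial equations directly.
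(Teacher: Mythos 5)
Your proof is correct, but it takes a genuinely different route from the paper. The paper disposes of the lemma in one stroke by citing Fontaine's equivalence of categories for \'etale $\varphi$-modules (\cite[A.1.2.6]{Fontaine:padicrep1}): since $k$ is algebraically closed the relevant Galois group is trivial, and the counit of the adjunction ${\bf D}_{\mathcal E}{\bf V}_{\mathcal E}(M) \cong M$ says precisely that $M = W \otimes_{\mathbb{Z}_p} M^{\varphi = 1}$. You instead give the classical self-contained argument: trivialize $\bar\varphi$ on $M/pM$ and then lift a fixed basis by $p$-adic successive approximation, with the inductive step reduced to surjectivity of the Artin--Schreier map on $k$. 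Both steps are sound --- in particular $\varphi(p^{n+1}u_i) = p^{n+1}\varphi(u_i)$ because $\sigma$ fixes $p$, and $\varphi$ is $p$-adically continuous since $\varphi(p^nM) = p^nM$, so passing to the limit and applying Nakayama is legitimate. Your approach is more elementary and makes visible exactly where algebraic closedness of $k$ enters (once for the base case, once per lifting step), whereas the paper's citation buys brevity at the cost of invoking a much heavier equivalence of categories. The only point worth flagging is your appeal to Lang's theorem: as classically stated it concerns groups over $\overline{\mathbb{F}}_q$, whereas here $k$ is an arbitrary algebraically closed field of characteristic $p$. The statement you need --- surjectivity of $g \mapsto g\,\sigma(g)^{-1}$ on $\GL_r(k)$, equivalently that a bijective $\sigma$-linear endomorphism of a finite-dimensional $k$-vector space admits a basis of fixed vectors --- does hold in this generality (Lang's argument uses only that the Frobenius has vanishing differential, and the fact is standard in the theory of unit-root $F$-crystals), but you should either cite it in that form or note, as you do at the end, that it amounts to solving an Artin--Schreier system directly.
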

\begin{proof}
The lemma is an easy consequence of \cite[A.1.2.6]{Fontaine:padicrep1}. Using the same notation as \cite{Fontaine:padicrep1}, as $E = k$ is algebraically closed, we know that $\widehat{\mathcal{E}}_{\textrm{nr}} = \mathcal{E}_{\textrm{nr}} = \mathcal{E} = K_0$, whence the ring of integers $\mathcal{O}_{\widehat{\mathcal{E}}_{\textrm{nr}}} = \mathcal{O}_{\mathcal{E}} = W$. The Galois group $G_E = G_k$ is trivial as $k$ is algebraically closed. Let $M_0 := \{x \in M \; | \; \varphi(x) = x\}$ be the $\mathbb{Z}_p$-submodule of $M$ that contains all the elements fixed by $\varphi$. Applying the compositon of functors ${\bf D}_{\mathcal{E}} {\bf V}_{\mathcal{E}}$ to $(M, \varphi)$, we get that 
\[{\bf D}_{\mathcal{E}} {\bf V}_{\mathcal{E}} (M) = {\bf D}_{\mathcal{E}}((W \otimes_W M)_{\varphi = 1}) = {\bf D}_{\mathcal{E}}(M_0) = W \otimes_{\mathbb{Z}_p} M_0.\]
We know that $M = {\bf D}_{\mathcal{E}} {\bf V}_{\mathcal{E}} (M)$ by \cite[A.1.2.6]{Fontaine:padicrep1}, and thus $M = W \otimes_{\mathbb{Z}_p} M_0$. So choosing a $\mathbb{Z}_p$-basis of $M_0$ gives the desired result.
\end{proof}

\begin{proof}[Proof of Proposition \ref{proposition:n=0}]
Suppose $(M, \varphi)$ is isoclinic ordinary, we can assume that the Hodge polygon has slope $0$, namely $\varphi(M) = M$, by Remark \ref{remark:rescale}. For each $g \in \GL(M)$, we have $g\varphi(M) = M$, hence the Hodge polygon of the $F$-crystal $(M, g\varphi)$ is also a straight line of slope $0$. By Lemma \ref{lemma:n=0}, we get that $(M, g\varphi) \cong (M, \varphi)$ and thus $n_M=0$.

The converse is \cite[Lemma 2.3]{Vasiu:reconstructing}.
\end{proof}

\begin{corollary} \label{corollary:directsumordinary}
If $(M, \varphi)$ is a direct sum of two or more isoclinic ordinary $F$-crystals of distinct Hodge slopes, then $n_M = 1$.
\end{corollary}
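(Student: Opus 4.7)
The plan is to obtain the upper bound $n_M \leq 1$ from Proposition~\ref{proposition:estimateldirectsum} and the lower bound $n_M \geq 1$ from Proposition~\ref{proposition:n=0}, by verifying that $(M,\varphi)$ itself is not isoclinic ordinary.

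First I would write $(M, \varphi) = \bigoplus_{i \in I} (M_i, \varphi_i)$ as a direct sum of isoclinic ordinary $F$-crystals whose Hodge slopes $\lambda_1, \ldots, \lambda_{|I|}$ are pairwise distinct. By Proposition~\ref{proposition:n=0} applied to each summand, we have $n_{M_i} = 0$ for every $i \in I$. Proposition~\ref{proposition:estimateldirectsum} then gives
\[
n_M \;\leq\; \max\{1,\, n_{M_i},\, n_{M_i}+n_{M_j}-1 \mid i,j\in I,\ i\neq j\} \;=\; \max\{1,\,0,\,-1\} \;=\; 1.
\]

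For the reverse inequality, I would argue that $(M,\varphi)$ cannot itself be isoclinic ordinary. The Hodge polygon of $(M,\varphi)$ is obtained by concatenating the Hodge polygons of the summands $(M_i,\varphi_i)$ arranged in increasing order of slope; since each summand contributes a nontrivial straight segment of slope $\lambda_i$ and the $\lambda_i$ are distinct, the resulting Hodge polygon has at least two distinct slopes and hence is not a straight line. Therefore $(M,\varphi)$ is not isoclinic ordinary, and by the converse direction of Proposition~\ref{proposition:n=0} we conclude $n_M \neq 0$, i.e. $n_M \geq 1$. Combining the two bounds yields $n_M = 1$.

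I do not anticipate any serious obstacle: both ingredients (the upper bound from the direct-sum estimate and the characterization of $n_M = 0$) are already in place, and the only subtle point is noting that distinct Hodge slopes among the summands force the combined Hodge polygon of $M$ to have multiple slopes, which is immediate from the concatenation description.
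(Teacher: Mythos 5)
Your proof is correct and follows essentially the same route as the paper: the upper bound $n_M \leq 1$ from Proposition~\ref{proposition:estimateldirectsum} combined with $n_{M_i}=0$, and the lower bound by observing that $(M,\varphi)$ is not isoclinic ordinary so Proposition~\ref{proposition:n=0} rules out $n_M=0$. The paper leaves the "not isoclinic ordinary" step implicit as an immediate contradiction, whereas you spell out the concatenation of Hodge polygons; this is a harmless elaboration of the same argument.
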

\begin{proof}
By Proposition \ref{proposition:estimateldirectsum}, we have $n_M \leq 1$. If $n_M = 0$, then $(M, \varphi)$ is isoclinic ordinary by Proposition \ref{proposition:n=0}, which is a contradiction. Therefore $n_M = 1$.
\end{proof}

\section{Proof of Theorem \ref{theorem:estimate_n_elementary}} \label{section:proof}

Before we prove Theorem \ref{theorem:estimate_n_elementary}, we recall a lemma about the interrelation between the smallest Newton slope of an $F$-crystal and the smallest Hodge slope of the iterates of the $F$-crystal.

\begin{lemma}\label{lemma:sharpestimate}
Let $(M,\varphi)$ be an $F$-crystal, and let $\lambda \geq 0$ be a rational number. Let $h_0, h_1, \dots$ be the Hodge numbers of $(M,\varphi)$. Then all Newton slopes of $(M,\varphi)$ are greater than or equal to $\lambda$ if and only if for all integers $n> 0$, we have $\alpha_M(n+\sum_{i<\lambda}h_i) \geq \lceil n\lambda \rceil$.
\end{lemma}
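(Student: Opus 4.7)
For the easy direction ($\Leftarrow$), I would use the formula $\lambda_1 = \lim_{q \to \infty} \alpha_M(q)/q$ for the smallest Newton slope $\lambda_1$ of $(M, \varphi)$, which follows from the superadditivity of $\alpha_M$ (an immediate consequence of $\varphi(M) \subset M$) via Fekete's lemma, combined with the explicit formula $\alpha_{E_{a/d}}(qd) = qa$ on the simple isocrystal $E_{a/d}$ and the Dieudonn\'e--Manin decomposition of $(M[1/p], \varphi)$. Dividing $\alpha_M(n + n_0) \geq \lceil n\lambda \rceil$ by $n + n_0$ and letting $n \to \infty$ yields $\lambda_1 \geq \lambda$, which is equivalent to every Newton slope of $(M, \varphi)$ being $\geq \lambda$.

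For the hard direction ($\Rightarrow$), assume every Newton slope of $(M, \varphi)$ is $\geq \lambda$. I would first settle the isoclinic case: if $(M, \varphi)$ is isoclinic of slope $\lambda_0 = a_0/d_0 \geq \lambda$ with $\gcd(a_0, d_0) = 1$, then $(M[1/p], \varphi) \cong E_{\lambda_0}^{r/d_0}$, so $\varphi^{d_0}$ acts on $M[1/p]$ as the scalar $p^{a_0}$ (up to a $\sigma^{d_0}$-twist), forcing $\varphi^{d_0}(M) = p^{a_0} M$. Writing $q = q' d_0 + s$ with $0 \leq s < d_0$, this gives $\alpha_M(q) = q' a_0 + \alpha_M(s)$, and the required inequality $\alpha_M(q) \geq \lceil (q - n_0)\lambda \rceil$ reduces to a case-by-case estimate of $\alpha_M(s)$ for $0 \leq s < d_0$; the integer $n_0 = \sum_{i < \lambda} h_i$ precisely absorbs the deficit coming from those Hodge slopes of $M$ that lie below $\lambda$.

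For general (non-isoclinic) $(M, \varphi)$, I would use the slope filtration $0 = M_0 \subset M_1 \subset \cdots \subset M_s = M$ over $k = \bar{k}$, where each $M_i$ is $\varphi$-stable and saturated and each quotient $M_i/M_{i-1}$ is isoclinic of Newton slope $\mu_i \geq \lambda$. Applying the isoclinic bound to each successive quotient and then gluing via the filtration should produce the desired lower bound on $\alpha_M(q)$. The main obstacle is this gluing step: the Hodge numbers of the successive quotients $M_i/M_{i-1}$ relate to the global Hodge numbers $h_i$ of $M$ in a non-trivial way, and controlling the discrepancy will require careful bookkeeping together with Mazur's inequality applied to $\varphi^q$ on $M$ and to its restrictions on the slope-filtration pieces. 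An alternative worth trying is a direct trajectory analysis: for $x \in M \setminus pM$, set $x_0 := x$, $x_{k+1} := \varphi(x_k)/p^{\mathrm{ord}_p(\varphi(x_k))}$, and bound $\mathrm{ord}_p(\varphi^q(x)) = \sum_{k<q} \mathrm{ord}_p(\varphi(x_k))$ from below by exploiting that the $W$-submodule generated by $\{x_k : k \geq 0\}$ is a sub-$F$-crystal of $(M, \varphi)$ whose Newton slopes are all $\geq \lambda$.
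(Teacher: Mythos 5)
Your ``$\Leftarrow$'' direction is sound (and can even be shortened: Mazur's inequality applied to $\varphi^q$ gives $\alpha_M(q)\leq q\lambda_1$ directly, so dividing the hypothesis by $n+\sum_{i<\lambda}h_i$ and letting $n\to\infty$ already yields $\lambda_1\geq\lambda$ without needing the full limit formula). Note that the paper itself offers no proof of this lemma; it is quoted from Katz's sharp slope estimate \cite[Section 1.5]{Katz:slopefiltration}, where the hard direction is established by a global argument rather than by reduction to the isoclinic case.

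The ``$\Rightarrow$'' direction, however, contains a fatal error at its base case. For an isoclinic $F$-crystal of slope $a_0/d_0$, the isomorphism $(M[1/p],\varphi)\cong E_{a_0/d_0}^{r/d_0}$ is only an isomorphism of $F$-\emph{isocrystals}: the lattice $M$ need not be adapted to it, and the conclusion $\varphi^{d_0}(M)=p^{a_0}M$ is false in general. That identity is precisely the defining condition of an isoclinic \emph{quasi-special} $F$-crystal (Definition \ref{definition:quasi-special}), which is a genuinely restrictive hypothesis; for instance, for the isoclinic rank-$3$ crystal with Hodge slopes $0,1,5$ and Newton slope $2$ of Example \ref{example:est_n_ele_not_optimal} one has $d_0=1$, $a_0=2$, and $\varphi(M)=p^2M$ would force all Hodge slopes to equal $2$. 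The entire difficulty of the lemma (and indeed of the paper, whose main theorem bounds $\delta_M(q)=\beta_M(q)-\alpha_M(q)$ for isoclinic crystals that are \emph{not} quasi-special) lives exactly in the case your argument dismisses. The subsequent reduction of the general case to the isoclinic one via the slope filtration is also only a plan: as you acknowledge, the Hodge numbers of the graded pieces do not match the global $h_i$ in any controlled way, and the ``careful bookkeeping'' you defer is where a correct proof would have to do real work. As it stands, the hard direction is not proved.
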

\begin{proof}
See \cite[Section 1.5]{Katz:slopefiltration}.
\end{proof}

We set some notations that will be useful in the proof of Theorem \ref{theorem:estimate_n_elementary}. Let $(M, \varphi)$ be an $F$-crystal. By definition, we have $p^e M \subset \varphi(M)$ where $e:=\beta_M(1)$. Thus $\varphi(M^*) \subset p^{-e}M^*$, i.e. $p^{e} \varphi(M^*) \subset M^*$ (Recall that $(M^*, \varphi)$ is the dual of $(M, \varphi)$ and is not an $F$-crystal if $e > 0$). Therefore $(M',\varphi'):= (M^*, p^{e}\varphi)$ is an $F$-crystal.

\begin{remark} \label{remark:doubleduality}
As the isomorphism number of $(M, \varphi)$ is equal to the isomorphism number of $(M^*, \varphi)$, and the isomorphism number of $(M^*, \varphi)$ is equal to the isomorphism number of $(M^*, p^e\varphi)$ by Remark \ref{remark:rescale}, the isomorphism number of $(M, \varphi)$ is equal to the isomorphism number of $(M', \varphi')$.
\end{remark}

\begin{lemma} \label{lemma:mainlemma1}
Let $(M, \varphi)$ be an $F$-crystal with $\alpha_M(1) = 0$ and $e:=\beta_M(1) >0$. Let $(M',\varphi')$ be as above. If $h_i$ and $h_i'$ are the Hodge numbers of $(M, \varphi)$ and $(M', \varphi')$ respectively, then for any $\lambda \in (0, e)$, we have 
\[\sum_{i<e - \lambda} h_i' = \sum_{i > \lambda} h_i.\]
\end{lemma}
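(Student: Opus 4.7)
The plan is to identify the Hodge numbers $h_i'$ of $(M',\varphi') = (M^*, p^e\varphi)$ explicitly with those of $(M,\varphi)$; once this is done, the identity reduces to a straightforward reindexing of the sum. More precisely, I would establish the symmetry $h_i' = h_{e-i}$ for every integer $i \geq 0$. Equivalently, if $e_1 \leq e_2 \leq \cdots \leq e_r$ are the Hodge slopes of $(M,\varphi)$ (so $e_1 = 0$ and $e_r = e$ under our hypotheses), the Hodge slopes of $(M',\varphi')$ form the multiset $\{e - e_j : 1 \leq j \leq r\}$.

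To establish this, I fix a $W$-basis $v_1, \ldots, v_r$ of $M$ and let $A \in M_r(W)$ be the matrix defined by $\varphi(v_j) = \sum_i A_{ij} v_i$; since $\varphi(M) = A W^r \subset W^r = M$ in these coordinates, the elementary divisors of $A$ are precisely $p^{e_1}, \ldots, p^{e_r}$. A direct calculation from the definition $\varphi_{\mathrm{dual}}(f) = \sigma \circ f \circ \varphi^{-1}$ shows that, with respect to the dual basis $v_1^*, \ldots, v_r^*$ of $M^*$, the matrix of $\varphi_{\mathrm{dual}}$ equals $(A^{-1})^T$; hence the matrix of $\varphi' = p^e \varphi_{\mathrm{dual}}$ is $p^e (A^{-1})^T$. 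Since elementary divisors are invariant under transposition and inversion negates the exponents, the elementary divisors of $p^e (A^{-1})^T$ are $p^{e - e_1}, \ldots, p^{e - e_r}$, all non-negative because $e_j \leq e$. These are by definition the Hodge slopes of $(M',\varphi')$, so $h_i' = h_{e-i}$ as claimed.

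With the symmetry in hand, the substitution $k = e - i$ yields
\[
\sum_{i < e - \lambda} h_i' \;=\; \sum_{i < e - \lambda} h_{e-i} \;=\; \sum_{k > \lambda} h_k,
\]
where the boundary adjustments are harmless since $h_k = 0$ for $k > e$ and $h_i' = 0$ for $i > e$. The only subtle point is keeping track of the interaction between $\sigma$-linearity and dualization when computing the matrix of $\varphi_{\mathrm{dual}}$, but since the elementary divisors of $M/\varphi(M)$ depend only on $\varphi(M)$ as a $W$-submodule of $M$, this is a routine verification.
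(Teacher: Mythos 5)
Your proposal is correct and takes essentially the same route as the paper: both arguments show that the Hodge slopes of $(M',\varphi')$ are exactly $\{e-e_j \;|\; 1 \leq j \leq r\}$ (the paper via bases with $\varphi(v_i)=p^{e_i}w_i$, you via the Smith normal form of the matrix of $\varphi$ and the identification of the dual matrix as $(A^{-1})^T$), and then conclude by the same reindexing. The $\sigma$-linearity issue you flag is indeed harmless for exactly the reason you give.
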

\begin{proof}
 Let $0 = e_1 \leq e_2 \leq \cdots \leq e_r = e$ be the Hodge slopes of $(M, \varphi)$. Then there are $W$-bases $\{v_1, \dots, v_r\}$ and $\{w_1, \dots, w_r\}$ of $M$ such that $\varphi(v_i) = p^{e_i}w_i$ for all $i =1,2,\dots,r$. Let $\{v_1^*, \dots, v_r^*\}$ and $\{w_1^*, \dots, w_r^*\}$ be the corresponding dual $W$-basis of $M^* = M'$, so $\varphi(v_i^*) = p^{-e_i}w_i^*$. Multiplying by $p^e$, we have $\varphi'(v_i^*) = p^e\varphi(v_i^*) = p^{e-e_i}w_i^*$. This means that 
\[0=e-e_r \leq e-e_{r-1} \leq \cdots \leq e-e_1=e\] 
are the Hodge slopes of $(M', \varphi')$. For $j =1,2,\dots,r$, set $e_{r-j+1}' := e_r-e_j$, i.e. $e_j + e'_{r-j+1} = e_r$. If $e_j > \lambda$, then $e'_{r-j+1} = e_r - e_j < e - \lambda$; if $e'_i < e_r - \lambda$, then $e_{r-i+1} = e_r - e'_i > e_r - (e_r - \lambda) = \lambda$. This describes a bijection between the sets $\{e_j \; | \; e_j > \lambda \}$ and $\{e'_i \; | \; e'_i < e - \lambda \}$, whence the lemma.
\end{proof}

\begin{lemma} \label{lemma:mainlemma2}
Let $(M, \varphi)$ be an $F$-crystal and let $(M', \varphi')$ be as above. Suppose $\alpha_M(1)=0$ and set $e:=\beta_M(1)$. Then for $q =1,2,\dots$, we have 
\[\alpha_M(q) + \beta_{M'}(q) = qe = \alpha_{M'}(q) + \beta_M(q).\]
\end{lemma}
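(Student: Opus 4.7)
The plan is to mimic the proof of Lemma~\ref{lemma:mainlemma1} but with $\varphi$ replaced by $\varphi^q$, extracting a Smith-type decomposition for $\varphi^q$ and then dualizing.

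First I would observe that since $\varphi$ is $\sigma$-linear and $\sigma(p)=p$, iterating gives $(\varphi')^q = (p^e\varphi)^q = p^{eq}\varphi^q$ on $M'=M^*$. So the lemma reduces to comparing the extremal $p$-powers in the Smith normal form of $\varphi^q: M\to M$ with those of $\varphi^q: M^*\to M^*$.

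Next, I would apply the elementary divisor theorem to $\varphi^q$: there exist $W$-bases $\{V_1,\ldots,V_r\}$ and $\{W_1,\ldots,W_r\}$ of $M$ and integers $f_1\leq f_2\leq\cdots\leq f_r$ with $\varphi^q(V_i)=p^{f_i}W_i$. By the very definitions of $\alpha_M$ and $\beta_M$, we have $\alpha_M(q)=f_1$ and $\beta_M(q)=f_r$. Let $\{V_i^*\}$ and $\{W_i^*\}$ be the corresponding dual bases of $M^*$. Using the formula $\varphi^q(f)(x)=\sigma^q(f(\varphi^{-q}(x)))$ together with $\varphi^{-q}(W_j)=p^{-f_j}V_j$, I would compute
\[
\varphi^q(V_i^*)(W_j)=\sigma^q\bigl(V_i^*(p^{-f_j}V_j)\bigr)=p^{-f_j}\delta_{ij}=p^{-f_i}\delta_{ij},
\]
so $\varphi^q(V_i^*)=p^{-f_i}W_i^*$. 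Multiplying by $p^{eq}$ gives $(\varphi')^q(V_i^*)=p^{eq-f_i}W_i^*$.

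Finally, since $eq-f_r\leq\cdots\leq eq-f_1$, the definitions of $\alpha_{M'}$ and $\beta_{M'}$ applied to the Smith form of $(\varphi')^q$ yield $\alpha_{M'}(q)=eq-f_r=eq-\beta_M(q)$ and $\beta_{M'}(q)=eq-f_1=eq-\alpha_M(q)$. Rearranging gives the two claimed identities $\alpha_M(q)+\beta_{M'}(q)=qe=\alpha_{M'}(q)+\beta_M(q)$.

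There is no real obstacle here: the only mildly delicate point is verifying that the dual-basis computation goes through exactly as in Lemma~\ref{lemma:mainlemma1}, which works because the pair $\{V_i\},\{W_i\}$ diagonalizes $\varphi^q$ in the same way that $\{v_i\},\{w_i\}$ diagonalize $\varphi$, and because $\sigma^q$ fixes the powers of $p$ that appear.
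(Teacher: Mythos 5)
Your proof is correct, but it takes a genuinely different route from the paper's. The paper argues softly with lattice inclusions: starting from $p^{\beta_M(q)}M \subset \varphi^q(M) \subset p^{\alpha_M(q)}M$, it dualizes and multiplies by $p^{qe}$ to get $\beta_{M'}(q) \leq qe - \alpha_M(q)$ and $\alpha_{M'}(q) \geq qe - \beta_M(q)$, then runs the same argument starting from $M'$ to obtain the reverse inequalities, and concludes equality by sandwiching — no choice of basis is ever made. You instead invoke the elementary divisor theorem for $\varphi^q$ (which applies since $\varphi^q$ is an injective $\sigma^q$-linear endomorphism and $\sigma^q$ is a ring automorphism of $W$, so the $V_i$ with $\varphi^q(V_i)=p^{f_i}W_i$ do form a $W$-basis), identify $\alpha_M(q)=f_1$ and $\beta_M(q)=f_r$, and compute the dual action explicitly; this is exactly the computation of Lemma \ref{lemma:mainlemma1} promoted from $q=1$ to general $q$. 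Your dual-basis calculation $\varphi^q(V_i^*)=p^{-f_i}W_i^*$ is right, as is $(\varphi')^q=p^{qe}\varphi^q$ (using $\sigma(p)=p$). What your approach buys is strictly more information: it shows that the entire Hodge slope sequence of $(M',(\varphi')^q)$ is $\{qe-f_r\leq\cdots\leq qe-f_1\}$, not just the two extremes, so it subsumes Lemma \ref{lemma:mainlemma1} as well. What the paper's approach buys is economy: it never needs the Smith normal form of the iterate, only the defining inclusions, which is why it reads as four lines of containments.
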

\begin{proof}
By the definitions of $\alpha_M(q)$ and $\beta_M(q)$, we have
\[p^{\beta_M(q)} M \subset \varphi^{q}(M) \subset p^{\alpha_M(q)} M,\] 
and thus
\[p^{-\alpha_M(q)}M' \subset \varphi^q(M') \subset p^{-\beta_M(q)}M'.\] 
Multiplying by $p^{qe}$, we have 
\[p^{qe - \alpha_M(q)}M' \subset (p^{e}\varphi)^q(M') \subset p^{qe - \beta_M(q)}M',\] 
hence
\begin{equation} \label{equation:alphabeta1}
\beta_{M'}(q) \leq qe-\alpha_M(q) \quad \textrm{and} \quad \alpha_{M'}(q) \geq qe-\beta_M(q).
\end{equation}
Again by the definitions of $\alpha_{M'}(q)$ and $\beta_{M'}(q)$, we have
\[p^{\beta_{M'}(q)}M' \subset (\varphi')^q(M') \subset p^{\alpha_{M'}(q)}M',\]
and thus
\[p^{-\alpha_{M'}(q)}M \subset (\varphi')^q(M) \subset p^{-\beta_{M'}(q)}M,\]
that is
 \[p^{-\alpha_{M'}(q)}M \subset p^{-qe}\varphi^q(M) \subset p^{-\beta_{M'}(q)}M.\]
Multiplying by $p^{qe}$, we obtain 
\[p^{qe - \alpha_{M'}(q)}M \subset \varphi^q(M') \subset p^{qe - \beta_{M'}(q)}M,\]
and hence
\begin{equation} \label{equation:alphabeta2}
\alpha_M(q) \geq qe - \beta_{M'}(q) \quad \textrm{and} \quad \beta_M(q) \leq qe-\alpha_{M'}(q).
\end{equation} 
Lemma \ref{lemma:mainlemma2} is now clear by inequalities \eqref{equation:alphabeta1}, \eqref{equation:alphabeta2}.
\end{proof}

\begin{proof}[\textbf{Proof of Theorem \ref{theorem:estimate_n_elementary}}]
If $e=0$, then $\lambda=0$ and the $F$-crystal $(M, \varphi)$ is isoclinic ordinary. By Proposition \ref{proposition:n=0}, we get $n_M=0$. In this case, inequality $\eqref{equation:main}$ is in fact an equality as both sides are equal to $0$. Now we can assume that $e>0$ and thus $\lambda<e$.

To ease notation, let $l_1 = \sum_{i<\lambda}h_i$ and $l_2 = \sum_{i>\lambda}h_i$. To prove the inequality \eqref{equation:main}, it suffices to prove that 
\begin{equation} \label{equation:main2}
\ell_M \leq el_2+(l_1-l_2)\lambda
\end{equation}
by Theorem \ref{theorem:vasiu1}. By Proposition \ref{proposition:computelii}, it suffices to prove that for all $q \in \mathbb{Z}_{>0}$,
\begin{equation} \label{equation:main1}
\delta_M(q) \leq el_2+(l_1-l_2)\lambda.
\end{equation}
By Lemma \ref{lemma:sharpestimate}, we have $\alpha_M(q) \geq \lceil (q-l_1) \lambda \rceil$ for all $q > l_1$. If $q \leq l_1$, as $(M, \varphi)$ is an $F$-crystal, we have $\alpha_M(q) \geq 0 \geq \lceil (q-l_1)\lambda \rceil$. Thus for all $q > 0$, we have
\[\alpha_M(q) \geq \lceil (q-l_1) \lambda \rceil.\]
To find an upper bound for $\beta_M(q)$, let $(M', \varphi')$ be the $F$-crystal $(M^*, p^{e}\varphi)$. It is isoclinic with Newton slope equal to $e - \lambda > 0$. If $h'_i$ are the Hodge numbers of $(M', \varphi')$, then by Lemma \ref{lemma:sharpestimate}, for all $q > \sum_{i < e - \lambda} h'_i$, we have 
\begin{equation} \label{equation:alpha}
\alpha_{M'}(q) \geq \lceil (q - \sum_{i < e - \lambda} h'_i)(e - \lambda) \rceil.
\end{equation}
By Lemma \ref{lemma:mainlemma1}, we have $\alpha_{M'}(q) = qe - \beta_M(q)$. By Lemma \ref{lemma:mainlemma2}, we have $\sum_{i<e-\lambda} h_i' = l_2$. Therefore inequality \eqref{equation:alpha} becomes $qe - \beta_M(q) \geq \lceil (q - l_2)(e - \lambda) \rceil$
for all $q > l_2$. On the other hand, if $0 < q \leq l_2$, as $p^eM \subset \varphi(M)$, we have $p^{qe}M \subset \varphi^q(M)$, which means that   $qe - \beta_M(q) \geq 0 \geq \lceil (q - l_2)(e - \lambda) \rceil$. Thus for all $q > 0$, we have 
\[qe - \beta_M(q) \geq \lceil (q - l_2)(e - \lambda) \rceil.\]
We are now ready to find an upper bound for $\beta_M(q)$.
For all $q >0$, we have
\begin{gather} 
\beta_M(q) \leq qe - \lceil (q - l_2)(e - \lambda) \rceil =qe - \lceil (q - l_2)e - (q - l_2)\lambda \rceil \notag \\
           =qe - (q - l_2)e + \lfloor (q - l_2)\lambda \rfloor =    el_2 + \lfloor (q - l_2)\lambda \rfloor \label{equation:beta} .
\end{gather}
Combining the estimates \eqref{equation:alpha} and \eqref{equation:beta}, for all $q > 0$,  we have
\begin{gather}
\delta_M(q) = \beta_M(q) - \alpha_M(q) \leq  el_2 + \lfloor (q - l_2)\lambda \rfloor - \lceil (q - l_1) \lambda \rceil \notag\\
            \leq  el_2 + (q - l_2)\lambda - (q - l_1) \lambda = el_2 + (l_1 - l_2)\lambda. \notag
\end{gather}
Thus, inequality \eqref{equation:main1} holds and the proof of the theorem is complete.
\end{proof}

\begin{corollary} \label{corollary:recover}
Let $(M, \varphi)$ be the Dieudonn\'e module corresponding to an isoclinic $p$-divisible group $D$ with dimension $d$ and codimension $c$, then the following inequality holds $n_M \leq \lfloor 2cd/(c+d) \rfloor$.
\end{corollary}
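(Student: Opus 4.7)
The plan is to derive Corollary \ref{corollary:recover} as a direct specialization of Theorem \ref{theorem:estimate_n_elementary}, with a brief separate treatment of degenerate cases. A Dieudonn\'e module satisfies $pM \subset \varphi(M) \subset M$, so its Hodge slopes lie in $\{0,1\}$. By classical Dieudonn\'e theory, the codimension $c$ of $D$ equals $h_0$ and the dimension $d$ equals $h_1$, and the rank is $r = c+d$. If $c=0$ or $d=0$, the Hodge polygon is a straight line (after applying Remark \ref{remark:rescale} in the case $c=0$ to shift all Hodge slopes down to $0$), so $(M,\varphi)$ is isoclinic ordinary and $n_M=0$ by Proposition \ref{proposition:n=0}; the bound $\lfloor 2cd/(c+d)\rfloor = 0$ then holds trivially. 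So I assume $c,d>0$.

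Next I identify the Newton slope. Since $(M,\varphi)$ is isoclinic, its unique Newton slope $\lambda$ equals the mean of the Hodge slopes, which is
\[
\lambda = \frac{\sum_{i=1}^r e_i}{r} = \frac{d}{c+d}.
\]
Because $0 < \lambda < 1$ and the only Hodge slopes present are $0$ and $1$, we have
\[
\sum_{i<\lambda} h_i = h_0 = c, \qquad \sum_{i>\lambda} h_i = h_1 = d.
\]
Moreover the smallest Hodge slope is $0$ and the largest is $e = 1$, so the hypotheses of Theorem \ref{theorem:estimate_n_elementary} are satisfied.

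Applying Theorem \ref{theorem:estimate_n_elementary} with these values and simplifying gives
\begin{align*}
n_M &\leq \left\lfloor 1 \cdot d + (c-d)\cdot \frac{d}{c+d} \right\rfloor \\
    &= \left\lfloor \frac{d(c+d) + d(c-d)}{c+d} \right\rfloor \\
    &= \left\lfloor \frac{2cd}{c+d} \right\rfloor,
\end{align*}
which is the desired bound. There is no real obstacle here: once the identifications $e=1$, $\lambda = d/(c+d)$, $\sum_{i<\lambda}h_i = c$, $\sum_{i>\lambda}h_i = d$ are in place, Corollary \ref{corollary:recover} reduces to the elementary algebraic simplification displayed above.
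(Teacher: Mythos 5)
Your proof is correct and follows essentially the same route as the paper: identify $h_0=c$, $h_1=d$, $e=1$, $\lambda=d/(c+d)$, apply Theorem \ref{theorem:estimate_n_elementary}, and simplify. Your extra care with the degenerate cases $c=0$ or $d=0$ (where the hypothesis that the smallest Hodge slope is $0$ could fail) is a harmless refinement the paper omits.
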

\begin{proof}
Since $D$ is isoclinic, the Dieudonn\'e module $(M, \varphi)$ is also isoclinic. The Newton slope is $\lambda = d/(c+d)$ and the Hodge numbers are $h_0 = c$, $h_1 = d$ and $h_i = 0$ for all $i > 1$. The Hodge slopes are $e_1 = \cdots = e_c = 0$ and $e_{c+1} = \cdots = e_{c+d} = 1$. By Theorem \ref{theorem:estimate_n_elementary}, we have 
\[n_M \leq \lfloor d + d(c-d)/(c+d) \rfloor = \lfloor 2cd/(c+d) \rfloor. \qedhere\]
\end{proof}

\begin{example} \label{example:supersingular}
Consider an isoclinic $F$-crystal $(M, \varphi)$ of rank $r = 2d$, $d \in \mathbb{Z}_{>0}$ with Hodge slopes $e_i = 0$ if $1 \leq i \leq d$ and $e_i = e > 0$ if $d+1 \leq i \leq r$. The unique Newton slope is equal to $e/2$. By Theorem \ref{theorem:estimate_n_elementary}, the isomorphism number is $n_M \leq de$. In fact, this inequality is optimal in the sense that there exists an isoclinic $F$-crystal $(M, \varphi)$ with the above rank and Hodge slopes such that $n_M = de$; see Proposition \ref{proposition:permutational}. This type of $F$-crystal is a generalization of supersingular Dieudonn\'e modules, (cf. \cite{Vasiu:supersingular}) which correspond to the case $e=1$.
\end{example}

\begin{example} \label{example:est_n_ele_not_optimal}
Let $(M, \varphi)$ be an isoclinic $F$-crystal of rank $3$ with Hodge slopes $e_1=0, e_2=1, e_3=5$ and Newton slope $\lambda = 2$. By the analysis of the Hodge slopes of the iterates of $(M, \varphi)$ using elementary row and column operations, it can be shown that $n_M \leq 6$. The details are a bit messy and omitted here. On the other hand, by using Theorem \ref{theorem:estimate_n_elementary}, we get that $n_M \leq 7$. This implies that Theorem \ref{theorem:estimate_n_elementary} can be improved in some cases and is not optimal in general.
\end{example}

\section{Applications} \label{section:applications}

\subsection{$F$-crystals of K3 type} \label{section:K3}

We recall that an $F$-crystal $(M, \varphi)$ of rank $r \in \mathbb{Z}_{\geq 2}$ is of K3 type if its Hodge numbers are $h_0=1, h_1=r-2, h_2=1$ and $h_i = 0$ for all $i \geq 3$. By Mazur's theorem \cite[Page 662, Lemma]{Mazur:FrobeniusHodge}, it can be shown that there are $(r^2-r+2)/2$ possible Newton polygons for $F$-crystals of K3 type. In fact, each possible Newton polygon is indeed the Newton polygon of some $F$-crystal of K3 type by a theorem of Kottwitz and Rapoport \cite[Theorem A]{KottwitzRapoport:existence}. If an $F$-crystal of K3 type is isoclinic, then all of its Newton slopes are equal to $1$. If it is non-isoclinic, then the Newton slopes could be in one of the following two disjoint cases:
\begin{enumerate}[(a)]
\item $r_1/(r_1+1)$, $1$, and $(r_2+2)/(r_2+1)$ if $r_1$ and $r_2$ satisfy $r_1, r_2 > 0$ and $0 < r_1+r_2 < r-2$, or
\item $r_1/(r_1+1)$ and $(r_2+2)/(r_2+1)$ if $r_1, r_2 >0$ and $r_1+r_2 = r-2$.
\end{enumerate}

\begin{proposition} \label{proposition:K3nonisoclinic}
If $(M, \varphi)$ is a non-isoclinic $F$-crystal of K3 type, then $n_M = 1$.
\end{proposition}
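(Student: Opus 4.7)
The plan is to apply the Newton--Hodge decomposition theorem \cite[Theorem 1.6.1]{Katz:slopefiltration} to split $(M, \varphi)$ into a direct sum of isoclinic $F$-crystals, one for each distinct Newton slope, and then to bound the isomorphism number of each summand using Theorem \ref{theorem:estimate_n_elementary} and Proposition \ref{proposition:n=0}, before combining via Proposition \ref{proposition:estimateldirectsum}.

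\textbf{Step 1: Newton--Hodge decomposition.} The Hodge polygon of a K3 type $F$-crystal of rank $r$ has vertices at $(0,0)$, $(1,0)$, $(r-1,r-2)$, $(r,r)$. In case (a), the Newton breakpoints $(r_1+1, r_1)$ and $(r-r_2-1, r-r_2-2)$ both lie on the middle segment of the Hodge polygon (which has slope $1$ from $x=1$ to $x=r-1$); in case (b), the single breakpoint $(r_1+1, r_1)$ similarly lies on this segment. Newton--Hodge therefore produces a decomposition $(M, \varphi) \cong \bigoplus_i (M_i, \varphi_i)$ in which each $(M_i, \varphi_i)$ is isoclinic and whose Hodge slopes are read off from the corresponding $x$-range of the K3 Hodge polygon.

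\textbf{Step 2: Isomorphism number of each summand.} In case (a) three summands appear: $(M_1, \varphi_1)$ of rank $r_1+1$ with Hodge slopes $(0,1,\ldots,1)$ and Newton slope $r_1/(r_1+1)$; a middle piece $(M_2, \varphi_2)$ of rank $r-r_1-r_2-2$ with all Hodge slopes equal to $1$; and $(M_3, \varphi_3)$ of rank $r_2+1$ with Hodge slopes $(1,\ldots,1,2)$ and Newton slope $(r_2+2)/(r_2+1)$. The middle piece is isoclinic ordinary, so $n_{M_2}=0$ by Proposition \ref{proposition:n=0}. Applying Theorem \ref{theorem:estimate_n_elementary} directly to $(M_1, \varphi_1)$ gives $n_{M_1} \leq \lfloor 2r_1/(r_1+1) \rfloor = 1$; after rescaling $(M_3, \varphi_3)$ by $p^{-1}$ (legal by Remark \ref{remark:rescale}) to get Hodge slopes $(0,\ldots,0,1)$ and Newton slope $1/(r_2+1)$, the same theorem gives $n_{M_3} \leq 1$. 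Because neither of these outer pieces is isoclinic ordinary (their Hodge polygons are not straight lines, since $r_1, r_2 > 0$), Proposition \ref{proposition:n=0} forces $n_{M_1} = n_{M_3} = 1$. Case (b) is completely analogous, with only the two outer summands appearing.

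\textbf{Step 3: Combine.} Proposition \ref{proposition:estimateldirectsum} gives
\[
n_M \leq \max\{1,\; n_{M_i},\; n_{M_i}+n_{M_j}-1\} = \max\{1, 0, 1+0-1, 1+1-1\} = 1.
\]
For the reverse inequality, since $(M, \varphi)$ is non-isoclinic it cannot be isoclinic ordinary, so Proposition \ref{proposition:n=0} yields $n_M \geq 1$. Hence $n_M = 1$.

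The only step that requires real care is Step 1: verifying that the Newton breakpoints lie on the Hodge polygon so that Newton--Hodge applies, and then correctly reading off the Hodge slopes of each summand. Once the decomposition is in hand the remaining estimates are routine floor computations and a direct appeal to the propositions already proved in the paper.
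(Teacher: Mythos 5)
Your proof is correct, but it handles the decomposed pieces differently from the paper. Both arguments begin with the same Newton--Hodge decomposition of $(M,\varphi)$ into the three (or two) isoclinic summands $(M_1,\varphi_1)$, $(M_2,\varphi_2)$, $(M_3,\varphi_3)$. The paper then works entirely at the level of the level torsion: it writes down explicit $W$-bases for each summand (via the cyclic-basis proposition in Demazure and Lemma \ref{lemma:n=0}), computes $\ell_{M_1}=\ell_{M_3}=1$, $\ell_{M_2}=0$, and --- crucially --- computes the cross-terms $\ell(1,2)$, $\ell(2,3)$, $\ell(1,3)$ by hand on basis elements before invoking Proposition \ref{proposition:computelisoclinic} to get $\ell_M=1$ exactly. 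You instead bound each $n_{M_i}$ abstractly: Theorem \ref{theorem:estimate_n_elementary} applied to the outer pieces (after the legitimate rescaling of $(M_3,\varphi_3)$) gives $n_{M_1},n_{M_3}\le\lfloor 2r_1/(r_1+1)\rfloor=1$ and $n_{M_3}\le 1$, Proposition \ref{proposition:n=0} gives $n_{M_2}=0$, and Proposition \ref{proposition:estimateldirectsum} absorbs all the cross-terms at once, yielding $n_M\le\max\{1,\,1+1-1\}=1$; the lower bound is the same in both arguments. Your route is shorter and avoids all explicit basis computations, at the cost of relying on the heavier general machinery of Theorem \ref{theorem:estimate_n_elementary} and of not producing the explicit cyclic bases (which the paper's proof of Theorem \ref{theorem:K3} later cites, though only for the facts that the decomposition exists and each piece has isomorphism number at most $1$, both of which your argument also supplies). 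The one point deserving care, which you correctly flag, is verifying that the Newton breakpoints lie on the Hodge polygon so that Newton--Hodge applies; your vertex bookkeeping there is right.
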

\begin{proof}
By \cite[Section 1.6]{Katz:slopefiltration}, we have a direct sum decomposition
\[(M,\varphi) \cong (M_1, \varphi_1) \oplus (M_2, \varphi_2) \oplus (M_3, \varphi_3),\] 
where
\begin{itemize}
\item $(M_1, \varphi_1)$ has Hodge numbers $h_0=1, h_1=r_1$, $h_i = 0$ for all $i \in \mathbb{Z}_{\geq 2}$ and Newton slope $r_1/(r_1+1)$. By \cite[Page 92, Proposition]{Demazure1}, the $W$-module $M_1$ has a $W$-basis $B_1 = \{x_1, \dots, x_{r_1+1} \}$ such that 
\[\varphi_1(x_i)=px_{i+1}, \; \forall \; i=1,2,\dots,r_1; \qquad \varphi_1(x_{r_1+1})=x_1.\]
\item $(M_2, \varphi_2)$ has Hodge numbers $h_1=r-r_1-r_2-2$, $h_i = 0$ for $i=0, 2, 3, \dots$. and Newton slope $1$. Hence $\varphi_2(M_2) = pM_2$. Applying  Lemma \ref{lemma:n=0} to $(M_2, p^{-1}\varphi_2)$, we get a $W$-basis $B_2 = \{y_1, \dots, y_{r-r_1-r_2-2} \}$ of $M_2$ such that $p^{-1}\varphi_2(y_i)=y_i$, and thus 
\[\varphi_2(y_i)=py_i, \; \forall \; i=1,2,\dots,r-r_1-r_2.\] 
\item $(M_3, \varphi_3)$ has Hodge numbers $h_1=r_2, h_2=1$, $h_i = 0$ for $i = 0, 3,4\dots$ and Newton slope $(r_2+2)/(r_2+1)$. Applying \cite[Page 92, Proposition]{Demazure1} to $(M_3, p^{-1}\varphi_3)$ whose Newton slope is $1/(r_2+1)$, we get a $W$-basis $B_3 = \{z_1, \dots, z_{r_2+1}\}$ of $M_3$ such that 
\[\varphi_3(z_i)=pz_{i+1}, \; \forall \; i=1,2,\dots,r_2; \qquad \varphi_3(z_{r_2+1})=p^2z_1.\]
\end{itemize}
We first calculate $\ell_M$ for $(M, \varphi)$ in Case (a) where $M_2 \neq 0$. The Case (b) where $M_2 = 0$ will be handled later.

We use Proposition \ref{proposition:computelisoclinic} to compute $\ell_M$. First we compute $\ell_{M_1}$, $\ell_{M_2}$, and $\ell_{M_3}$. Since $\delta_{M_1}(q)=1$ for all $q \in \mathbb{Z}_{\geq 1} \backslash \{n(r_1+1) \; | \; n \in \mathbb{Z}_{>0}\}$ and $\delta_{M_1}(n(r_1+1)) = 0$ for all $n \in \mathbb{Z}_{> 0}$, we know that $\ell_{M_1} = 1$ by Proposition \ref{proposition:computelii}. By the same token, we have $\ell_{M_3} = 1$. Since $\varphi_2^q(M) = p^qM$ for all $q \in \mathbb{Z}_{> 0}$, we know that $\ell_{M_2} = 0$ by Proposition \ref{proposition:computelii}. 

Next, we compute $\ell(1,2)$, $\ell(2,3)$, and $\ell(1,3)$. For $x_i \in B_1$ and $y_j \in B_2$, we have
\begin{equation*}
\varphi(y_j \otimes x_i^*) = \left\{
\begin{array}{ll}
y_j\otimes x_{i+1}^* & \textrm{if}\; 1 \leq i \leq r_1\\
py_j\otimes x_{1}^*  & \textrm{if}\; i=r_1+1.
\end{array} \right.
\end{equation*}
Hence $\ell(1,2) = 0$. For $y_j \in B_2$ and $z_l \in B_3$, we have
\begin{equation*}
\varphi(z_l \otimes y_j^*) = \left\{
\begin{array}{ll}
z_{l+1} \otimes y_j^* & \textrm{if}\;  1 \leq l \leq r_2\\
pz_1 \otimes y_j^*  &  \textrm{if}\;  l=r_2+1.
\end{array} \right.
\end{equation*}
Hence $\ell(2,3) = 0$. For $x_i \in B_1$ and $z_l \in B_3$, we have
\begin{equation*}
\varphi(z_l \otimes x_i^*) = \left\{
\begin{array}{ll}
z_{l+1} \otimes x_{i+1}^* & \textrm{if}\;  1 \leq i \leq r_1, 1\leq l \leq r_2\\
pz_1 \otimes x_{i+1}^* & \textrm{if} \; 1 \leq i \leq r_1, l=r_2+1 \\
pz_{l+1} \otimes x_{1}^*  &  \textrm{if}\; i=r_1+1, 1\leq l \leq r_2\\
p^2z_1 \otimes x_{1}^* & \textrm{if} \; i=r_1+1, l=r_2+1.
\end{array} \right.
\end{equation*}
Hence $\ell(1,3) = 0$. By Proposition \ref{proposition:computelisoclinic}, we have
\[\ell_M = \max\{\epsilon_M,\ell_{M_1}, \ell_{M_2}, \ell_{M_3}, \ell(1,2), \ell(2,3), \ell(1,3) \} = 1.\]

In Case (b) where $M_2=0$, we have $\ell_M = \max\{\epsilon_M,\ell_{M_1}, \ell_{M_3}, \ell(1,3) \} = 1.$ Thus in both Case (a) and Case (b), we have $\ell_M = 1$.

By Theorem \ref{theorem:vasiu1}, we have $n_M \leq \ell_M = 1$. On the other hand, the $F$-crystal $(M, \varphi)$ is not an ordinary $F$-crystal and thus $n_M \neq 0$ by Proposition \ref{proposition:n=0}. Hence $n_M = 1$.
\end{proof}

\begin{corollary} \label{corollary:K3directsumnoniso}
Let $(M, \varphi)$ be a direct sum of two or more non-isoclinic $F$-crystals of K3 type, then $n_M = 1$.
\end{corollary}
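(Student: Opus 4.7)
The plan is to reduce to the case already handled in Proposition \ref{proposition:K3nonisoclinic} by refining the given decomposition into a direct sum of isoclinic pieces. Write $(M,\varphi)=\bigoplus_{k}(N_k,\psi_k)$, where each $(N_k,\psi_k)$ is a non-isoclinic $F$-crystal of K3 type. As in the proof of Proposition \ref{proposition:K3nonisoclinic} (invoking Katz's Newton--Hodge decomposition \cite[Section 1.6]{Katz:slopefiltration}), each summand admits an isoclinic decomposition
\[
(N_k,\psi_k)\cong (N_{k,1},\psi_{k,1})\oplus (N_{k,2},\psi_{k,2})\oplus (N_{k,3},\psi_{k,3}),
\]
with Newton slopes $r_1^{(k)}/(r_1^{(k)}+1)$, $1$, and $(r_2^{(k)}+2)/(r_2^{(k)}+1)$ respectively, where the middle summand may be zero. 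Concatenating these gives a direct sum decomposition of $(M,\varphi)$ into finitely many isoclinic $F$-crystals, which puts us into the setting where Proposition \ref{proposition:estimateldirectsum} applies.

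The next step is to record that each isoclinic piece has isomorphism number at most $1$. Indeed, the computations in the proof of Proposition \ref{proposition:K3nonisoclinic} show $\ell_{N_{k,1}}=\ell_{N_{k,3}}=1$ and $\ell_{N_{k,2}}=0$, and the latticed $F$-isocrystal level torsions equal the isomorphism numbers of these isoclinic factors by Theorem \ref{theorem:vasiu1}. Consequently, applying Proposition \ref{proposition:estimateldirectsum} to the refined isoclinic decomposition of $(M,\varphi)$ yields
\[
n_M\leq \max\bigl\{1,\; n_{N_{k,a}},\; n_{N_{k,a}}+n_{N_{k',a'}}-1\bigr\}\leq \max\{1,1,1+1-1\}=1.
\]

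For the reverse inequality, observe that the Hodge polygon of $(M,\varphi)$ has Hodge numbers obtained by summing those of the K3 summands, so it still has nonzero Hodge multiplicities at $0$, $1$, and $2$. Hence the Hodge polygon is not a straight line, so $(M,\varphi)$ is not isoclinic ordinary, and Proposition \ref{proposition:n=0} gives $n_M\geq 1$. Combining the two bounds yields $n_M=1$.

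I do not expect any serious obstacle: all the heavy lifting is already contained in Proposition \ref{proposition:K3nonisoclinic} (which decomposes a single non-isoclinic K3 summand into explicit isoclinic pieces with small level torsion) and in Proposition \ref{proposition:estimateldirectsum} (which controls the level torsion of a direct sum of isoclinic latticed $F$-isocrystals). The only point that deserves attention is that Proposition \ref{proposition:estimateldirectsum} cannot be applied directly to the coarse decomposition $\bigoplus_k(N_k,\psi_k)$ because the $(N_k,\psi_k)$ are not isoclinic; one must first pass to the finer isoclinic decomposition described above before invoking it.
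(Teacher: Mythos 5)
Your proof is correct and follows essentially the same route as the paper: the upper bound comes from Proposition \ref{proposition:estimateldirectsum} combined with the isoclinic pieces constructed in the proof of Proposition \ref{proposition:K3nonisoclinic}, and the lower bound from Proposition \ref{proposition:n=0}. Your explicit remark that one must first refine to the isoclinic decomposition before invoking Proposition \ref{proposition:estimateldirectsum} is exactly the step the paper's one-line proof leaves implicit (and spells out only later, in the proof of Theorem \ref{theorem:K3}(iii)), so it is a welcome clarification rather than a deviation.
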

\begin{proof}
By Propositions \ref{proposition:estimateldirectsum} and \ref{proposition:K3nonisoclinic}, we have $n_M \leq 1$. As $(M, \varphi)$ is not isoclinic ordinary, we have $n_M \neq 0$. Hence $n_M = 1$.
\end{proof}

\begin{proposition} \label{proposition:K3isoclinic}
Let $(M, \varphi)$ be an isoclinic $F$-crystal of K3 type. Then $n_M = 2$.
\end{proposition}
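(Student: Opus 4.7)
The plan is to pin down $n_M$ from both sides: an upper bound of $2$ via Theorem \ref{theorem:estimate_n_elementary}, and a lower bound of $2$ via a direct computation of $\ell_M$ through Proposition \ref{proposition:computelii}.

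First I would identify the numerical invariants of an isoclinic $F$-crystal $(M,\varphi)$ of K3 type. After normalizing so that the smallest Hodge slope is $0$ (Remark \ref{remark:rescale}), the Hodge slopes are $0, 1, \dots, 1, 2$, so $h_0 = h_2 = 1$ and $h_1 = r-2$, and the largest Hodge slope is $e = 2$. The sum of the Hodge slopes equals $r$, and since $(M,\varphi)$ is isoclinic, the unique Newton slope must be $\lambda = 1$. Plugging $\lambda = 1$, $e = 2$, $\sum_{i<\lambda} h_i = h_0 = 1$, and $\sum_{i>\lambda} h_i = h_2 = 1$ into inequality \eqref{equation:main} of Theorem \ref{theorem:estimate_n_elementary} yields
\[
n_M \le \lfloor 2 \cdot 1 + (1-1) \cdot 1 \rfloor = 2.
\]

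For the matching lower bound, I would invoke Theorem \ref{theorem:vasiu1}, which gives $n_M = \ell_M$ since $(M,\varphi)$ is isoclinic, and then Proposition \ref{proposition:computelii}, which expresses $\ell_M = \max\{\delta_M(q) \mid q \in \mathbb{Z}_{>0}\}$. Taking $q = 1$, the fact that $0$ is a Hodge slope of $(M,\varphi)$ forces $\alpha_M(1) = 0$, and the fact that $2$ is the largest Hodge slope forces $\beta_M(1) = 2$. Hence $\delta_M(1) = 2$, so $\ell_M \ge 2$. Combining the two bounds gives $n_M = \ell_M = 2$.

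There is essentially no obstacle here: the only thing to verify carefully is that $\lambda = 1$ for any isoclinic $F$-crystal of K3 type (from the equality of the sum of Hodge slopes and the sum of Newton slopes), and that the computation of $\alpha_M(1)$ and $\beta_M(1)$ really reads off the extreme Hodge slopes. Both facts are immediate from the definitions recalled in Section \ref{section:pre}.
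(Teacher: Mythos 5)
Your proposal is correct and follows essentially the same route as the paper: the upper bound $n_M\le 2$ from Theorem \ref{theorem:estimate_n_elementary} with $\lambda=1$, $e=2$, $h_0=h_2=1$, and the lower bound from $\ell_M\ge\delta_M(1)=e_r-e_1=2$ via Theorem \ref{theorem:vasiu1} and Proposition \ref{proposition:computelii}. Your explicit verification that $\lambda=1$ (by comparing the sum of Hodge slopes with the sum of Newton slopes) is a detail the paper states without proof, and it is correct.
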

\begin{proof}
The unique Newton slope of $(M, \varphi)$ is $1$. The largest Hodge slope is $2$ and $\sum_{i < 1} h_i = \sum_{i > 1} h_i = 1$. By Theorem \ref{theorem:estimate_n_elementary}, we have $n_M \leq 2$. As $\ell_M \geq \delta_M(1) = e_r - e_1 = 2$ by Proposition \ref{proposition:computelii}, we conclude that $n_M = \ell_M = 2$.
\end{proof}

\begin{corollary} \label{corollary:K3directsumiso}
Let $(M, \varphi)$ be a direct sum of two or more isoclinic $F$-crystals of K3 type, then $n_M = 2$.
\end{corollary}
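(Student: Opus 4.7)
The plan is to follow the template of Corollary~\ref{corollary:K3directsumnoniso} but with a finer computation of $\ell_M$, since a blunt application of Proposition~\ref{proposition:estimateldirectsum} only produces $n_M \le 3$ (each isoclinic K3-type summand has isomorphism number $2$ by Proposition~\ref{proposition:K3isoclinic}). Write $(M,\varphi)=\bigoplus_{i\in I}(M_i,\varphi_i)$ with each $(M_i,\varphi_i)$ isoclinic of K3 type; since all Newton slopes equal $1$, the direct sum $(M,\varphi)$ is itself isoclinic of Newton slope $1$. By Theorem~\ref{theorem:vasiu1} it is enough to prove $\ell_M=2$. The lower bound is immediate: for any $i\in I$, Proposition~\ref{proposition:computelisoclinic} gives $\ell_M \ge \ell(i,i) = \ell_{M_i} = 2$.

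The crux of the proof I foresee is the pointwise estimate
\[
q-1 \le \alpha_{M_i}(q) \qquad\text{and}\qquad \beta_{M_i}(q) \le q+1 \qquad\text{for all } i\in I,\ q\in\mathbb{Z}_{>0}.
\]
I would establish this by contradiction using only $\delta_{M_i}(q) \le \ell_{M_i} = 2$ together with \eqref{equation:alphabetafact1} and \eqref{equation:alphabetafact2}. Indeed, if $\alpha_{M_i}(q) \le q-2$, then $\beta_{M_i}(q) \le \alpha_{M_i}(q)+2 \le q$; combined with $q \le \beta_{M_i}(q)$ from \eqref{equation:alphabetafact1}, this forces $\beta_{M_i}(q)=q$, and then \eqref{equation:alphabetafact2} gives $\alpha_{M_i}(q)=q$, contradicting the starting assumption. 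The estimate $\beta_{M_i}(q) \le q+1$ is symmetric. This step is the only nontrivial one; it is where I expect the main conceptual work to sit, since it shows that the K3 shape of the Hodge polygon of each summand tightly controls the drift of $\alpha_{M_i}$ and $\beta_{M_i}$ away from the Newton line.

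Armed with this estimate, Proposition~\ref{proposition:computeldirectsum} immediately yields $\ell(j,i) \le (q+1)-(q-1) = 2$ for every pair $j \ne i$, while $\ell(i,i) = \ell_{M_i} = 2$. Since $\epsilon_M \le 1$, Proposition~\ref{proposition:computelisoclinic} then concludes $\ell_M \le 2$. Combined with the lower bound, $\ell_M = 2$, whence $n_M = 2$ by Theorem~\ref{theorem:vasiu1}. Everything beyond the $\alpha$--$\beta$ estimate is pure bookkeeping with the level-torsion machinery already developed in Section~\ref{section:pre}.
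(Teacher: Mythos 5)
Your proof is correct, and its skeleton coincides with the paper's: both reduce to showing $\ell_M=2$ via Proposition~\ref{proposition:computelisoclinic}, obtain the lower bound from $\ell(i,i)=\ell_{M_i}=2$ (Proposition~\ref{proposition:K3isoclinic}), and obtain the upper bound on $\ell(j,i)$ from the two estimates $\alpha_{M_i}(q)\ge q-1$ and $\beta_{M_j}(q)\le q+1$ fed into Proposition~\ref{proposition:computeldirectsum}. Where you diverge is in how those two estimates are produced. The paper gets $\alpha_{M_i}(q)\ge q-1$ from Katz's sharp slope estimate (Lemma~\ref{lemma:sharpestimate}, using $\sum_{i<1}h_i=1$) and gets $\beta_{M_j}(q)\le q+1$ by passing to the twisted dual $(M_j^*,p^2\varphi_j)$ and invoking Lemma~\ref{lemma:mainlemma2}. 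You instead deduce both bounds purely from $\delta_{M_i}(q)\le\ell_{M_i}=2$ combined with \eqref{equation:alphabetafact1} and \eqref{equation:alphabetafact2}: since $\alpha_{M_i}(q)\le q\le\beta_{M_i}(q)$, the dichotomy in \eqref{equation:alphabetafact2} forces either $\alpha_{M_i}(q)=\beta_{M_i}(q)=q$ or $\alpha_{M_i}(q)\le q-1\le q+1\le\beta_{M_i}(q)$, and in the latter case $\delta_{M_i}(q)\le 2$ pins down $\alpha_{M_i}(q)=q-1$, $\beta_{M_i}(q)=q+1$; your contradiction argument is a correct rendering of this. Your route is more elementary and self-contained (no appeal to Lemmas~\ref{lemma:sharpestimate}, \ref{lemma:mainlemma1}, or \ref{lemma:mainlemma2}), and it generalizes readily to any direct sum of isoclinic summands sharing a common Newton slope once each $\ell_{M_i}$ is known; what it does not do, and what the paper's route via Lemma~\ref{lemma:sharpestimate} does, is exploit the Hodge numbers directly, which is the mechanism needed in settings where $\ell_{M_i}$ is not already computed. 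Your opening remark that the blunt bound from Proposition~\ref{proposition:estimateldirectsum} only gives $n_M\le 3$ is also accurate and correctly motivates the finer computation.
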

\begin{proof}
Let $(M,\varphi) \cong \bigoplus_{i \in I} (M_i, \varphi_i)$ be a finite direct sum of two or more isoclinic $F$-crystals of K3 type. The Newton slopes of $(M_i, \varphi_i)$ are $1$ for all $i \in I$. Hence $(M,\varphi)$ is again isoclinic (but not of K3 type). Thus we can use Proposition \ref{proposition:computelisoclinic} to compute $\ell_M$. For each $i \in I$, we know that $n_{M_i} = \ell_{M_i} = 2$ by Proposition \ref{proposition:K3isoclinic}. To calculate $\ell(j,i)$, we use Proposition \ref{proposition:computeldirectsum}. By Lemma \ref{lemma:sharpestimate}, we have $\alpha_{M_i}(q) \geq q-1$ for $q = 1, 2, \dots$ and for all $i \in I$. Let $(M_j', \varphi_j')$ be the $F$-crystal $(M_j^*, p^2\varphi_j)$ for all $j \in I$. By Lemma \ref{lemma:mainlemma2}, we have $\beta_{M_j}(q) = 2q-\alpha_{M'_j}(q)$. Applying Lemma \ref{lemma:sharpestimate} to $(M_j', \varphi_j')$, we have $\alpha_{M_j'}(q) \geq q-1$. We conclude that $\beta_{M_j}(q) \leq q+1$. Hence $\beta_{M_j}(q) - \alpha_{M_i}(q) \leq (q+1) - (q-1) \leq 2$ for all $q \in \mathbb{Z}_{>0}$ and $i,j\in I$. By Proposition \ref{proposition:computeldirectsum}, we have
\begin{equation} \label{equation:K3ldifferent}
\ell(j,i) = \max\{0, \beta_{M_j}(q) - \alpha_{M_i}(q) \; | \; q \in \mathbb{Z}_{>0}\} \leq 2.
\end{equation}
Since $\epsilon_M \leq 1$ and $\ell_{M_i} = 2$, inequality \eqref{equation:K3ldifferent} and Proposition \ref{proposition:computelisoclinic} imply that 
\[\ell_M = \max\{\epsilon_M, \ell_{M_i}, \ell(j,i) \; | \; i, j \in I \} = \max\{\ell_{M_i} \, | \; i \in I\} = 2.\] 
By Theorem \ref{theorem:vasiu1}, we have $n_M = \ell_M = 2$.
\end{proof}

\begin{proof}[\textbf{Proof of Theorem \ref{theorem:K3}}]
Suppose $(M,\varphi) = \bigoplus_{i \in I} (M_i, \varphi_i)$ is a mixed direct sum of isoclinic and non-isoclinic $F$-crystals of K3 type. Let $(M_{\textrm{iso}}, \varphi_{\textrm{iso}})$ be the direct sum of all isoclinic ones. By Corollary \ref{corollary:K3directsumiso}, we know that $n_{M_{\textrm{iso}}} = 2$. Every non-isoclinic $F$-crystal of K3 type can be decomposed into three isoclinic $F$-crystals (not of K3 type) whose isomorphism numbers are less than or equal to $1$; see proof of Proposition \ref{proposition:K3nonisoclinic}. By Proposition \ref{proposition:estimateldirectsum}, we have $n_M \leq 2$.

Parts (i) and (ii) have been proved by Corollaries \ref{corollary:K3directsumnoniso} and \ref{corollary:K3directsumiso} respectively. For Part (iii), if $(M_i, \varphi_i)$ is isoclinic and a direct summand of $(M, \varphi)$, then $n_M = \ell_M \geq \ell_{M_i} = n_{M_i} = 2$ by Theorem \ref{theorem:vasiu1} and Proposition \ref{proposition:computelisoclinic}. As $n_M \leq 2$ in general, we have $n_M = 2$ in this case.
\end{proof}

The \emph{isogeny cutoff} $b_M$ of an $F$-crystal $(M, \varphi)$ is the smallest non-negative integer such that for every $g \in \GL(M)$ with $g \equiv 1$ mod $p^{b_M}$, the $F$-crystal $(M, g\varphi)$ has the same Newton polygon as $(M, \varphi)$. As $b_M \leq n_M$, it is also finite.
\begin{proposition}
Let $(M, \varphi)$ be an $F$-crystal of K3 type. Then $b_M = 1$.
\end{proposition}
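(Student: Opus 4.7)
The plan is to establish $b_M = 1$ by proving the matching bounds $b_M \geq 1$ and $b_M \leq 1$. For the lower bound, I would use that an $F$-crystal of K3 type (of rank $r \geq 2$) admits at least two distinct Newton polygons compatible with the K3 Hodge polygon---for instance, the isoclinic polygon of slope $1$ together with some non-isoclinic polygon from the list enumerated at the start of Section~\ref{section:K3}. Given any other K3-type $F$-crystal structure $(M, \varphi')$ on the same $M$ with a different NP, the fact that $\varphi$ and $\varphi'$ share the same Smith normal form on $M$ lets us write $\varphi' = g\varphi$ for some $g \in \GL(M)$, which shows that $b_M \neq 0$.

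For the upper bound $b_M \leq 1$ I would split into cases according to whether $(M,\varphi)$ is isoclinic. In the non-isoclinic case, Proposition~\ref{proposition:K3nonisoclinic} gives $n_M = 1$, and the general inequality $b_M \leq n_M$ finishes this case. The isoclinic case is the main obstacle, since there $n_M = 2$ by Proposition~\ref{proposition:K3isoclinic} and one cannot simply invoke $b_M \leq n_M$. My plan here is to exploit that $(M, \varphi) \otimes_W K_0 \cong E_1^r$: pick a $K_0$-basis $\{f_1, \ldots, f_r\}$ of $M[1/p]$ with $\varphi(f_i) = pf_i$, so that in this basis $p^{-1}\varphi$ acts as the Frobenius $\sigma$ on coefficients. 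For $g \equiv 1 \pmod p$ in $\GL(M)$, write $g = 1 + ph$ with $h \in \End(M)$ and seek a new $K_0$-basis $\{f'_i = f_i + p\alpha_i + O(p^2)\}$ of $M[1/p]$ satisfying $(g\varphi)(f'_i) = pf'_i$. Expanding $\alpha_i = \sum_j a_{ij} f_j$ and $h(f_i) = \sum_j b_{ij} f_j$ reduces the leading-order equation to the scalar system $(1 - \sigma)(a_{ij}) = b_{ij}$ in $K_0$, which is solvable thanks to the algebraic closure of $k$ (Artin--Schreier--Witt: the map $1 - \sigma$ is surjective on $W$, hence on $K_0$). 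Iteratively lifting higher-order corrections then yields a genuine new basis, proving $(M, g\varphi) \otimes_W K_0 \cong E_1^r$; hence $(M, g\varphi)$ is isoclinic of slope $1$ with the same Newton polygon as $(M,\varphi)$.

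The main difficulty lies in making the iterative Artin--Schreier--Witt lifting rigorous in the isoclinic case: one has to control the $p$-adic valuations of the correction terms $\alpha_i$ (a priori, a $K_0$-solution $a_{ij}$ of $(1-\sigma)(a_{ij}) = b_{ij}$ may have unbounded negative valuation) so that the series for $f'_i$ converges to genuine elements of $M[1/p]$ and the resulting collection $\{f'_i\}$ is indeed a $K_0$-basis.
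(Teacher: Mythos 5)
Your lower bound and your non-isoclinic upper bound are essentially the paper's argument: the paper also gets $b_M>0$ from the existence, via Kottwitz--Rapoport \cite[Theorem A]{KottwitzRapoport:existence}, of a $g\in\GL(M)$ with $(M,g\varphi)$ of a different Newton polygon (this is the rigorous form of your Smith-normal-form/double-coset observation — note that compatibility of a Newton polygon with the Hodge polygon does not by itself guarantee it is realized in the given $\GL(M)$-orbit; that is precisely the content of the Kottwitz--Rapoport theorem), and it handles the non-isoclinic case by $b_M\le n_M=1$ exactly as you do.

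The genuine gap is the isoclinic upper bound, and the difficulty you flag is not a removable technicality: the approach cannot work as stated. Your Ansatz amounts to solving $(1-p^{-1}g\varphi)(\epsilon_i)=p\,h(f_i)$, i.e.\ to summing $\epsilon_i=\sum_{n\ge0}(p^{-1}g\varphi)^n\bigl(p\,h(f_i)\bigr)$. Writing $\psi=g\varphi$, the $n$-th term has valuation bounded below only by $\alpha_M(n)-n$ computed for $\psi$, and since $\alpha_M(n)\le n\lambda_1(\psi)$ by \eqref{equation:alphabetafact1}, the terms tend to $0$ only if $\lambda_1(\psi)>1$. This fails even in the conclusion you are aiming for ($\lambda_1(\psi)=1$ gives terms of bounded but non-increasing valuation), and if $\psi$ were non-isoclinic with $\lambda_1(\psi)<1$ — exactly the possibility you must exclude — the series diverges. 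So the convergence of the lifting presupposes a Newton-slope lower bound that is essentially the statement being proved. (A secondary issue: $\{f_i\}$ is not a $W$-basis of $M$ — otherwise $(M,p^{-1}\varphi)$ would be isoclinic ordinary and $n_M=0$, contradicting Proposition \ref{proposition:K3isoclinic} — so your coefficients $b_{ij}$ need not lie in $W$.) The paper avoids all of this with a short contrapositive: since $g\in\GL(M)$, the crystal $(M,g\varphi)$ has the same Hodge slopes and is again of K3 type; if it were non-isoclinic, the already-proved non-isoclinic case applied to $(M,g\varphi)$ and to $g^{-1}\equiv1\bmod p$ would force $(M,\varphi)$ to be non-isoclinic, a contradiction; hence $(M,g\varphi)$ is isoclinic, and the straight line of slope $1$ is the unique isoclinic Newton polygon with the K3 endpoints. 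You should replace your lifting argument by this symmetry trick.
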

\begin{proof}
If $(M, \varphi)$ is a non-isoclinic $F$-crystal of K3 type, we have proved that $n_M \leq 1$ and hence $b_M \leq n_M \leq 1$. If $(M, \varphi)$ is an isoclinic $F$-crystal of K3 type, then for any $g \in \GL(M)$ with the property that $g \equiv 1$ mod $p$, we have $M/\varphi(M) \cong M/g\varphi(M)$ as $W$-modules and thus $(M, g\varphi)$ and $(M, \varphi)$ have the same Hodge slopes, whence $(M, g\varphi)$ is also an $F$-crystal of K3 type. If $(M, g\varphi)$ is not isoclinic, then it is one of those non-isoclinic $F$-crystals of K3 type with isogeny cutoff less than or equal to $1$. From this and the fact that $g^{-1} \equiv 1$ mod $p$, we know that $(M, \varphi)$ is non-isoclinic, which is a contradiction. Thus $(M, g\varphi)$ is isoclinic and necessarily has the same Newton polygon as $(M, \varphi)$. This implies that $b_M \leq 1$ when $(M, \varphi)$ is isoclinic.

Next we prove that $b_M > 0$. Let $(M, \varphi)$ be an isoclinic $F$-crystal of K3 type. By \cite[Theorem A]{KottwitzRapoport:existence}, we know that there exists $g \in \GL(M)$ such that $(M, g\varphi)$ is non-isoclinic. Therefore $(M, \varphi)$ and $(M, g\varphi)$ do not have the same Newton polygon, and this proves that $b_M > 0$ if $(M, \varphi)$ is isoclinic. By the same token, we can show that $b_M > 0$ if $(M, \varphi)$ is non-isoclinic of K3 type. Therefore $b_M > 0$. As a result, we have $b_M = 1$.
\end{proof}

\subsection{$F$-crystals of rank $2$} \label{section:rank2}

In this section, we compute the isomorphism number of $F$-crystals of rank $2$. Without loss of generality, we can assume that the smallest Hodge slope is $0$ by Remark \ref{remark:rescale}. Let $e \geq 0$ be the other Hodge slope. If $e=0$, then the isomorphism number is zero by Proposition \ref{proposition:n=0}. Thus we assume that $e>0$. 

\begin{proof}[\textbf{Proof of Theorem \ref{theorem:rank2}}]
Let $\lambda_2$ be the other Newton slope of $(M, \varphi)$. 

We prove (i). If $(M, \varphi)$ is a direct sum of two $F$-crystals of rank $1$, then each direct summand of $(M, \varphi)$ is an $F$-crystal whose Hodge polygon and Newton polygon coincide. Therefore, the Hodge and Newton slopes of each direct summand are equal. Hence $\lambda_1 = e_1 = 0$ and $\lambda_2 = e_2 = e$. By Corollary \ref{corollary:directsumordinary}, we have $n_M = 1$.

We prove (ii). If $(M, \varphi)$ is not a direct sum of two $F$-crystals of rank $1$ and is isoclinic, then $\lambda_1 = \lambda_2 = e/2$. By Theorem \ref{theorem:estimate_n_elementary}, we have $n_M \leq e$. As $\ell_M \geq e$ by Proposition \ref{proposition:computelii}, we have $n_M = \ell_M = e$ by Theorem \ref{theorem:vasiu1}, as desired.

We prove (iii). If $(M,\varphi)$ is not a direct sum of two $F$-crystals of rank $1$ and is non-isoclinic, then the Newton slopes $\lambda_1 < \lambda_2$ are both positive integers. Indeed, if either $\lambda_1$ or $\lambda_2$ is not an integer, say $\lambda_1 = c/d \notin \mathbb{Z}$ (in reduced form), then $d$ must be $2$  as the number of times that $\lambda_1 = c/d$ appears as a Newton slope is a multiple of $d$. As there are only two Newton slopes, we know that $\lambda_1 = \lambda_2 \in \mathbb{Z}+1/2$. This contradicts to the fact that $(M, \varphi)$ is non-isoclinic. If $\lambda_1 = 0$, then $\lambda_2 = e$ which implies that $(M, \varphi)$ is a direct sum of two $F$-crystals of rank $1$. This is a contradiction again!

Now we assume that $0 < \lambda_1 < \lambda_2$ are both integers. There exists a $W$-basis $B_1 = \{x_1, x_2\}$ of $M$ such that $\varphi$ is of the form $\begin{pmatrix} p^{\lambda_1} & u \\ 0 & p^{\lambda_2} \end{pmatrix}$ where $u$ is a unit in $W$. By solving equations of the form $\varphi(z) = p^{\lambda_1}z$ and $\varphi(z) = p^{\lambda_2}z$, we find a $K_0$-basis $B_2 = \{y_1 = x_1, y_2 = vx_1+p^{\lambda_1}x_2\}$ of $M[1/p]$ with $v$ a unit in $W$ such that $\sigma(v)+u=p^{\lambda_2-\lambda_1}v$. As in Section \ref{subsection:computeldirectsum}, the set $B_1 \otimes B_1^*$ is a $W$-basis of $\End(M)$ and hence a $K_0$-basis of $\End(M[1/p])$; the set $B_2 \otimes B_2^*$ is another $K_0$-basis of $\End(M[1/p])$. As $\varphi(y_1)=p^{\lambda_1}y_1$ and $\varphi(y_2) = p^{\lambda_2}y_2$, we have 
\begin{align*}
\varphi(y_2 \otimes y^*_1) &= p^{\lambda_2-\lambda_1} y_2 \otimes y^*_1, \quad & \varphi(y_1 \otimes y^*_1) &= y_1 \otimes y^*_1, \\
\varphi(y_2 \otimes y^*_2) &= y_2 \otimes y^*_2, \quad &\varphi(y_1 \otimes y^*_2) &= p^{\lambda_1-\lambda_2} y_1 \otimes y_2^*.
\end{align*}
Therefore, we have found $K_0$-bases for
\[L_+ = \langle y_2 \otimes y^*_1 \rangle_{K_0}, \quad L_0 = \langle y_1 \otimes y^*_1, y_2 \otimes y^*_2 \rangle_{K_0}, \quad L_- = \langle y_1 \otimes y^*_2 \rangle_{K_0}.\]
We compute the change of basis matrix from $B_1 \otimes B_1^*$ to $B_2 \otimes B_2^*$ as follows:
\begin{align*}
y_1 \otimes y_1^* &= x_1 \otimes x^*_1-\frac{\sigma(v)}{p^{\lambda_1}}x_1 \otimes x_2^*,\\
y_2 \otimes y_1^* &= vx_1 \otimes x_1^*+p^{\lambda_1}x_2 \otimes x_1^*-\frac{\sigma(v)v}{p^{\lambda_1}}x_1 \otimes x_2^*-\sigma(v)x_2 \otimes x_2^*, \\
y_1 \otimes y^*_2 &= \frac{1}{p^{\lambda_1}}x_1 \otimes x^*_2,\\
y_2 \otimes y^*_2 &= \frac{v}{p^{\lambda_1}}x_1 \otimes x^*_2 + x_2 \otimes x^*_2.
\end{align*}
It is easy to see that $p^{\lambda_1} y_i \otimes y^*_j \in \End(M) \; \backslash \; p\End(M)$ for $i, j \in \{1,2\}$. We get that 
\begin{enumerate}[(a)]
\item $O_+ = \langle p^{\lambda_1}y_2 \otimes y_1 \rangle_{W}$; 
\item $N := \langle p^{\lambda_1}y_1 \otimes y^*_1, p^{\lambda_1}y_2\otimes y^*_2\rangle_W \subset O_0$ is a lattice; 
\item $O_- = \langle p^{\lambda_1}y_1 \otimes y^*_2 \rangle_{W}$. 
\end{enumerate}
Therefore, $O_+ \oplus N \oplus O_-$ is a sublattice of $O$. The change of basis matrix from $\{p^{\lambda_1}y_1 \otimes y_1^*, p^{\lambda_1}y_2 \otimes y_1^*, p^{\lambda_1}y_1 \otimes y_2^*, p^{\lambda_1}y_2 \otimes y_2^*\}$ to $B_1 \otimes B_1^*$ is
\[A = \begin{pmatrix} p^{\lambda_1} & p^{\lambda_1}v & 0 & 0 \\ 0 & p^{2\lambda_1} & 0 & 0 \\ -\sigma(v) & -\sigma(v)v & 1 & v \\ 0 & -p^{\lambda_1}\sigma(v) & 0 & p^{\lambda_1} \end{pmatrix} \]
To find a upper bound for $\ell_M$, we compute the inverse of $A$:
\[A^{-1} = \frac{1}{p^{2\lambda_1}}\begin{pmatrix} p^{\lambda_1} & -v & 0 & 0 \\ 0 & 1 & 0 & 0 \\ \sigma(v)p^{\lambda_1} & \sigma(v)v & p^{2\lambda_1} & -p^{\lambda_1}v \\ 0 & \sigma(v) & 0 & p^{\lambda_1} \end{pmatrix} \]
Thus the smallest number $\ell$ such that all entries of $p^{\ell}A^{-1} \in W$ is $2\lambda_1$. Hence $\ell_M \leq 2\lambda_1$. By Theorem \ref{theorem:vasiu1}, we have $n_M \leq 2\lambda_1$.
\end{proof}

\section{Quasi-special $F$-crystals}
\begin{lemma} \label{lemma:smust}
In Definition \ref{definition:quasi-special} of isoclinic quasi-special $F$-crystals, the non-negative number $s$ must equal to the sum of all Hodge slopes.
\end{lemma}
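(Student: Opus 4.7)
The plan is to compute the $W$-length of $M/\varphi^r(M)$ in two different ways and equate the answers.

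On one hand, the defining condition $\varphi^r(M) = p^s M$ gives $M/\varphi^r(M) \cong (W/p^s W)^r$, which has length $rs$ as a $W$-module. On the other hand, the definition of the Hodge slopes says $M/\varphi(M) \cong \bigoplus_{i=1}^r W/p^{e_i}W$, of length $\sum_{i=1}^r e_i$. So the proof reduces to showing that $\mathrm{length}_W(M/\varphi^r(M)) = r\sum_{i=1}^r e_i$.

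To establish this I would telescope through the chain
\[M \supset \varphi(M) \supset \varphi^2(M) \supset \cdots \supset \varphi^r(M)\]
and argue that each successive quotient $\varphi^k(M)/\varphi^{k+1}(M)$ has the same length as $M/\varphi(M)$. Indeed, $\varphi^k \colon M \to \varphi^k(M)$ is a bijective $\sigma^k$-linear map that sends $\varphi(M)$ onto $\varphi^{k+1}(M)$, so it induces a bijective $\sigma^k$-linear map $M/\varphi(M) \to \varphi^k(M)/\varphi^{k+1}(M)$. The remaining point is the elementary observation that any bijective $\sigma^k$-linear map between finite-length $W$-modules preserves length: because $\sigma$ fixes $p$, such a map restricts to a bijection between the $p$-power filtrations of the two sides, matching up their elementary divisors.

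Summing over $k = 0, 1, \ldots, r-1$ yields $\mathrm{length}_W(M/\varphi^r(M)) = r\sum_{i=1}^r e_i$, and comparing with $rs$ gives $s = \sum_{i=1}^r e_i$, as claimed. The only nonformal ingredient is the length-preservation lemma for $\sigma^k$-linear bijections, which is the main (and very mild) obstacle; every other step is bookkeeping.
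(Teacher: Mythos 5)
Your proof is correct, but it follows a genuinely different route from the paper's. You compute $\mathrm{length}_W(M/\varphi^r(M))$ twice: once as $rs$ from the defining relation $\varphi^r(M)=p^sM$, and once as $r\sum_{i=1}^r e_i$ by telescoping through $M\supset\varphi(M)\supset\cdots\supset\varphi^r(M)$ and noting that the $\sigma^k$-semilinear bijection induced by $\varphi^k$ preserves length (it carries the $p$-power filtration of $M/\varphi(M)$ onto that of $\varphi^k(M)/\varphi^{k+1}(M)$, since $\sigma(p)=p$). All steps check out, and the length-preservation point you flag is indeed mild and standard. The paper instead applies Lemma \ref{lemma:n=0} to the rescaled iterate $(M,\varphi^r/p^s)$, whose Hodge polygon is a straight line of slope $0$, to produce a basis with $\varphi^r(v_i)=p^sv_i$; it then reads off from the Dieudonn\'e--Manin classification that every Newton slope equals $s/r$, and concludes by the standard fact that the sum of the Hodge slopes equals the sum of the Newton slopes. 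Your argument is more elementary and self-contained -- it avoids Lemma \ref{lemma:n=0}, Dieudonn\'e--Manin, and the Hodge-sum-equals-Newton-sum identity (whose usual proof is essentially your length computation anyway). The paper's argument, on the other hand, yields the extra information that such an $F$-crystal is isoclinic of slope $s/r$, which is reused later (e.g., in Remark \ref{remark:permuisquasi}). Either proof is acceptable.
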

\begin{proof}
Consider the iterate $(M, \varphi^r/p^s)$; its Hodge polygon is a straight line of slope $0$. By Lemma \ref{lemma:n=0}, we know that there is a $W$-basis $\{v_1, v_2, \dots, v_r\}$ of $M$ such that $(\varphi^r/p^s)(v_i) = v_i$ and thus $\varphi^r(v_i) = p^sv_i$ for $i=1,2,\dots$. By the Dieudonn\'e-Manin classification of $F$-crystals up to isogeny, we know that every Newton slope must be equal to $s/r$. The sum of all Hodge slopes, which is equal to the sum of all Newton slopes, is equal to $\sum_{r} s/r = s$.
\end{proof}

\begin{lemma} \label{lemma:computelquasispecial}
If $(M, \varphi)$ is an isoclinic quasi-special $F$-crystal, then \[n_M = \max\{\delta_{M}(j) \; | \; j = 1, 2, \dots, r\}.\] 
\end{lemma}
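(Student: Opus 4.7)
The plan is to combine the periodicity of $\delta_M$ coming from the quasi-special condition with the formula of Proposition \ref{proposition:computelii} and Theorem \ref{theorem:vasiu1}. Since $(M, \varphi)$ is isoclinic, Theorem \ref{theorem:vasiu1} gives $n_M = \ell_M$, and Proposition \ref{proposition:computelii} gives
\[\ell_M = \max\{\delta_M(q) \mid q \in \mathbb{Z}_{>0}\}.\]
So it suffices to show that this maximum is attained for some $q \in \{1, 2, \ldots, r\}$.

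The key input is the defining property $\varphi^r(M) = p^s M$. First I would observe that for any $q \in \mathbb{Z}_{>0}$,
\[\varphi^{q+r}(M) = \varphi^q(\varphi^r(M)) = \varphi^q(p^s M) = p^s\varphi^q(M).\]
Applied to the definitions of $\alpha_M$ and $\beta_M$, this gives $\alpha_M(q+r) = \alpha_M(q) + s$ and $\beta_M(q+r) = \beta_M(q) + s$, and hence
\[\delta_M(q+r) = \beta_M(q+r) - \alpha_M(q+r) = \delta_M(q).\]
Thus $\delta_M$ is periodic in $q$ with period $r$.

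Consequently, every value $\delta_M(q)$ for $q \in \mathbb{Z}_{>0}$ already appears in the finite set $\{\delta_M(1), \delta_M(2), \ldots, \delta_M(r)\}$, so
\[\max\{\delta_M(q) \mid q \in \mathbb{Z}_{>0}\} = \max\{\delta_M(j) \mid j = 1, 2, \ldots, r\}.\]
Combining with $n_M = \ell_M$ and the formula from Proposition \ref{proposition:computelii} yields the claim. The argument is entirely routine once the periodicity is noted; no obstacle really arises, and Lemma \ref{lemma:smust} is not needed here — only the raw identity $\varphi^r(M) = p^s M$ is used, through which $s$ enters in a way that cancels when one forms $\delta_M = \beta_M - \alpha_M$.
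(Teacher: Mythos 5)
Your proof is correct and matches the paper's argument essentially verbatim: both reduce to $n_M=\ell_M=\max_q \delta_M(q)$ via Theorem \ref{theorem:vasiu1} and Proposition \ref{proposition:computelii}, and then use $\varphi^{r}(M)=p^sM$ to get $\alpha_M(q+r)=\alpha_M(q)+s$, $\beta_M(q+r)=\beta_M(q)+s$, hence the $r$-periodicity of $\delta_M$. Your remark that Lemma \ref{lemma:smust} is not needed is also consistent with how the paper proceeds.
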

\begin{proof}
As $(M, \varphi)$ is isoclinic, we have $n_{M} = \ell_{M}$ by Theorem \ref{theorem:vasiu1}. By definition, we have $\varphi^{r}(M) = p^{s}M$. This means that $\alpha_{M}(r) = \beta_{M}(r) = s$. In addition, for all $j \in \mathbb{Z}_{>0}$, we have $\alpha_{M}(r+j) = \alpha_{M}(j)+s$ and $\beta_{M}(r+j) = \beta_{M}(j)+s$, thus $\delta_M(r+j) = \delta(j)$. Hence by Proposition \ref{proposition:computelii}, the Lemma follows from:
\[n_M = \ell_{M} = \max\{\delta_{M}(j) \; | \; j \in \mathbb{Z}_{> 0}\} = \max\{\delta_{M}(j) \; | \; j = 1, 2, \dots, r\}. \qedhere\]
\end{proof}

\begin{proof}[\textbf{Proof of Theorem \ref{theorem:quasi-special}}]
Let $(M, \varphi) = \bigoplus_{i = 1}^t (M_i, \varphi_i)$ be a finite direct sum of isoclinic quasi-special $F$-crystals $(M_i, \varphi_i)$. We first prove the theorem for each $(M_i, \varphi_i)$. For $i=1,2,\dots,t$, let $r_i$ be the rank of $M_i$, $s_i$ the sum of all Hodge slopes of $(M_i, \varphi_i)$, and $e^{(i)}_{r_i}$ the largest Hodge slope of $(M_i, \varphi_i)$. By Lemma \ref{lemma:computelquasispecial}, we have
\[n_{M_i} = \max\{\delta_{M_i}(1), \delta_{M_i}(2), \dots, \delta_{M_i}(r_i)\}.\]
For each $1 \leq j \leq r_i$, we have $\alpha_{M_i}(j) \geq 0$ and $\beta_{M_i}(j) \leq s_i$. Therefore $\delta_{M_i}(j) \leq s_i$ and thus $n_{M_i} \leq s_i$. To show that $n_{M_i} \leq r_ie^{(i)}_{r_i}-s_i$, we consider the $F$-crystal $(M_i^*, p^{e^{(i)}_{r_i}}\varphi_i)$. It is an isoclinic quasi-special $F$-crystal whose isomorphism number is equal to the isomorphism number of $(M_i, \varphi_i)$ by Remark \ref{remark:doubleduality}. The sum of all Hodge slopes of $(M_i^*, p^{e^{(i)}_{r_i}}\varphi_i)$ is equal to $r_ie^{(i)}_{r_i}-s_i$. By using the same type of argument as before, we get that $n_{M_i} = n_{M^*_i} \leq r_ie^{(i)}_{r_i}-s_i$. Therefore, we have proved the theorem for each isoclinic quasi-special $F$-crystal $(M_i, \varphi_i)$, namely  
\begin{equation} \label{equation:quasispecial}
n_{M_i} \leq \min\{s_i, r_ie^{(i)}_{r_i}-s_i\}.
\end{equation}

Now we prove the theorem for $(M, \varphi)$. By Proposition \ref{proposition:estimateldirectsum}, we have $n_M \leq \max\{1, n_{M_i}, n_{M_i} + n_{M_j} - 1 \; | \; i, j \in I, i \neq j\} \leq \max\{1, n_{M_i} + n_{M_j} \; | \; i, j \in I, i \neq j\}.$ By \eqref{equation:quasispecial}, we have 
\[n_M \leq \max\{1, \min\{s_i+s_j, r_ie^{(i)}_{r_i}+r_je^{(j)}_{r_j}-s_i-s_j\}\; | \; i, j \in I, i \neq j\}.\] 
As $\sum_{l=1}^r s_l = s$, we have $s_i+s_j \leq s$. For any $1 \leq l \leq t$, as $e_{r_l}^{(l)} \leq e_r$ and $s_l \leq r_l e_{r_l}^{(l)} \leq r_le_r$, we have
\[r_ie^{(i)}_{r_i}+r_je^{(j)}_{r_j} + \sum_{l \neq i, j} s_l \leq r_i e_r + r_j e_r + \sum_{l \neq i, j} r_l e_r  = \sum_{l=1}^t r_l e_r = re_r.\]
Use this estimate, we get
\[r_ie^{(i)}_{r_i}+r_je^{(j)}_{r_j}-s_i-s_j  = (r_ie^{(i)}_{r_i}+r_je^{(j)}_{r_j} + \sum_{l \neq i, j} s_l) - \sum_{l=1}^t s_l \leq re_r - s.\] Thus $n_M \leq \max\{1, \min\{s, re_r-s\}\}$. If $\min\{s, re_r-s\} = 0$, then either $s=0$ or $re_r=s$. In both cases, the Hodge polygon of $(M, \varphi)$ is a straight line. By Proposition \ref{proposition:n=0}, we know that $n_M = 0$. Therefore $n_M \leq \min\{s, re_r-s\}$ as desired.
\end{proof}

\begin{example} \label{example:quasispecialoptimal}
Let $(M, \varphi)$ be a quasi-special $F$-crystal such that $s=e_r$. We claim that $n_M = \min\{s, re_r-s\}$. Indeed, if $r=1$, then $(M, \varphi)$ is an isoclinic ordinary $F$-crystal. In this case, the isomorphism number $n_M = 0 = \min\{s,re_r-s\}$. If $r > 1$, then $\min\{s,re_r-s\} = e_r$. By Lemma \ref{lemma:computelquasispecial}, we know that $n_M \geq \delta_M(1) = e_r$. Therefore $n_M = e_r$.
\end{example}

\begin{remark} \label{remark:quasispecial} \mbox{}
\begin{enumerate}
\item If $(M, \varphi)$ is a quasi-special Dieudonn\'e module with dimension $d$ and codimension $c$, then $e_r = 1$ and $s = d$. By Theorem \ref{theorem:quasi-special}, we have $n_M \leq \min \{c, d\}$. This recovers \cite[Theorem 1.5.2]{Vasiu:reconstructing}.
\item Theorem \ref{theorem:quasi-special} is not optimal in general. For example, if $(M, \varphi)$ is a quasi-special $F$-crystal of K3 type, then by Theorem \ref{theorem:K3}, $n_M \leq 2$. On the other hand, Theorem \ref{theorem:quasi-special} asserts that $n_M \leq r$.
\end{enumerate}
\end{remark}

Let $\{v_1, v_2, \dots, v_r\}$ be a $W$-basis of $M$. Let $\pi$ be an arbitrary permutation of the set $\{1, 2, \dots, r\}$. Let ${\bf e} := \{e_1 \leq e_2 \leq \dots \leq e_r\}$ be a sequence of non-negative integers. The $F$-crystal $(M, \varphi_{\pi, {\bf e}})$ is defined by the rule $\varphi_{\pi, {\bf e}}(v_i) = p^{e_i}v_{\pi(i)}$ for all $1 \leq i \leq r$. Clearly the Hodge slopes of $(M, \varphi_{\pi, {\bf e}})$ are $e_1, e_2, \dots, e_r$.

\begin{definition} \label{definition:permutationalcyclic}
An $F$-crystal $(M, \varphi)$ is called \emph{permutational} (resp. \emph{cyclic}) if there is a non-trivial permutation (resp. cycle) $\pi$ such that $(M, \varphi)$ is isomorphic to $(M, \varphi_{\pi,{\bf e} })$ where ${\bf e} := \{e_1 \leq e_2 \leq \dots \leq e_r\}$ are the Hodge slopes of $(M, \varphi)$.
\end{definition}

\begin{remark} \mbox{} \label{remark:permuisquasi}
\begin{enumerate}
\item If $(M, \varphi)$ is permutational, then $(M, \varphi)$ is quasi-special. See \cite[Lemma 4.2.4(a)]{Vasiu:reconstructing} for a proof of the same result for $p$-divisible groups.
\item If $(M, \varphi)$ is cyclic of rank $r$, then $\varphi^r(M) = p^sM$ where $s$ is the sum of all Hodge slopes. Hence $(M, \varphi)$ is isoclinic with unique Newton slope equal to $s/r$.
\end{enumerate}
\end{remark}

We turn our attention to the isomorphism number of permutational $F$-crystals. By Propositions \ref{proposition:computelii} and \ref{proposition:computelisoclinic}, given an explicit formula of $\varphi$ in terms of a permutation, it is not hard to compute $n_M$ of a permutational $F$-crystal. In the next proposition, we study the maximal possible value of $n_M$ if we only know the Hodge slopes of $(M, \varphi)$ without knowing an explicit formula of $\varphi$.

\begin{lemma} \label{lemma:permutational}
Let $e_1 \leq e_2 \leq \cdots \leq e_r$ be integers. Fix $j \in \{1, 2, \dots, \lfloor r/2 \rfloor\}$. For any $s_1, s_2, \dots, s_j, t_1, t_2, \dots, t_j \in \{1, 2, \dots, r\}$ such that
\begin{enumerate}[(a)]
\item $s_1, s_2, \dots, s_j$ are distinct and $t_1, t_2, \dots, t_j$ are distinct;
\item $e_{t_1} \leq e_{t_2} \leq \cdots \leq e_{t_j}$ and $e_{s_j} \leq e_{s_{i-1}} \leq \cdots \leq e_{s_1}$;
\item $\alpha := e_{t_1} + e_{t_2} + \cdots + e_{t_j} \leq e_{s_1} + e_{s_2} + \cdots + e_{s_j} =: \beta$; 
\end{enumerate}
we have $\beta - \alpha \leq \sum_{i=1}^j (e_{r-i+1} - e_i)$.
\end{lemma}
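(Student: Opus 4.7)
The plan is to exploit cancellation coming from any overlap between the index sets $S := \{s_1, \ldots, s_j\}$ and $T := \{t_1, \ldots, t_j\}$, reduce to the disjoint case, and then apply a direct extremal argument. The ordering conditions in (b) play no essential role beyond identifying $\alpha$ and $\beta$ as sums over the sets $T$ and $S$ respectively, so the inequality is really about two size-$j$ subsets of $\{1,\ldots,r\}$.

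First I would observe that every index $i \in S \cap T$ contributes $e_i$ to both $\beta$ and $\alpha$, hence cancels in the difference. Setting $k := |S \cap T|$, this gives
\[
\beta - \alpha \;=\; \sum_{i \in S \setminus T} e_i \;-\; \sum_{i \in T \setminus S} e_i,
\]
where $S \setminus T$ and $T \setminus S$ are \emph{disjoint} subsets of $\{1,\ldots,r\}$, each of cardinality $j-k$. The core step is the extremal estimate: among all pairs of disjoint size-$m$ subsets $A,B \subset \{1,\ldots,r\}$ with $2m \leq r$, the quantity $\sum_{i\in A} e_i - \sum_{i\in B} e_i$ is maximized by choosing $A = \{r-m+1,\ldots,r\}$ and $B = \{1,\ldots,m\}$; these sets are disjoint precisely because $2m \leq r$. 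Applying this with $m = j-k$, where $2(j-k) \leq 2j \leq r$ by the hypothesis $j \leq \lfloor r/2 \rfloor$, yields
\[
\beta - \alpha \;\leq\; \sum_{i=1}^{j-k} (e_{r-i+1} - e_i).
\]

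Finally I would observe that for each $i$ with $1 \leq i \leq j \leq \lfloor r/2 \rfloor$ one has $r - i + 1 > i$, so $e_{r-i+1} \geq e_i$, and therefore the additional summands $e_{r-i+1} - e_i$ for $i \in \{j-k+1,\ldots,j\}$ are all non-negative. This gives
\[
\sum_{i=1}^{j-k}(e_{r-i+1} - e_i) \;\leq\; \sum_{i=1}^{j}(e_{r-i+1} - e_i),
\]
which is the desired bound. The only genuine obstacle is bookkeeping: keeping careful track of which indices cancel and verifying that the extremal disjoint sets fit inside $\{1,\ldots,r\}$, which in both instances reduces to the assumption $j \leq \lfloor r/2 \rfloor$. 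The hypothesis (c) that $\alpha \leq \beta$ is not used in the derivation itself, but simply guarantees that the left-hand side is non-negative so that the stated inequality is meaningful.
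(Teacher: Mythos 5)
Your proof is correct, and it takes a genuinely different route from the paper's. The paper pairs the indices termwise: it fixes the largest $l$ with $e_{t_l}\leq e_{s_l}$, shows $e_{s_i}-e_{t_i}\leq e_{r-i+1}-e_i$ for $i\leq l$ (because $t_1,\dots,t_i$ are $i$ distinct indices, forcing $e_{t_i}\geq e_i$, and dually $e_{s_i}\leq e_{r-i+1}$), notes the remaining differences are negative, and sums. You instead work globally: cancel the contribution of $S\cap T$, bound the sum over $S\setminus T$ above by the $j-k$ largest slopes and the sum over $T\setminus S$ below by the $j-k$ smallest, and then pad with the nonnegative terms $e_{r-i+1}-e_i$ for $i>j-k$. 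Both arguments ultimately rest on the same elementary fact that a sum of $e$ over $m$ distinct indices lies between the $m$ smallest and $m$ largest values; your organization has the advantage of making explicit that hypotheses (b) and (c) are inessential (as you note, (b) is only a labeling and (c) only guarantees nonnegativity of the left side), and it avoids the paper's pivot index $l$, whose existence the paper justifies via a slightly awkward chain $e_{t_1}\leq\alpha\leq\beta\leq e_{s_1}$. One small remark: the disjointness of the extremal sets $A=\{r-m+1,\dots,r\}$ and $B=\{1,\dots,m\}$ is only needed if you want the bound to be attained; for the inequality itself the two one-sided estimates suffice, so even the appeal to $2(j-k)\leq r$ at that point is dispensable.
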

\begin{proof}
As $e_{t_1} \leq \alpha \leq \beta \leq e_{s_1}$, we can define $l \in \{1, 2, \dots, j\}$ to be the largest number such that $e_{t_l} \leq e_{s_l}$. Therefore, we have 
\[e_{t_1} \leq e_{t_2} \leq \cdots e_{t_l} \leq e_{s_l} \leq \cdots \leq e_{s_2} \leq e_{s_1}.\]
It is easy to see that $e_{s_i} - e_{t_i} \leq e_{r-i+1} - e_i$ for all $1 \leq i \leq l$. If $l<j$, we have $e_{s_i} - e_{t_i} < 0 \leq e_{r-i+1} - e_i$ for all $l < i \leq j$. To conclude the proof, we just have to sum up the inequalities $e_{s_i} - e_{t_i} \leq e_{r-i+1} - e_i$ for all $1 \leq i \leq j$.
\end{proof}

\begin{proposition} \label{proposition:permutational}
Let $(M, \varphi)$ be a permutational $F$-crystal with Hodge slopes ${\bf e} = \{e_1 \leq e_2 \leq \cdots \leq e_r\}$. Then the following inequality holds
\begin{equation} \label{equation:inumpermutational}
n_M \leq \sum_{i=1}^{\lfloor r/2 \rfloor} (e_{r-i+1} - e_i).
\end{equation}
Furthermore, the inequality is optimal in the sense that for every choice of Hodge slopes $e_1, e_2, \dots, e_r$, there is a permutational $F$-crystal such that \eqref{equation:inumpermutational} is an equality.
\end{proposition}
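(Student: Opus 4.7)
The plan is to decompose $(M,\varphi)$ along the cycles of $\pi$, bound the isomorphism number of each cyclic summand by applying Lemma \ref{lemma:permutational} internally to the cycle, and paste the resulting bounds together using Proposition \ref{proposition:estimateldirectsum} combined with a second, global application of Lemma \ref{lemma:permutational}. Setting $N := \sum_{i=1}^{\lfloor r/2 \rfloor}(e_{r-i+1} - e_i)$, I write $(M,\varphi) = \bigoplus_{l=1}^t (M_l, \varphi_l)$ indexed by the cycles $C_l$ of $\pi$, where each $(M_l, \varphi_l)$ is cyclic of rank $r_l = |C_l|$ and therefore isoclinic quasi-special by Remark \ref{remark:permuisquasi}.

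For a single cyclic summand, Lemma \ref{lemma:computelquasispecial} gives $n_{M_l} = \max_{1 \le q \le r_l}\delta_{M_l}(q)$. Using $\varphi_l^{r_l}(v_i) = p^{s_l}v_i$ with $s_l := \sum_{j \in C_l}e_j$, I first derive the symmetry $\delta_{M_l}(q) = \delta_{M_l}(r_l-q)$, reducing the range to $q \le \lfloor r_l/2 \rfloor$. For such $q$, both $\beta_{M_l}(q)$ and $\alpha_{M_l}(q)$ are sums of $e$-values over $q$ distinct indices in $C_l$ (max, resp.\ min, over length-$q$ arcs in the cycle), so applying Lemma \ref{lemma:permutational} to the list of Hodge slopes indexed by $C_l$ (in their sorted order, with $j=q$) yields $\delta_{M_l}(q) \le \sum_{i \in T_l(q)}e_i - \sum_{i \in B_l(q)}e_i$, where $T_l(q), B_l(q) \subset C_l$ are the top and bottom $q$ indices of $C_l$ by Hodge slope. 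Since these local extremes are bounded by the global ones, this is at most $N$; thus $n_{M_l} \le N$.

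For the multi-cycle case ($t \ge 2$) I invoke Proposition \ref{proposition:estimateldirectsum}: $n_M \le \max\{1, n_{M_l}, n_{M_l} + n_{M_{l'}} - 1 : l \ne l'\}$. The diagonal terms are bounded by $N$, and $1 \le N$ unless all Hodge slopes coincide, in which case $(M,\varphi)$ is isoclinic ordinary and $n_M = 0$ by Proposition \ref{proposition:n=0}. For the cross term, I use the sharp cycle-local bound at $q = \lfloor r_l/2 \rfloor$, namely $n_{M_l} \le \sum_{i \in T_l}e_i - \sum_{i \in B_l}e_i$ (writing $T_l := T_l(\lfloor r_l/2 \rfloor)$, $B_l := B_l(\lfloor r_l/2 \rfloor)$), and similarly for $l'$. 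Since $C_l \cap C_{l'} = \emptyset$, this gives
\[ n_{M_l} + n_{M_{l'}} \le \sum_{i \in T_l \cup T_{l'}} e_i - \sum_{i \in B_l \cup B_{l'}} e_i, \]
where both unions are disjoint subsets of $\{1,\ldots,r\}$ of common cardinality $k := \lfloor r_l/2 \rfloor + \lfloor r_{l'}/2 \rfloor \le \lfloor r/2 \rfloor$. A second, global application of Lemma \ref{lemma:permutational} with $j = k$ then bounds the right-hand side by $\sum_{i=1}^k(e_{r-i+1} - e_i) \le N$, so $n_{M_l} + n_{M_{l'}} - 1 \le N$ as required.

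For optimality, I take $(M, \varphi)$ cyclic with $\pi(i) = i+1 \bmod r$ and the sorted Hodge slopes $e_1, \ldots, e_r$ placed at positions $1, \ldots, r$. The length-$\lfloor r/2 \rfloor$ arcs realizing $\alpha_M$ and $\beta_M$ are the bottom arc $\{1, \ldots, \lfloor r/2 \rfloor\}$ and the top arc $\{r-\lfloor r/2\rfloor+1, \ldots, r\}$, which gives $\delta_M(\lfloor r/2 \rfloor) = N$; since $(M,\varphi)$ is isoclinic, Theorem \ref{theorem:vasiu1} and Lemma \ref{lemma:computelquasispecial} imply $n_M = \ell_M = N$. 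The main obstacle will be the cross-cycle bound: one must track the \emph{local} top/bottom index sets $T_l, B_l$ inside each cycle rather than be content with the weaker inequality $n_{M_l} \le N$, because summing the weaker bound would give only $2N$; the sharper local form lets the disjointness of the cycles and a single global application of Lemma \ref{lemma:permutational} deliver the required estimate.
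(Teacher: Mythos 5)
Your proposal is correct and follows essentially the same route as the paper: reduce to cyclic summands via Lemma \ref{lemma:computelquasispecial} and the symmetry $\delta(q)=\delta(r_l-q)$, bound each $\delta_{M_l}(q)$ by a cycle-local application of Lemma \ref{lemma:permutational}, and handle the cross terms from Proposition \ref{proposition:estimateldirectsum} by combining the sharp local top/bottom bounds over disjoint cycles with one global application of Lemma \ref{lemma:permutational}, with the standard $r$-cycle furnishing optimality. Your explicit treatment of the degenerate case where all Hodge slopes coincide is a small but welcome addition.
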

\begin{proof}
We first prove the inequality for cyclic $F$-crystals. Let $\pi$ be a cycle such that $(M, \varphi) \cong (M, \varphi_{\pi, {\bf e}})$. Since every cyclic $F$-crystal is an isoclinic quasi-special $F$-crystal by the second part of Remark \ref{remark:permuisquasi}, the isomorphism number $n_M$ of $(M, \varphi)$ is $\max\{\delta_M(j) \; | \; j = 1, 2, \dots, r\}$ by Lemma \ref{lemma:computelquasispecial}. For each $j \in \{1, 2, \dots, r\}$,  the Hodge slopes of $(M, \varphi^j)$ are 
\[\sum_{i=0}^{j-1} e_{\pi^i(1)}, \quad \sum_{i=0}^{j-1} e_{\pi^i(2)}, \quad \dots, \quad \sum_{i=0}^{j-1} e_{\pi^i(r)}.\]
Then $\delta_M(j)$ is the difference between the maximum number, that is $\beta_M(j)$,  and the minimum number, that is $\alpha_M(j)$, from the above list. For each $j \in \{1, 2, \dots, r\}$, we claim that $\delta_M(j) = \delta_M(r-j)$. Indeed, this can be easily checked by observing that the Hodge slopes of $(M, \varphi^{r-j})$ are
\[s- \sum_{i=0}^{j-1} e_{\pi^i(1)}, \quad s- \sum_{i=0}^{j-1} e_{\pi^i(2)}, \quad \dots, \quad s- \sum_{i=0}^{j-1} e_{\pi^i(r)},\]
with $s = \sum_{i=1}^r e_i$. Therefore $n_M = \max\{\delta_M(j) \; | \; j = 1, 2, \dots, \lfloor r/2 \rfloor\}$.

Applying Lemma \ref{lemma:permutational} to $\beta = \beta_M(j)$ and $\alpha = \alpha_M(j)$, we have for all $j \in \{1, 2, \dots, \lfloor r/2 \rfloor\}$, 
\[\delta_M(j) \leq \sum_{i=1}^{\lfloor r/2 \rfloor} (e_{r-i+1} - e_i).\]
This proves the proposition for cyclic $F$-crystals. Let $\pi = (1, 2, \dots, r)$, so $n_M = \delta_M(\lfloor r/2 \rfloor ) =  \sum_{i=1}^{\lfloor r/2 \rfloor} (e_{r-i+1} - e_i)$. This shows that the inequality \eqref{equation:inumpermutational} can be an equality for any choice of Hodge slopes in the cyclic $F$-crystal case.

If  $(M, \varphi) \cong (M, \varphi_{\pi, {\bf e}})$ is a permutational $F$-crystal for some non-trivial permutation $\pi$, then $(M, \varphi)$ is a finite direct sum of (possibly) two or more cyclic $F$-crystals, say $(M, \varphi) \cong \bigoplus_{i \in I} (M_i ,\varphi_i)$. As $\pi$ is non-trivial, we know that $n_{M_i} \geq 1$ for some $i$.

Applying the (proved) conclusion of Proposition \ref{proposition:permutational} to the cyclic $F$-crystals $(M_i, \varphi_i)$, we deduce that
\[n_{M_i} \leq \sum_{l=1}^{\lfloor r_i/2 \rfloor} (e^{(i)}_{r_i-l+1} - e^{(i)}_l)\]
where $e^{(i)}_1 \leq e^{(i)}_2 \leq \dots \leq e^{(i)}_{r_i}$ are the Hodge slopes of $(M_i, \varphi_i)$ and $r_i$ are the rank of $M_i$ for all $i \in I$. Applying Lemma \ref{lemma:permutational} to $\beta = \sum_{l=1}^{\lfloor r_i/2 \rfloor} e_{r_i-l+1}^{(i)}$ and $\alpha = \sum_{l=1}^{\lfloor r_i/2 \rfloor} e_l^{(i)}$, we have \[n_{M_i} \leq \beta - \alpha \leq \sum_{l=1}^{\lfloor r_i/2 \rfloor}(e_{r-l+1}-e_l) \leq \sum_{l=1}^{\lfloor r/2 \rfloor}(e_{r-l+1} - e_l).\]
Proposition \ref{proposition:estimateldirectsum} implies that $n_M \leq \max\{1, n_{M_i}, n_{M_i}+n_{M_j}-1 \; | \; i,j \in I, i \neq j \},$ so to prove Proposition \ref{proposition:permutational} in general, it suffices to show that
\begin{equation} \label{inequality:lasttoprove}
n_{M_i} + n_{M_j} - 1 < \sum_{l=1}^{\lfloor r/2 \rfloor}(e_{r-l+1} - e_l).
\end{equation}
For $i, j \in I$, $i \neq j$, we compute that
\begin{equation} \label{equation:inequalitypermutational}
n_{M_i} + n_{M_j} - 1 < (\sum_{l=1}^{\lfloor r_i/2 \rfloor} e^{(i)}_{r_i-l+1} +\sum_{l=1}^{\lfloor r_j/2 \rfloor} e^{(j)}_{r_j-l+1}) - (\sum_{l=1}^{\lfloor r_i/2 \rfloor} e^{(i)}_{l} +\sum_{l=1}^{\lfloor r_j/2 \rfloor} e^{(j)}_{l}).
\end{equation}
By Lemma \ref{lemma:permutational}, letting
\[\beta = \sum_{l=1}^{\lfloor r_i/2 \rfloor} e^{(i)}_{r_i-l+1} +\sum_{l=1}^{\lfloor r_j/2 \rfloor} e^{(j)}_{r_j-l+1} \quad \textrm{and} \quad \alpha = \sum_{l=1}^{\lfloor r_i/2 \rfloor} e^{(i)}_{l} +\sum_{l=1}^{\lfloor r_j/2 \rfloor} e^{(j)}_{l},\]
we have
\begin{equation} \label{inequality:lastlastprove}
\beta - \alpha \leq \sum_{l=1}^{\lfloor r_i/2 \rfloor + \lfloor r_j/2 \rfloor} (e_{s_l} - e_{t_l}) \leq \sum_{l=1}^{\lfloor r/2 \rfloor}(e_{r-l+1} - e_l).
\end{equation}
The last inequality is true because $\lfloor r_i/2 \rfloor + \lfloor r_j/2 \rfloor \leq \lfloor r/2 \rfloor$. Now \eqref{inequality:lasttoprove} is clear by \eqref{equation:inequalitypermutational} and \eqref{inequality:lastlastprove}, which completes the proof of Proposition \ref{proposition:permutational}.
\end{proof}

\begin{remark}
If $(M, \varphi)$ is a direct sum of two or more cyclic $F$-crystals, then $n_M < \sum_{l=1}^{\lfloor r/2 \rfloor} (e_{r-l+1}-e_l)$.
\end{remark}

\section*{References}
\bibliography{reference}

\end{document}